\definecolor{labelkey}{gray}{.8}
\definecolor{refkey}{gray}{.8}
\definecolor{darkred}{rgb}{0.9,0.1,0.1}
\definecolor{darkgreen}{rgb}{0,0.5,0}
\newtheorem{theorem}{Theorem}[section]
\newtheorem{lemma}[theorem]{Lemma}
\newtheorem{corollary}[theorem]{Corollary}
\newtheorem{proposition}[theorem]{Proposition}
\theoremstyle{remark}
\newtheorem{remark}[theorem]{Remark}
\renewenvironment{proof}[1][Proof]{ {\itshape \noindent {#1.}} }{$\Box$
\medskip}
\numberwithin{equation}{section}
\newcommand{\R}{\mathbb{R}}
\newcommand{\Z}{\mathbb{Z}}
\newcommand{\N}{\mathbb{N}}
\newcommand{\Pb}{\mathbb{P}}
\newcommand{\E}{\mathbb{E}}
\newcommand{\F}{\mathcal{F}}
\newcommand{\cF}{\mathscr{F}}
\newcommand{\cG}{\mathscr{G}}
\newcommand{\M}{\mathbf{M}}
\newcommand{\A}{\mathcal{A}}
\newcommand{\C}{\mathcal{C}}
\newcommand{\I}{\mathcal{I}}
\newcommand{\J}{\mathcal{J}}
\newcommand{\cU}{\mathscr{U}}
\newcommand{\eps}{\varepsilon}
\def\les{\lesssim}
\newcommand{\Var}{\mathrm{Var}}
\newcommand{\la}{\langle}
\newcommand{\ra}{\rangle}
\newcommand{\X}{\mathbf{X}}
\newcommand{\cX}{\mathcal{X}}
\newcommand{\Y}{\mathbf{Y}}
\newcommand{\bfa}{\boldsymbol a}
\newcommand{\bfpi}{\boldsymbol\pi}
\newcommand{\what}{\widehat}
\newcommand{\bfx}{\boldsymbol x}
\newcommand{\bfs}{\boldsymbol s}
\def\blue{\textcolor{blue}}
\newcommand{\farc}{\frac}
\begin{document}

\title{The Edwards-Wilkinson limit
of the  random heat equation in dimensions three and higher}

\author{Yu Gu, Lenya Ryzhik, Ofer Zeitouni}
\date{}

\maketitle

\begin{abstract}
We consider  the heat equation with a 
\textit{multiplicative}
Gaussian potential in dimensions $d\geq 3$. We show that the renormalized solution
converges to the solution of a deterministic diffusion equation 
with an effective diffusivity.
We also prove that the
renormalized
large scale random fluctuations
are
described by the Edwards-Wilkinson model,  that is,
the stochastic heat equation (SHE) with \textit{additive}
white noise, with an effective variance.
\end{abstract}

\section{Introduction  }

We consider  the  solutions to the heat equation with a smooth Gaussian 
random potential:
\begin{equation}\label{e.maineq}
\partial_t u=\frac12\Delta u+\lambda V(t,x)u,  \    \   x\in\R^d, d\geq 3.
\end{equation}
Here, $\lambda>0$ is a constant, and the random potential $V(t,x)$ 
is a mean-zero Gaussian field
that we assume to be of the form 
\[
V(t,x)=\int_{\R^{d+1}}\phi(t-s)\psi(x-y)dW(s,y),
\]
where  $dW(s,y)$ is a
space-time white noise built on a probability 
space $(\Sigma,\F,\Pb)$.
We assume that
the non-negative functions $\phi,\psi\in \C_c^\infty$, that
$\phi$ is supported on $[0,1]$, and that
$\psi$ is even and supported on~$\{x: |x|\leq 1/2\}$.  The covariance function of $V$ is
\begin{equation}\label{sep1414}
R(t,x)=\E[V(0,0)V(t,x)]=\int_{\R}\phi(t{+}s)\phi(s)ds\int_{\R^d}\psi(x{+}y)\psi(y)dy.
\end{equation}
Here, $\E$ denotes the expectation on $\Sigma$. The above assumptions on the correlation function $R(t,x)$ are made
mostly to simplify the notation, and the only essential technical assumptions are that $R(t,x)$ is compactly supported in $t$ and is rapidly decaying
in $x$.

As we are interested in the large scale and long time asymptotics of $u(t,x)$,
we consider the rescaled function  
\[
u_\eps(t,x):=u(\frac{t}{\eps^2},\frac{x}{\eps}),
\]
with $\eps\ll 1$. The function $u_\eps$ 
satisfies 
\begin{equation}\label{e.maineq1}
\partial_t u_\eps=\frac12\Delta u_\eps+ \frac{\lambda}{\eps^2}V(\frac{t}{\eps^2},\frac{x}{\eps})u_\eps.
\end{equation}
We assume that
the initial condition $u_\eps(0,x)=u_0(x)\in \C_b(\R^d)$. 
Throughout the paper, we stay in the weak disorder regime and assume 
that $\lambda\in(0,\lambda_0)$, 
with  a small but fixed constant $\lambda_0$ only 
depending on $d$, $\phi$ and $\psi$. 
Our main result is as follows.
\begin{theorem}\label{t.mainth}
There exist $c_1,c_2$ depending on $\lambda,\phi$, and $\psi$ such 
that for any $t>0$ and $g\in \C_c^\infty(\R^d)$, we have
\begin{equation}\label{e.conmean}
\int_{\R^d}u_\eps(t,x)\exp\Big\{-\farc{c_1t}{\eps^2}-c_2\Big\}g(x)dx\to 
\int_{\R^d}\bar{u}(t,x)g(x)dx,~~\hbox{ as $\eps\to0$,}
\end{equation}
in probability, and 
\begin{equation}\label{e.mainthCLT}
\frac{1}{\eps^{{d}/{2}-1}}\int_{\R^d}(u_\eps(t,x)-\E[u_\eps(t,x)])\exp\Big\{-\farc{c_1t}{\eps^2}-c_2\Big\}
g(x)dx\Rightarrow \int_{\R^d} \cU(t,x)g(x)dx
\end{equation}
in distribution. Here, $\bar{u}$ is the solution of the effective
heat equation 
 \begin{equation}\label{e.heathom}
\partial_t \bar{u}=\frac12\nabla\cdot \bfa_{\mathrm{eff}}\nabla\bar{u},  \  \  \bar{u}(0,x)=u_0(x),
\end{equation}
with the 
effective diffusion matrix $\bfa_{\mathrm{eff}}\in\R^{d\times d}_{\mathrm{sym}}$ 
  defined in \eqref{e.defaeff} below,
and $\cU$ solves the additive stochastic heat equation
\begin{equation}\label{e.ewshe}
\partial_t \cU=\frac12\nabla\cdot \bfa_{\mathrm{eff}}\nabla \cU+\lambda\nu_{\mathrm{eff}}\bar{u}\dot{W}, \   \  \cU(0,x)=0,
\end{equation}
with the
effective variance $\nu_{\mathrm{eff}}^2>0$ defined in \eqref{e.defnu} below.
\end{theorem}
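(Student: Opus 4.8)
The plan is to run everything through the Feynman--Kac formula. Writing $T=t/\eps^2$ and letting $B$ be a Brownian motion on $\R^d$ under an expectation $\E_B$ independent of $W$, one has
\[
u_\eps(t,x)=\E_B\!\left[u_0\!\big(x+\eps B_T\big)\exp\!\Big(\lambda\int_0^{T}V\big(T-r,\tfrac x\eps+B_r\big)\,dr\Big)\right],
\]
and conditionally on $B$ the exponent is Gaussian, so the bracketed object is a continuous directed polymer partition function at inverse temperature $\lambda$ run for a long time $T\to\infty$. All of the analysis is carried out at this level, using two facts about the weakly disordered polymer in $d\ge 3$ (valid for $\lambda<\lambda_0$): the renormalized partition function converges, and the polymer path obeys an invariance principle with limiting covariance $\bfa_{\mathrm{eff}}$; I treat these as available. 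For \eqref{e.conmean}, taking $\E$ turns $\E[u_\eps(t,x)]$ into $\E_B[u_0(x+\eps B_T)\exp(\tfrac{\lambda^2}{2}\iint_{[0,T]^2}R(r-r',B_r-B_{r'})\,dr\,dr')]$; this forces $c_1=\lim_T\tfrac1T\log\E_B[\exp(\tfrac{\lambda^2}{2}\iint R)]$ (the annealed free energy) and $c_2=\lim_T(\log\E_B[\exp(\tfrac{\lambda^2}{2}\iint R)]-c_1T)$, both finite for $\lambda<\lambda_0$. Dividing by $\E_B[\exp(\tfrac{\lambda^2}2\iint R)]$ leaves the expectation of $u_0(x+\eps B_T)$ under the polymer measure, and the diffusive scaling gives $\E[u_\eps(t,x)]\,e^{-c_1t/\eps^2-c_2}\to\bar u(t,x)$, the solution of \eqref{e.heathom}, uniformly on compacts; together with the variance bound below this proves \eqref{e.conmean}.

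For \eqref{e.mainthCLT} I would use a martingale representation in $W$: with $(\F_r)$ the filtration generated by $W$, the Clark--Ocone formula gives $u_\eps(t,x)-\E[u_\eps(t,x)]=\int\!\!\int h^x_\eps(r,y)\,W(dr,dy)$, $h^x_\eps(r,y)=\E[D_{r,y}u_\eps(t,x)\mid\F_r]$, where the Malliavin derivative of the Feynman--Kac exponential is explicit and $D_{r,y}$ produces the factor $\lambda\int_0^T\phi(T{-}s{-}r)\psi(\tfrac x\eps{+}B_s{-}y)\,ds$, which localizes the polymer near the space--time point $(T-r,\,y)$. Hence $X_\eps:=\eps^{1-d/2}e^{-c_1t/\eps^2-c_2}\int g(x)(u_\eps(t,x)-\E u_\eps(t,x))\,dx=N_{\mathrm{fin}}$, the terminal value of the continuous $L^2$ martingale $N_\rho=\eps^{1-d/2}e^{-c_1t/\eps^2-c_2}\int_{-1}^{\rho}\!\!\int\big(\int g(x)h^x_\eps(r,y)\,dx\big)W(dr,dy)$. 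For continuous martingales the CLT reduces to convergence in probability of the bracket $\langle N\rangle_{\mathrm{fin}}$ to a deterministic constant, and since $\E[\langle N\rangle_{\mathrm{fin}}]=\Var(X_\eps)$ it is enough to (a) identify $\lim\Var(X_\eps)$ and (b) show $\Var(\langle N\rangle_{\mathrm{fin}})\to 0$.

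Step (a) is a two-polymer computation. Writing $\mathrm{Cov}(u_\eps(t,x),u_\eps(t,x'))=\E_{B,B'}\big[u_0(x+\eps B_T)u_0(x'+\eps B'_T)\,e^{\frac{\lambda^2}{2}(Q_{BB}+Q_{B'B'})}(e^{\lambda^2 Q_{BB'}}-1)\big]$ with $Q_{BB'}=\iint_{[0,T]^2}R(r-r',\tfrac{x-x'}\eps+B_r-B'_{r'})\,dr\,dr'$, the cross interaction $Q_{BB'}$ is transient in $d\ge3$, so it essentially forces the two polymers to meet once, at a single intermediate space--time point; after absorbing the self-interactions into the constant $\nu_{\mathrm{eff}}$ ($>0$ for $\lambda<\lambda_0$) one obtains
\[
\mathrm{Cov}\big(u_\eps(t,x),u_\eps(t,x')\big)\;\sim\;\eps^{d-2}\,e^{2c_1t/\eps^2+2c_2}\,\lambda^2\nu_{\mathrm{eff}}^2\int_0^t\!\!\int_{\R^d}\bar G_{t-r}(x-z)\,\bar G_{t-r}(x'-z)\,\bar u(r,z)^2\,dz\,dr,
\]
with $\bar G$ the heat kernel of $\tfrac12\nabla\cdot\bfa_{\mathrm{eff}}\nabla$; the $\eps^{d-2}$ cancels the $\eps^{2-d}$ prefactor, so $\Var(X_\eps)\to\lambda^2\nu_{\mathrm{eff}}^2\int_0^t\!\int(\int g(x)\bar G_{t-r}(x-z)\,dx)^2\bar u(r,z)^2\,dz\,dr=\Var(\int g(x)\cU(t,x)\,dx)$ for $\cU$ solving \eqref{e.ewshe}. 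Step (b) is the main obstacle: a four-polymer estimate in which, after subtracting the mean, the dominant configurations (two disjoint interacting pairs) cancel, while every configuration with a triple-or-higher encounter of the four polymers carries a strictly larger power of $\eps$ by transience in $d\ge 3$, and the smallness of $\lambda$ keeps the resulting perturbative series summable; this forces $\Var(\langle N\rangle_{\mathrm{fin}})\to 0$. With (a) and (b), the martingale CLT yields $X_\eps\Rightarrow \cN(0,\Var(\int g\,\cU(t,\cdot)))$, which is \eqref{e.mainthCLT}. (A moment method --- computing all moments of $X_\eps$ with $k$ polymers and verifying Wick's rule via the same transience bookkeeping --- would be an alternative to the martingale route.)
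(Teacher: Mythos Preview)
Your foundation matches the paper: both start from the Feynman--Kac representation and apply the Clark--Ocone formula to write the fluctuation as a stochastic integral $\int Z_t^\eps(r,y)\,dW(r,y)$, and both identify $c_1,c_2$ through the asymptotics $\zeta_T=c_1T+c_2+o(1)$ of the log-partition function. The identification of the limiting variance via a two-polymer computation (your step (a)) is also the same in spirit, though the paper carries it out through an explicit formula for $\E\int|Z_t^\eps|^2$ (their Lemma~2.2) and a Markov-chain analysis of the tilted path rather than the heuristic ``meet once and separate'' picture.

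Where you diverge is in the CLT mechanism, and this is where your main gap lies. You propose to invoke a martingale CLT by proving $\langle N\rangle_{\mathrm{fin}}\to\sigma^2$ in probability, which forces you into step (b): a four-polymer cancellation argument to show $\Var(\langle N\rangle_{\mathrm{fin}})\to0$. Your sketch (``disjoint pairs cancel, triple encounters carry extra powers of $\eps$'') is plausible heuristics but not a proof; making it rigorous would require controlling correlations of $|Z_t^\eps(r,y)|^2$ across all $(r,y)$, and the integrand $Z_t^\eps(r,y)$ depends on the \emph{entire} past of $W$ up to $r$, so there is no a priori decorrelation. The paper avoids this altogether: it partitions $[-1,t/\eps^2]$ into alternating long and short blocks, and on each long block replaces $Z_t^\eps$ by a modification $\tilde Z_t^\eps$ that depends only on $dW$ in a window of length $\eps^{-\alpha}$ around $r$. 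This makes the block integrals $\cX_j^\eps$ genuinely \emph{independent}, so one can apply the classical Lindeberg CLT; the only fourth-moment input needed is a crude bound $\sum_j\sqrt{\E|\cX_j^\eps|^4}\lesssim1$, which requires no cancellation. The cost is two approximation lemmas (modification error and short-block contribution both vanish), but these reduce to the same second-moment machinery as step (a).

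A second gap: you treat the polymer invariance principle and the convergence of the renormalized partition function as ``available''. In this continuous, non-white-in-time setting the paper actually builds these from scratch in Section~4, by constructing a Markov chain on $\C([0,1])$ from the length-one path increments (exploiting that $R(s,\cdot)=0$ for $|s|\ge1$), verifying a Doeblin condition, and using the resulting regeneration structure both for the invariance principle and for the exponential-tail bound on the two-path intersection time that underlies the finiteness of $\nu_{\mathrm{eff}}^2$. Without this machinery your steps (a) and (b) have no engine.
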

The renormalization constants $c_1$ and $c_2$ are identified in 
  (\ref{sep2206}) below.

\subsection{ {Background} and related problems}

The study of singular stochastic PDEs has witnessed important progress in recent years, with different approaches developed to make sense of 
equations which are genuinely ill-posed due to the lack 
of regularity and the need to make sense of the  multiplication
of distributions \cite{gubinelli2015paracontrolled,hairer2013solving,hairer2014theory,kupiainen2016renormalization,otto2016quasilinear}. 
The existing works typically prove that the solution of the
equation with the 
mollified white noise, after a suitable renormalization, 
converges to some limit 
that is independent of the way in which
the noise is mollified. 
 
Here, we consider a slightly different situation: 
the rescaled  random field in \eqref{e.maineq1} 
is not a mollification of the white
noise, and does not directly converge 
to the white noise in~$d\geq 3$ as $\eps\to0 $. We rather have, formally,
\[
\farc{1}{\eps^2}V\big(\farc{t}{\eps^2},\farc{x}{\eps}\big)\sim \eps^{d/2-1}\nu_0\dot W(t,x),
\]
with 
\begin{equation}\label{sep2402}
\nu_0^2=\int_{\R^{d+1}} R(s,y)dsdy.
\end{equation}
Hence, one could think that the noise in (\ref{e.maineq1}) is small and would not produce a non-trivial effect on the solutions, so that the
limit would be simply the unperturbed heat equation. This is problematic -- if we formally replace the random potential 
in (\ref{e.maineq1}) by $\eps^{d/2-1}\dot W(t,x)$, we obtain the multiplicative stochastic heat equation. Giving a meaning to its 
solutions in~$d\ge 3$ brings about the aforementioned question of making sense of multiplying two distributions~$u$ and~$\dot W$.
Hence, the issue of the limit is much more delicate.
Theorem~\ref{t.mainth} 
shows that even though the random potential in (\ref{e.maineq1}) formally converges to zero, it still affects the solutions in a non-trivial way:  
(i)~on the level of the
law of large numbers, the solution of (\ref{e.maineq1}) converges to a solution of the 
deterministic diffusion equation  \eqref{e.heathom}, 
with an effective diffusivity that is 
modified by the presence of the noise, and (ii) on the level of the
central limit theorem, 
the random fluctuations, after a rescaling, fall into the Edwards-Wilkinson universality class in~$d\geq 3$, as in \eqref{e.ewshe},
with an effective (and not a ``naive-guess'' $\nu_0$) variance.  
We stress that   both the diffusion matrix and the variance 
of the noise are homogenized in the limit.

 We mention two related problems. The weak coupling regime   
analyzed in~\cite{gu2015homogenization}
concerns the situation when the potential in (\ref{e.maineq})
is asymptotically small:
\begin{equation}\label{sep1302}
\partial_t u=\frac12\Delta u+\eps V(t,x)u,  \    \   x\in\R^d, d\geq 3.
\end{equation}
It was shown that no renormalization is required:
the diffusively rescaled solution 
\[
  u_\eps(t,x)=u(t/\eps^2,x/\eps)e^{-V_{\mathrm{eff}}t}
\]
converges in probability to the solution of the diffusion equation
\begin{equation}\label{sep1402}
\partial_t \bar{u}=\frac12\Delta\bar{u},  \  \  \bar{u}(0,x)=u_0(x),
\end{equation}
with an un-modified diffusivity. 
The effective potential $V_{\mathrm{eff}}$
is explicit:
\[
V_{\mathrm{eff}}=\int_0^\infty \E_B[R(t,B_t)]dt.
\]
As far as {fluctuations are  concerned}, using a simpler version of what is done in the present paper, one can show 
that for any $t>0$ and $g\in \C_c^\infty(\R^d)$ we have, as $\eps\to0$:
\begin{equation}
\frac{1}{\eps^{{d}/{2}}}\int_{\R^d}(u_\eps(t,x)-\E[u_\eps(t,x)])e^{-V_{\mathrm{eff}}t}g(x)dx\Rightarrow \int_{\R^d} \cU(t,x)g(x)dx
\end{equation}
in distribution. Here, $\cU$ solves the stochastic heat equation with 
additive space-time white noise 
\begin{equation}\label{sep1304}
\partial_t \cU=\frac12\Delta \cU+\nu_0
\bar{u}\dot{W}, \   \  \cU(0,x)=0.
\end{equation}
%
%
Note that neither the diffusivity nor the variance of the noise in (\ref{sep1304}) are homogenized in the weak coupling regime.
Indeed, equations (\ref{sep1402}) and (\ref{sep1304}) are precisely the ``naive guesses'' for the leading order equation and its
approximation that fail in our case, when the potential is not weak -- it has no pre-factor $\eps$ in (\ref{e.maineq})
unlike in (\ref{sep1302}).

The case when $V$ is white in time but not in space
was considered in \cite{mukherjee2016weak}: 
\[
V(t,x)=\dot{W}_\psi(t,x)=
\int\psi(x-y) dW(t,y).
\]
Equation \eqref{e.maineq} is interpreted in \cite{mukherjee2016weak} in the It\^o sense:
\begin{equation}\label{sep1702}
\partial_t u=\frac12\Delta u+\lambda \dot{W}_\psi(t,x)u,   \  \  x\in\R^d, d\geq 3.
\end{equation}
It was shown in \cite[Theorem 2.1]{mukherjee2016weak} 
that there exists $\lambda_1>0$ so that if
$\lambda\in(0,\lambda_1)$, 
then the rescaled solution $u_\eps(t,x)=u(t/\eps^2,x/\eps)$
satisfies 
\[
\int_{\R^d} u_\eps(t,x)g(x)dx\to\int_{\R^d} \bar{u}(t,x)g(x)dx
\]
in probability for any 
$g\in\C_c^\infty(\R^d)$. Here, $\bar{u}$ solves the heat equation 
\[
\partial_t \bar{u}=\frac12\Delta \bar{u}, \   \  \bar{u}(0,x)=u_0(x),
\]
with an un-modified diffusivity.
The same approach as in the present paper  gives in that case
\begin{theorem}\label{t.white}
  There exists   {$\lambda_1=\lambda_1(\psi)$}
    so that for all
$0<\lambda<\lambda_1$ we have 
\begin{equation}
\frac{1}{\eps^{{d}/{2}-1}}\int_{\R^d}(u_\eps(t,x)-\E[u_\eps(t,x)])g(x)dx\Rightarrow \int_{\R^d} \cU(t,x)g(x)dx,
\end{equation}
 {as $\eps\to0$},
with $\cU$ solving 
\begin{equation}
\partial_t \cU=\frac12\Delta \cU+\lambda \nu_{\mathrm{eff}} \bar{u}\dot{W}, \   \  \cU(0,x)=0,
\end{equation}
and 
\[
\nu_{\mathrm{eff}}^2=\int_{\R^d} R_\psi(x)\E_B\Big[\exp\Big\{\frac12\lambda^2\int_0^\infty R_\psi(x+B_s)ds\Big\}\Big]dx.
\]
\end{theorem}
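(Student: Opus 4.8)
The plan is to treat Theorem~\ref{t.white} as the white-in-time analogue of Theorem~\ref{t.mainth}, and to reuse the machinery developed for the latter with the simplifications afforded by the It\^o structure. The starting point is the Feynman--Kac representation: writing the solution of \eqref{sep1702} via a Brownian motion $B$ under the measure $\PP_x$, one has $u(t,x)=\E_B[u_0(B_t)\exp\{\lambda\int_0^t \dot W_\psi(t-s,B_s)\,ds\}]$ interpreted as an It\^o--Wiener chaos expansion; because the noise is white in time there is no time-renormalization needed, which is why the exponential prefactors $e^{c_1 t/\eps^2}$ and $e^{c_2}$ that appear in \eqref{e.conmean}--\eqref{e.mainthCLT} are absent here. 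The first step is therefore to record the chaos expansion of $u_\eps(t,x)$ and of $\E[u_\eps(t,x)]$ and to identify $v_\eps(t,x):=\eps^{-d/2+1}\bigl(u_\eps(t,x)-\E[u_\eps(t,x)]\bigr)$ as a sum over Wiener chaoses of order $\geq 1$.

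Next I would set up the test-function pairing $X_\eps(t):=\int_{\R^d} v_\eps(t,x)g(x)\,dx$ and compute its limiting covariance structure. The key computation is the second moment: $\E[X_\eps(t)X_\eps(t')]$ is expressed, through the It\^o isometry applied to the chaos expansion, as a space-time integral involving two Brownian bridges (or heat kernels) and the correlation kernel $R_\psi$. Here one uses the diffusive rescaling together with the fact that $R_\psi$ is integrable to extract the leading $\eps^{d-2}$ behaviour, matched by the prefactor $\eps^{-(d-2)}$, and to see a martingale/exponential-moment factor $\E_B[\exp\{\tfrac12\lambda^2\int_0^\infty R_\psi(x+B_s)\,ds\}]$ emerge from resumming the higher-order self-interaction terms; this is precisely the source of the effective variance $\nu_{\mathrm{eff}}^2=\int R_\psi(x)\,\E_B[\exp\{\tfrac12\lambda^2\int_0^\infty R_\psi(x+B_s)\,ds\}]\,dx$. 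One must check that $\lambda<\lambda_1(\psi)$ guarantees the finiteness of this exponential moment (a Kac-type condition on $\tfrac12\lambda^2\|R_\psi\|$ via the Brownian Green's function), which is where the smallness of $\lambda$ enters. Once the limiting covariance is identified, one recognizes it as the covariance of $\int g(x)\cU(t,x)\,dx$ with $\cU$ solving the additive SHE with noise strength $\lambda\nu_{\mathrm{eff}}\bar u$, since $\bar u$ here solves the unmodified heat equation (no diffusivity homogenization, as the in-time white noise does not renormalize the diffusion matrix).

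The third step is to upgrade convergence of moments to convergence in distribution. For this I would argue chaos-by-chaos: the first-order chaos term, being a Wiener integral, is exactly Gaussian, and one shows it already carries the full limiting variance, so that all higher-order chaoses vanish in $L^2$ as $\eps\to 0$. Together with a uniform $L^2$ bound (again using $\lambda<\lambda_1$) to justify truncation of the chaos series, this gives $X_\eps(t)\Rightarrow \mathcal N(0,\sigma^2(t))$; joint convergence at several times, and hence identification of the limit process as the stated SHE, follows from the analogous bilinear computation. The main obstacle I anticipate is the second step: controlling the resummation of the higher chaoses into the exponential-moment factor uniformly in $\eps$, i.e.\ showing that the ``ladder'' diagrams converge to $\E_B[\exp\{\tfrac12\lambda^2\int_0^\infty R_\psi(x+B_s)\,ds\}]$ with an error that is genuinely lower order — this requires the local-time/Green's-function estimates in $d\geq 3$ and is where the bulk of the technical work lies; everything else is a routine adaptation of the argument for Theorem~\ref{t.mainth}, in fact simpler because the It\^o interpretation removes the diagonal time-correlations that force the renormalization constants $c_1,c_2$ in the main theorem.
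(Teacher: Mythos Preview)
The paper does not give a separate proof of Theorem~\ref{t.white}; it only asserts that ``the same approach as in the present paper'' carries over. That approach is: Clark--Ocone representation of the fluctuation as a stochastic integral $\int Z_t^\eps(r,y)\,dW(r,y)$, modification of the integrand so that it depends only on $dW(s,\cdot)$ for $s$ in a short window near $r$, blocking into independent pieces $\cX_j^\eps$, and a Lindeberg CLT. In the white-in-time case the tilted Brownian measure collapses to ordinary Wiener measure (no exponential weight, hence no $\zeta_t$, no $c_1,c_2$, and unmodified diffusivity), and the variance identification reduces to the Kac moment computation you sketch in step~2.

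Your step~2 is correct in spirit, but step~3 contains a genuine error that contradicts your own step~2. You claim the first Wiener chaos ``already carries the full limiting variance'' and ``all higher-order chaoses vanish in $L^2$''. This is false. The first chaos of $\eps^{-(d/2-1)}(u_\eps-\E u_\eps)$ has limiting variance proportional to the naive constant $\nu_0^2=\int R_\psi(x)\,dx$, not to $\nu_{\mathrm{eff}}^2$; the gap
\[
\nu_{\mathrm{eff}}^2-\nu_0^2=\int_{\R^d} R_\psi(x)\Bigl(\E_B\bigl[e^{\frac12\lambda^2\int_0^\infty R_\psi(x+B_s)\,ds}\bigr]-1\Bigr)dx>0
\]
is precisely the contribution of the chaoses of order $\geq 2$, which therefore do \emph{not} vanish in the limit. (This is exactly the distinction the introduction draws between the weak-coupling regime~\eqref{sep1302}, where the naive $\nu_0$ appears, and the present $O(1)$-coupling regime, where the variance is homogenized.) The Gaussianity of the limit is not obtained by reduction to the first chaos; it comes from a CLT in the \emph{time} variable: the Clark--Ocone integrand localizes in $r$, so the block integrals are independent and Lindeberg applies, and this packages all chaos orders at once. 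If you insist on a chaos-by-chaos route, you would need something like the Fourth Moment Theorem at each fixed order plus uniform-in-order tail bounds to sum, which is substantially more work than what you describe and is not the mechanism the paper uses.
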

In this case, only the variance of the noise is homogenized 
but not the diffusivity.
Thus, both  
these regimes also lead to an Edwards-Wilkinson limit, 
with an un-modified diffusivity, 
and with either a ``naive-guess'' noise variance (the weak coupling case), 
or a homogenized noise variance (in the white
in time case),
whereas 
\eqref{e.maineq1} leads to both homogenized diffusivity and variance.

We  
mention  the very recent paper \cite{mukherjee2017central} 
that considers essentially the same {setup} 
as in the present paper. The main result of \cite{mukherjee2017central}  
implies (\ref{e.conmean}) except that
the convergence is established 
for the averages and not in probability, 
and the renormalization  in the exponent 
is less explicit than in (\ref{e.conmean}).

 In dimensions $d=1,2$, similar problems have been discussed
in the literature. 
For the random PDE \eqref{e.maineq1}, with $\lambda=\lambda(\eps)\to0$ 
chosen appropriately, and after a possible renormalization, 
the solution~$u_\eps$  converges to the solution to the 
stochastic heat equation with 
multiplicative space-time white noise in $d=1$ 
\cite{chandra2017moment,gu2018another,hairer2015multiplicative,hairer2015wong}, and a Gaussian field in $d=2$ within the 
weak-disorder regime~\cite{caravenna2015universality,feng2016rescaled}. 
For random polymers and interacting particle systems, 
the partition function or the height function plays the role of 
the solution to certain ``PDE'', and their convergences to the 
SHE/KPZ equation have been proved in $d=1$ e.g. 
in \cite{alberts2014intermediate,amir2011probability,bertini1997stochastic}.

 {We} 
comment briefly on the strategy of the proof.
The Feynman-Kac representation expresses the solution to the random PDE in the form of a partition function of a directed polymer in 
a random environment, and the appearance of the effective diffusivity in the limit can be interpreted as the convergence of a
diffusively rescaled polymer path converging
to a Brownian motion in $d\geq 3$, see 
the results in~\cite{betz2005central,gubinelli2006gibbs,mukherjee2017central} 
for the annealed continuous setting and 
\cite{bolthausen1989note,imbrie1988diffusion} for
the quenched discrete setting. 
By a construction similar to \cite{mukherjee2017central}, 
we utilize the finite range in time correlation of~$V(t,x)$ to 
decompose the polymer path into length-one increments and 
establish a Markovian dynamics in the space of path increments. 
 The
latter Markov chain satisfies the Doeblin condition,  greatly 
simplifying the analysis. 
The proof of the Edwards-Wilkinson limit for the
fluctuations relies on the Clark-Ocone formula which 
expresses the random fluctuation in terms of a stochastic integral, and 
the fact that $u_\eps(t,x)$ essentially only depends 
on  {$dW(s,x)$} locally around $s=t/\eps^2$. 

It may be possible to apply a PDE approach,  such as using 
the correctors 
in the standard homogenization theory, to identity the limit and 
prove the convergence. However, the particular scaling considered here 
requires the construction
of infinitely many correctors.  Controlling these correctors becomes
increasingly more difficult  as their order increases. 
Therefore, we find the probabilistic methods more convenient to use here. 

Finally, we comment on our assumption of $\lambda\ll1$. We choose the disorder to be weak enough so that the $L^2(\Omega)$ norm of the (rescaled) solution is uniformly bounded in $\eps$, or equivalently, the corresponding random polymer is in the $L^2$ regime \cite[Chapter 3]{comets2}. As we increase $\lambda$ to enter the strong disorder regime, localization type of behaviors of the random PDE/polymer will appear which is beyond the scope of the paper. It is worth mentioning that there are different notions of the critical temperature which separates the weak and strong disorder regimes, see \cite[page 27, Theorem 2.4]{comets2} and \cite[page 34, Proposition 3.1]{comets2}. For our interest in the fluctuations of the random PDE, the critical $\lambda_c$ is the one beyond which the effective variance $\nu_{\mathrm{eff}}^2$ becomes infinite. We also mention that in the context of weak disorder polymer, a pointwise version of \eqref{e.mainthCLT} was obtained in \cite{comets1}.

\subsection{Connections to the KPZ equation}

The recent work \cite{magnen2017diffusive}, which employs completely
different methods,
is closely related to ours. It
considers the KPZ equation,  related to \eqref{e.maineq1}  by
a Cole-Hopf transformation. 
The setup and result are close but not exactly the same as here and we
discuss below the connection.

Starting from \eqref{e.maineq}, applying the centered
Cole-Hopf transformation 
\[
h(t,x)=\lambda^{-1}\log u(t,x)-c_0t,
\]
one obtains
\begin{equation}\label{sep1310}
\partial_t h=\frac12\Delta h+\frac12\lambda |\nabla h|^2+V(t,x)-c_0,  \   \  x\in\R^d, d\geq 3,
\end{equation}
with a constant $c_0$. Define
\begin{equation}
h_\eps(t,x):=\eps^{-{d}/{2}+1}h(\frac{t}{\eps^2},\frac{x}{\eps}),
\end{equation}
which satisfies
\begin{equation}
\partial_t h_\eps=\frac12\Delta h_\eps+\frac12\lambda\eps^{{d}/{2}-1} |\nabla h_\eps|^2+
\eps^{-{d}/{2}-1}\left(V(\frac{t}{\eps^2},\frac{x}{\eps})-c_0\right).
\end{equation}
The rescaled random potential $\eps^{-{d}/{2}-1}V({t}/{\eps^2},{x}/{\eps})$ converges to the space-time white noise, 
while the nonlinear term formally disappears as $\eps\to0$ in $d\geq3$. 
The authors in \cite[Theorem 0.1]{magnen2017diffusive} show 
that if the initial condition $h_0(x)$ for 
the un-scaled KPZ equation (\ref{sep1310}) is rapidly decaying, then
for~$\lambda$ sufficiently small, 
the Edwards-Wilkinson model shows up in the limit: 
\begin{equation}\label{e.conkpz}
h_\eps(t,x)-\E[h_\eps(t,x)]\to \mathscr{H}(t,x),
\end{equation}
in the sense of convergence of the corresponding
multipoint correlation functions. 
Here, $\mathscr{H}$ is the   solution to
\begin{equation}\label{eq-wrong}
\partial_t \mathscr{H}=\frac12D_{\mathrm{eff}}\Delta \mathscr{H}+\mu_{\mathrm{eff}}\dot{W}
\end{equation}
{with zero initial conditions},
for some $D_{\mathrm{eff}},\mu_{\mathrm{eff}}>0$.
One difference from our setting is that we consider the initial conditions for the 
un-scaled stochastic heat equation (\ref{e.maineq}) that vary on a macroscopic scale:~$u(0,x)=u_0(\eps x)$.
 {Disregarding}
  this difference, we 
try to interpret the convergence in~\eqref{e.conkpz} on the level of 
the stochastic heat equation, using the relation 
\[
u_\eps(t,x)=\exp\left(\lambda \eps^{{d}/{2}-1}h_\eps(t,x)+\frac{\lambda c_0t}{\eps^2}\right).
\] 
As proved in Theorem~\ref{t.mainth},
\begin{equation}\label{e.conshekpz}
\frac{1}{\eps^{{d}/{2}-1}} \int\limits_{\R^d} \left(e^{\lambda \eps^{{d}/{2}-1}h_\eps(t,x)}-
\E[e^{\lambda \eps^{{d}/{2}-1}h_\eps(t,x)}]\right)e^{{\lambda c_0t}/{\eps^2}}e^{-{c_1t}/{\eps^2}-c_2}g(x)
dx\Rightarrow \int\limits_{\R^d} \cU(t,x)g(x)dx.
\end{equation}
If we use the approximation 
\begin{equation}\label{e.approx}
e^{\lambda \eps^{{d}/{2}-1}h_\eps(t,x)}\approx 1+\lambda \eps^{{d}/{2}-1}h_\eps(t,x),
\end{equation}
and choose $\lambda c_0=c_1$, then \eqref{e.conkpz} follows from \eqref{e.conshekpz}. {Our current methods however do no provide 
  sufficiently strong error bounds in \eqref{e.approx} to justify the 
approximation. }
{
\begin{remark}
One possible way to check that the effective constants 
in \eqref{eq-wrong} match the ones in \eqref{e.ewshe} is to 
expand them in terms of the coupling constant $\lambda$ and 
compare the coefficients; we do not carry out this comparison here, and note
that in any case, to get high order 
coefficients seems difficult in our setting. 
\end{remark}
}

\medskip
{\bf Organization of the paper.} The paper is organized as follows. 
In Section~\ref{sec:prelim} we introduce a tilted Brownian motion
and  use the Clark-Ocone formula to establish 
 {in Lemma~\ref{l.stochrepre}} 
a 
representation for the fluctuation as a stochastic integral,
and obtain 
 {in Lemma~\ref{l.varre}}
an expression for its variance. 
In Section~\ref{sec:thm-proof} we prove Theorem~\ref{t.mainth}.
Assuming the main technical result, Proposition~\ref{p.varcon}, 
we show that the fluctuation does depend only on the ``recent past'' of the noise,
and use this to prove the central limit theorem for the fluctuations. 
The proof of Proposition~\ref{p.varcon} presented in Section~\ref{s.pp}
relies on the properties of a Markov chain on the space of path increments that is constructed in Section~\ref{s.chain}. Finally, some
technical results are proved in the Appendix.

\subsection*{Acknowledgments}  We thank Herbert Spohn for bringing 
\cite{magnen2017diffusive} to our attention and motivating this study,
and Chiranjib Mukherjee for making \cite{mukherjee2017central} available to us and 
for useful discussions.
Y.G. was partially supported by the NSF through DMS-1613301 and by the Center for Nonlinear Analysis of CMU, L.R. 
was partially supported by the NSF grant DMS-1613603 and ONR grant N00014-17-1-2145, and
O.Z. was partially supported by a Poincare visiting professorship at Stanford,
by an Israel Science Foundation grant, and
by the ERC advanced grant LogCorFields. 
We would like to thank the anonymous referees 
for helpful comments and suggestions.

\section{Preliminaries: a stochastic integral and variance representation}\label{sec:prelim}


The goal in this section is to express 
the deviation of the solution of (\ref{e.maineq}) from its mean
in terms of a stochastic integral   given by
the Clark-Ocone formula, 
and present a convenient formula for its second moment. 
Let~$B$ be a standard Brownian motion starting from the origin 
that is independent 
from the random potential~$V$, and let
$\E_B$ denote the expectation with respect to $B$.  
We define the renormalization constant
\begin{equation}\label{e.renormalization}
\zeta_t:=\log\E_B\Big[\exp\Big\{\frac{\lambda^2}{2}\int_{[0,t]^2}R(s-u,B_s-B_u)dsdu\Big\}\Big],
\end{equation}
and denote by
$\what{\E}_{B,t}$ the expectation 
with respect to a tilted Brownian path on~$[0,t]$: 
for any integrable random variable $f(B)$ depending on~$B=\{B_s: s\geq 0\}$, set
\begin{equation}\label{sep1416}
\what{\E}_{B,t}[f(B)]:=\E_B\Big[f(B) \exp\Big\{\frac{\lambda^2}{2}\int_{[0,t]^2}R(s-u,B_s-B_u)dsdu-\zeta_t\Big\}\Big].
\end{equation}
For two independent tilted Brownian motions $B^1,B^2$ on $[0,t]$, we write
\[
\what{\E}_{B,t}[f(B^1,B^2)]=\E_{B}\Big[f(B^1,B^2) \prod_{i=1}^2 
\exp\Big\{\frac12\lambda^2\int_{[0,t]^2}R(s-u,B_s^i-B_u^i)dsdu- \zeta_t\Big\}\Big].
\]

For $t>0,x\in\R^d$ and every realization of the Brownian motion, we define
    \begin{equation}\label{e.defPhi}
  \Phi_{t,x,B}(s,y):=\int_0^t \phi(t-r-s)\psi(x+B_r-y)dr,
  \end{equation}
and the square-integrable martingale
\begin{equation}
M_{t,x,B}(r):=\int_{-\infty}^r \int_{\R^d}\Phi_{t,x,B}(s,y)dW(s,y),
\end{equation}
with   quadratic variation
\begin{equation}
\la M_{t,x,B}\ra_r=\int_{-\infty}^r \int_{\R^d} |\Phi_{t,x,B}(s,y)|^2dsdy.
\end{equation}
Since $\phi$ is supported on $[0,1]$, $\Phi_{t,x,B}(s,y)\neq 0$ only when $s\in[-1,t]$.

The following lemma expresses the  random 
fluctuations of $u(t,x)$ in terms of a  stochastic integral.
\begin{lemma}\label{l.stochrepre}
Let $u(t,x)$ be a solution to (\ref{e.maineq}), then
for any $t>0$ and $x\in\R^d$, we have 
\begin{equation}\label{sep1406}
(u(t,x)-\E[u(t,x)])e^{-\zeta_t}=\lambda\int\limits_{-1}^t\int\limits_{\R^d} \what{\E}_{B,t}\Big[u(0,x+B_t)\Phi_{t,x,B}(r,y)
\exp\Big\{\lambda M_{t,x,B}(r)-\frac{\lambda^2}2\la M_{t,x,B}\ra_r\Big\} \Big] dW(r,y).
\end{equation}
\end{lemma}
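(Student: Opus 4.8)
The plan is to combine the Feynman--Kac representation of $u(t,x)$ with the Clark--Ocone formula from Malliavin calculus, and then reorganize the resulting Malliavin derivative into the tilted-measure form stated in the lemma. First I would write the Feynman--Kac formula for the solution of \eqref{e.maineq}:
\begin{equation*}
u(t,x)=\E_B\Big[u(0,x+B_t)\exp\Big\{\lambda\int_0^t V(t-r,x+B_r)\,dr\Big\}\Big],
\end{equation*}
where $B$ is a Brownian motion independent of $V$ (the time reversal $t-r$ is a harmless bookkeeping choice that matches the definition of $\Phi_{t,x,B}$ in \eqref{e.defPhi}). Plugging in $V(t-r,x+B_r)=\int_{\R^{d+1}}\phi(t-r-s)\psi(x+B_r-y)\,dW(s,y)$ and using Fubini to interchange the $B_r$-integral with the $dW$-integral shows that, conditionally on $B$, the exponent is the Wiener integral $\lambda M_{t,x,B}(t)$ against the deterministic kernel $\Phi_{t,x,B}(s,y)$. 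Thus $u(t,x)=\E_B[u(0,x+B_t)\exp\{\lambda M_{t,x,B}(t)\}]$, and a Gaussian (Wick) computation gives $\E[\exp\{\lambda M_{t,x,B}(t)\}]=\exp\{\tfrac{\lambda^2}{2}\langle M_{t,x,B}\rangle_t\}$ with $\langle M_{t,x,B}\rangle_t=\int_{[0,t]^2}R(s-u,B_s-B_u)\,ds\,du$, which identifies $\zeta_t$ in \eqref{e.renormalization} as $\zeta_t=\log\E[u(t,x)]$ when $u(0,\cdot)\equiv 1$; in general $\E[u(t,x)]=\E_B[u(0,x+B_t)\exp\{\tfrac{\lambda^2}{2}\langle M_{t,x,B}\rangle_t\}]=e^{\zeta_t}\what{\E}_{B,t}[u(0,x+B_t)]$ after recognizing the tilt \eqref{sep1416}.

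Next I would apply the Clark--Ocone formula to the random variable $F:=u(t,x)$, which is a smooth functional of the white noise $W$:
\begin{equation*}
F-\E[F]=\int_{\R^{d+1}}\E\big[D_{r,y}F\,\big|\,\F_r\big]\,dW(r,y),
\end{equation*}
where $D_{r,y}$ is the Malliavin derivative and $\F_r=\sigma(W(s,\cdot):s\le r)$. The Malliavin derivative of $F$ is computed by differentiating the Feynman--Kac exponent: since $M_{t,x,B}(t)=\int\Phi_{t,x,B}\,dW$ is linear in $W$ with $D_{r,y}M_{t,x,B}(t)=\Phi_{t,x,B}(r,y)$, the chain rule gives
\begin{equation*}
D_{r,y}F=\lambda\,\E_B\Big[u(0,x+B_t)\,\Phi_{t,x,B}(r,y)\,\exp\{\lambda M_{t,x,B}(t)\}\Big].
\end{equation*}
Then I would compute the conditional expectation $\E[\,\cdot\,|\F_r]$. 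Here the key structural fact is that $\Phi_{t,x,B}(s,y)$ is supported in $s\in[-1,t]$, so $M_{t,x,B}(t)-M_{t,x,B}(r)$ is the Wiener integral over $s\in(r,t]$ and is independent of $\F_r$. Conditioning only touches this future part: $\E[\exp\{\lambda(M_{t,x,B}(t)-M_{t,x,B}(r))\}\,|\,\F_r]=\exp\{\tfrac{\lambda^2}{2}(\langle M_{t,x,B}\rangle_t-\langle M_{t,x,B}\rangle_r)\}$, while the factor $\exp\{\lambda M_{t,x,B}(r)\}$ is $\F_r$-measurable. This produces $\E[D_{r,y}F|\F_r]=\lambda\,\E_B[u(0,x+B_t)\Phi_{t,x,B}(r,y)\exp\{\lambda M_{t,x,B}(r)-\tfrac{\lambda^2}{2}\langle M_{t,x,B}\rangle_r+\tfrac{\lambda^2}{2}\langle M_{t,x,B}\rangle_t\}]$; pulling the $r$-independent factor $\exp\{\tfrac{\lambda^2}{2}\langle M_{t,x,B}\rangle_t\}$ out converts $\E_B$ into $e^{\zeta_t}\what{\E}_{B,t}$, and dividing by $e^{\zeta_t}$ yields exactly \eqref{sep1406}.

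The main technical obstacle is justifying the application of the Clark--Ocone formula and the interchanges of expectations and stochastic integrals: one must check that $F=u(t,x)\in\mathbb{D}^{1,2}$ (or a suitable space), i.e. that it has enough Malliavin regularity and integrability, and that the $\E_B$-average commutes both with Malliavin differentiation and with the conditional expectation. Since $V$ is a smooth (mollified) Gaussian field, $M_{t,x,B}(t)$ has Gaussian tails with variance $\langle M_{t,x,B}\rangle_t\le C(t)$ uniformly in $B$ (because $\Phi$ is built from the bounded compactly supported $\phi,\psi$), so all exponential moments are finite and the stochastic Fubini theorem applies; I would cite the standard criteria (e.g. Nualart's book) for differentiating under $\E_B$, noting that $u(0,\cdot)\in\C_b$ keeps everything bounded. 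The remaining steps — the Gaussian conditional expectation identity and the support property $s\in[-1,t]$ — are routine once this framework is in place.
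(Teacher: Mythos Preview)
Your proposal is correct and follows essentially the same route as the paper: Feynman--Kac representation, Clark--Ocone formula, computation of the Malliavin derivative via the chain rule, and the Gaussian conditional-expectation identity to split $M_{t,x,B}(t)$ into its past and future parts, followed by recognizing the tilt $\what{\E}_{B,t}$. The paper's argument is organized in the same order and uses the same key identities, including the support observation $s\in[-1,t]$ and the computation $\langle M_{t,x,B}\rangle_t=\int_{[0,t]^2}R(s-u,B_s-B_u)\,ds\,du$.
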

\begin{proof}
Since $\phi(s)=0$ for $s<0$,   $u(t,x)$ 
is adapted to the filtration generated by {$dW$}
up to $t$, denoted by $\F_t$. By the Clark-Ocone formula, we have
\[
u(t,x)-\E[u(t,x)]=\int_{-\infty}^t \E[D_{r,y}u(t,x)|\F_r] dW(r,y).
\]
Here, $D_{r,y}$ denotes the Malliavin derivative.  
As the function $\phi(s)$ is supported in $[0,1]$,
the random potential $V(t,x)$ for $t>0$ depends only on $\dot W(r,y)$ 
for $r>-1$, and so does $u(t,x)$ for $t>0$. 
Therefore, the Malliavin derivative vanishes for $r<-1$, and we have
\begin{equation}\label{sep1412}
u(t,x)-\E[u(t,x)]=\int_{-1}^t \E[D_{r,y}u(t,x)|\F_r] dW(r,y).
\end{equation}
To compute the Malliavin derivative in (\ref{sep1412}), we note that 
by the Feynman-Kac formula, the solution can be written as
\[
u(t,x)=\E_B\Big[u(0,x+B_t)\exp\Big\{\lambda\int_0^t V(t-s,x+B_s)ds\Big\}\Big].
\]
Rewriting the exponent above as 
\[
\begin{aligned}
\int_0^tV(t-s,x+B_s)ds  =&\int_0^t\left( \int_{\R^{d+1}}\phi(t-s-s')\psi(x+B_s-y')dW(s',y')\right) ds\\
=&\int_{\R^{d+1}}\Phi_{t,x,B}(s',y')dW(s',y'),
\end{aligned}
\]
we see that
the Malliavin derivative is given by
\[
\begin{aligned}
D_{r,y}u(t,x)  =\lambda\E_B\Big[u(0,x+B_t)\Phi_{t,x,B}(r,y)\exp\Big\{\lambda\int_{\R^{d+1}}\Phi_{t,x,B}(s',y')dW(s',y')\Big\}\Big ],
\end{aligned}
\]
so  that 
\begin{equation}\label{sep1420}
\E[D_{r,y}u(t,x) |\F_r]
=\lambda\E_B\Big(u(0,x+B_t)\Phi_{t,x,B}(r,y)\E\Big[\exp\Big\{\lambda\int_{
  \R^{d+1}}\Phi_{t,x,B}(s',y')dW(s',y')\Big\}
\Big|\F_r\Big]\Big).
\end{equation}
For the conditional expectation in the right side, we write 
\[
\int_{\R^{d+1}} \Phi_{t,x,B}(s',y')dW(s',y')=\left(\int_{-\infty}^r+\int_r^\infty\right)\int_{\R^d}\Phi_{t,x,B}(s',y') dW(s',y'),
\]
which gives
\begin{equation}\label{sep1418}
\begin{aligned}
\E\Big[\exp\Big\{\lambda\int_{\R^{d+1}}\Phi_{t,x,B}(s',y')dW(s',y')\Big\}\Big|\F_r\Big]
=&\exp\Big\{\lambda M_{t,x,B}(r)-\frac{\lambda^2}{2}\la M_{t,x,B}\ra_r\Big\} \\
\times&  \exp\Big\{\frac{\lambda^2}{2}\int_{\R^{d+1}}|\Phi_{t,x,B}(s',y')|^2ds'dy'\Big\}.
\end{aligned}
\end{equation}
With the help of the definition (\ref{e.defPhi}) of $\Phi_{t,x,B}$,  together with expression (\ref{sep1414}) for $R(t,x)$ and the fact that 
the function~$\psi$ is even,
the last integral in (\ref{sep1418}) can be written as
\begin{equation}\label{sep1422}
\int_{\R^{d+1}}|\Phi_{t,x,B}(s',y')|^2ds'dy'=\int_{[0,t]^2}R(s-u,B_s-B_u)dsdu.
\end{equation}
Finally, using (\ref{sep1420}), \eqref{sep1418} and (\ref{sep1422}), as well as 
the definition (\ref{sep1416}) of the tilted measure $\what\E_{B,t}$, in~(\ref{sep1412}),
completes the proof of (\ref{sep1406}).
\end{proof}
{
\begin{remark}
The Clark-Ocone formula is useful for separating the mean and the random fluctuation of regular random variables. For example, it has been used in the study of Brownian local time in \cite{hu2009ecp,hu2008integral}.
\end{remark}
}
\subsubsection*{An expression for the variance}

We now use Lemma~\ref{l.stochrepre} for the re-scaled solution
$u_\eps(t,x)=u(t/\eps^2,x/\eps)$, with $u_\eps(0,x)=u_0(x)$.
For any test function $g\in\C_c^\infty(\R^d)$, we have
\begin{equation}\label{e.store}
\begin{aligned}
\int_{\R^d}(u_\eps(t,x)-\E[u_\eps(t,x)])e^{-\zeta_{t/\eps^2}}g(x)dx
=\lambda\int_{-1}^{{t}/{\eps^2}}\int_{\R^d} Z^\eps_t(r,y)dW(r,y),
\end{aligned}
\end{equation}
with 
\begin{equation}\label{e.defZ}
Z^\eps_t(r,y):=\int_{\R^d} g(x) \what{\E}_{B,{t}/{\eps^2}}
\Big[u_0(x+\eps B_{{t}/{\eps^2}})\Phi^\eps_{t,x,B}(r,y)
\exp\Big\{\lambda M^\eps_{t,x,B}(r)-\farc{\lambda^2}2\la M^\eps_{t,x,B}\ra_r\Big\} \Big] dx,
\end{equation}
where 
\[
\Phi_{t,x,B}^\eps:=\Phi_{{t}/{\eps^2},{x}/{\eps},B},  \    \  M_{t,x,B}^\eps:=M_{{t}/{\eps^2},{x}/{\eps},B}.
\]
Thus, the proof of the fluctuation convergence \eqref{e.mainthCLT} in Theorem~\ref{t.mainth}
reduces to the analysis of the stochastic integral 
\begin{equation}\label{e.stochainte}
\frac{1}{\eps^{{d}/{2}-1}}\int_{\blue -1}^{{t}/{\eps^2}}\int_{\R^d} Z^\eps_t(r,y)dW(r,y),
\end{equation}
provided that we can replace $\zeta_{t/\eps^2}\mapsto c_1t/\eps^2+c_2$ as $\eps\to0$.

We express the variance of the stochastic integral 
in \eqref{e.stochainte} in a more  explicit form. 
First, we need to introduce some notation. We define 
\begin{equation}\label{e.defco}
R_\psi(x)=\int_{\R^d} \psi(x-y)\psi(y)dy,  \   \  R_\phi(t_1,t_2)=\int_0^\infty \phi(s-t_1)\phi(s-t_2)ds.
\end{equation}
Since $\psi$ is supported on $\{x: |x|\leq 1/2\}$ and $\phi$ on $[0,1]$, 
we know that
$R_\psi$ is supported on $\{x:|x|\leq 1\}$ and $R_\phi(t_1,t_2)=0$ if $t_1<-1$ or $t_2<-1$. In addition, $R_\phi(t_1,t_2)=0$ if $|t_1-t_2|\geq 1$.

From now on, we fix $t>0$. Given two continuous paths $B^1,B^2\in \C([0,{t}/{\eps^2}])$, 
 we set
\[
\Delta B^i_{u,v}=B^i_v-B^i_u.
\]
For 
$x_1,x_2,y\in\R^d, s_1,s_2\in[0,1], r\in[0,t]$ and $-1<M_1,M_2\leq r/\eps^2$, we define
\begin{equation}\label{e.defJ1}
\begin{aligned}
\mathcal{\I}_\eps&=\I_\eps(x_1,x_2,y,s_1,s_2,r)= \prod_{i=1}^2 g(\eps x_i+y-\eps B_{\frac{t-r}{\eps^2}-s_i}^i)
u_0(\eps x_i+y+\eps \Delta B^i_{\frac{t-r}{\eps^2}-s_i,
\frac{t}{\eps^2}}),
\end{aligned}
\end{equation}
and
\begin{equation}\label{e.defJ2}
\begin{aligned}
&\mathcal{J}_\eps(M_1,M_2)=\mathcal{J}_\eps(M_1,M_2,x_1,x_2,s_1,s_2,r)\\
&=\lambda^2\int_{-1}^{M_1}\int_{-1}^{M_2}R_\phi(u_1,u_2)R_\psi(x_1-x_2+
  \Delta B^1_{\frac{t-r}{\eps^2}-s_1,\frac{t-r}{\eps^2}+u_1}
  -\Delta B^2_{ \frac{t-r}{\eps^2}-s_2,\frac{t-r}{\eps^2}+u_2})du_1du_2.
\end{aligned}
\end{equation}
To simplify the notation, we write $\I_\eps$ and $\mathcal{J}_\eps(M_1,M_2)$ and keep their dependence on $B^i,x_i,y,s_i,r$ implicit. 
\begin{lemma}\label{l.varre}
For any $-1\leq t_1<t_2\leq t-\eps^2$, we have, with $d\bar s=ds_1ds_2$ and $d\bar x=dx_1dx_2$:
\begin{equation}\label{e.varz}
\begin{aligned}
&\frac{1}{\eps^{d-2}}\E\left[\int_{{t_1}/{\eps^2}}^{{t_2}/{\eps^2}} \int_{\R^d} |Z^\eps_t(r,y)|^2 dydr\right]
= \int_{t_1}^{t_2}\int_{\R^{3d}}\int_{[0,1]^2}\what{\E}_{B,{t}/{\eps^2}}
\Big[\I_\eps e^{\J_\eps(\frac{r}{\eps^2},\frac{r}{\eps^2})}\Big] \prod_{i=1}^2 \phi(s_i)\psi(x_i)\  d\bar s d\bar xdy dr.
\end{aligned}
\end{equation}
\end{lemma}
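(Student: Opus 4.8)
The proof is a direct computation in three moves: expand the square and decouple it into two independent tilted Brownian motions, integrate out the white noise via a Gaussian moment-generating function, and then rescale. All the Fubini interchanges below are justified by a crude domination: $\I_\eps$ and the prefactors $\Phi^\eps_{t,x_i,B^i}(r,y)$ are bounded, the two factors $g(\cdot)$ localize the $y$-integral while $\psi(x_i)$ localizes $x_i$, and — since $\phi,\psi\ge0$ make $\Phi_{t,x,B}\ge0$, hence the cross bracket appearing below monotone in $r$ — the exponential factor is bounded by its value at $r=\infty$, whose tilted expectation is finite uniformly in $\eps$ because $\lambda<\lambda_0$ places us in the $L^2$ (weak-disorder) regime.

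First I would square $Z^\eps_t(r,y)$ from \eqref{e.defZ}: the square of the single tilted expectation becomes an expectation over two independent tilted Brownian paths $B^1,B^2$ on $[0,t/\eps^2]$ (the tilting weight in \eqref{sep1416} depends on the path only, not on $W$, so this factorization is legitimate), and, apart from the deterministic factors $g(x_i)$, $u_0(x_i+\eps B^i_{t/\eps^2})$ and $\Phi^\eps_{t,x_i,B^i}(r,y)$, the integrand carries $\exp\{\lambda M^1(r)-\tfrac{\lambda^2}2\la M^1\ra_r+\lambda M^2(r)-\tfrac{\lambda^2}2\la M^2\ra_r\}$ with $M^i:=M^\eps_{t,x_i,B^i}$. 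Next I would take $\E$ over $W$ conditionally on $(B^1,B^2)$: then $(M^1(r),M^2(r))$ is centered jointly Gaussian with variances $\la M^i\ra_r$ and covariance $\la M^1,M^2\ra_r:=\int_{-1}^{r}\!\int_{\R^d}\Phi^\eps_{t,x_1,B^1}(s,z)\,\Phi^\eps_{t,x_2,B^2}(s,z)\,dz\,ds$ (lower limit $-1$ since $\Phi$ vanishes for $s<-1$), so the Gaussian moment-generating function collapses the exponential to $\exp\{\lambda^2\la M^1,M^2\ra_r\}$. Finally I would unfold this cross bracket using the definition \eqref{e.defPhi} of $\Phi$: the $z$-integral produces $R_\psi$ exactly as in \eqref{sep1422} (using that $\psi$ is even), the time-convolution of the two $\phi$'s produces $R_\phi$ after a shift, and simultaneously I would open up each prefactor $\Phi^\eps_{t,x_i,B^i}(r,y)$ by the substitution $\rho\mapsto s_i$ with $\rho=\tfrac{t}{\eps^2}-r-s_i$, turning its defining $\rho$-integral into $\int_0^1\phi(s_i)\psi(\tfrac{x_i}{\eps}+B^i_{\frac{t}{\eps^2}-r-s_i}-y)\,ds_i$.

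Now comes the rescaling. I would substitute $r\mapsto r/\eps^2$ (contributing $\eps^{-2}$ to the measure), $y\mapsto y/\eps$ (contributing $\eps^{-d}$), and then, path by path under $\what{\E}$, $x_i\mapsto \eps x_i+y-\eps B^i_{\frac{t-r}{\eps^2}-s_i}$ (contributing $\eps^{d}$ each); the net factor $\eps^{-2}\eps^{-d}\eps^{2d}=\eps^{d-2}$ cancels the prefactor $\eps^{-(d-2)}$ in \eqref{e.varz}. After these substitutions the Brownian terms inside each prefactor $\psi$ cancel, leaving $\psi(x_i)$, so $\phi(s_i)\psi(x_i)$ come out of $\what{\E}$ and produce the $\prod_i\phi(s_i)\psi(x_i)$ and $\int_{[0,1]^2}$ on the right of \eqref{e.varz}; the arguments of $g$ and $u_0$ become precisely those in $\I_\eps$ of \eqref{e.defJ1}; and, after the further substitution $\rho\mapsto u_i$ with $\rho=\tfrac{t-r}{\eps^2}+u_i$ in the cross bracket, its $R_\psi$-argument becomes $x_1-x_2+\Delta B^1_{\frac{t-r}{\eps^2}-s_1,\,\frac{t-r}{\eps^2}+u_1}-\Delta B^2_{\frac{t-r}{\eps^2}-s_2,\,\frac{t-r}{\eps^2}+u_2}$, while the support of $\phi$ (recorded after \eqref{e.defco}) and the range $[-1,r/\eps^2]$ of each $u_i$ identify the exponent with $\J_\eps(\tfrac{r}{\eps^2},\tfrac{r}{\eps^2})$ of \eqref{e.defJ2}. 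Collecting the pieces yields the right-hand side of \eqref{e.varz}; the constraint $t_2\le t-\eps^2$ is what guarantees $\tfrac{t-r}{\eps^2}\ge1$, so the support cutoffs fall where stated.

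I do not expect a single deep obstacle; the difficulty is bookkeeping. One must carry four families of variables without confusing them — the stochastic-integration variables $(r,y)$, the ``centers'' $x_1,x_2$, the ``increment-age'' variables $s_1,s_2$ opened from the prefactors, and $u_1,u_2$ opened from the cross bracket — and verify that the two nonlocal kernels land correctly: $R_\psi$ from integrating out the spatial dummy $z$ (using $\psi$ even), and $R_\phi$ from the time-convolution of $\phi$ (after the translations that bring its limits into the form of \eqref{e.defco}). It is in the two substitutions $\rho\mapsto s_i$, $\rho\mapsto u_i$ and in the $z$- and $s$-integrals that a sign or a shift is easiest to mishandle, so I would write those out with full care and treat the remaining Jacobian counting as routine.
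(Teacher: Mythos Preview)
Your proposal is correct and follows essentially the same route as the paper's proof: expand the square into two independent tilted paths, collapse the $W$-expectation via the Gaussian moment-generating formula to get $\exp\{\lambda^2\langle M^1,M^2\rangle_r\}$, express the cross bracket through $R_\phi,R_\psi$, and then perform the rescalings $r\mapsto r/\eps^2$, $y\mapsto y/\eps$, $x_i\mapsto \eps x_i+y-\eps B^i_{(t-r)/\eps^2-s_i}$ together with the shifts in $s_i$ and $u_i$. The only cosmetic difference is the order of substitutions: the paper first changes $x_i\mapsto \eps x_i-\eps B^i_{s_i}+\eps y$ (with the unshifted $s_i$) and only afterwards shifts $s_i\mapsto (t-r)/\eps^2-s_i$, whereas you fold the $s_i$-shift in before the $x_i$-substitution; the Jacobian count and the final identification with $\I_\eps$ and $\J_\eps(r/\eps^2,r/\eps^2)$ are identical.
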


\begin{proof}
The proof is a straightforward calculation with multiple changes of variables. We first write 
\[
|Z^\eps_t(r,y)|^2=\what{\E}_{B,{t}/{\eps^2}}\int_{\R^{2d}}\prod_{i=1}^2 g(x_i)
u_0(x_i+\eps B_{{t}/{\eps^2}}^i)\Phi^\eps_{t,x_i,B^i}(r,y)
\exp\Big\{\lambda M^\eps_{t,x_i,B^i}(r)-\frac12\lambda^2\la M^\eps_{t,x_i,B^i}\ra_r\Big\}d\bar x,
\]
where we recall that $\what{\E}_{B,t/\eps^2}$ is the expectation with respect to the tilted measure defined in \eqref{sep1416}. Taking the expectation $\E$ above, for each
$B^1,B^2$ fixed we have
\[
\E\left[\prod_{i=1}^2e^{\lambda M^\eps_{t,x_i,B^i}(r)-\frac12\lambda^2\la M^\eps_{t,x_i,B^i}\ra_r}\right]=e^{\lambda^2\la M^\eps_{t,x_1,B^1},M^\eps_{t,x_2,B^2}\ra_r},
\]
with
\[
\la M^\eps_{t,x_1,B^1},M^\eps_{t,x_2,B^2}\ra_r=\int_{-\infty}^r \int_{\R^d}\Phi_{t,x_1,B^1}^\eps(s',z)\Phi_{t,x_2,B^2}^\eps(s',z)dzds'.
\]
Next, we write 
\begin{equation}\label{sep1514}
\Phi^\eps_{t,x_1,B^1}(r,y)\Phi^\eps_{t,x_2,B^2}(r,y)=\int_{[0,{t}/{\eps^2}]^2}
\prod_{i=1}^2\phi(\frac{t}{\eps^2}-s_i-r)\psi(\frac{x_i}{\eps}+B_{s_i}^i-y)ds_1ds_2.
\end{equation}
We consider the integral in $x,y$ and change variables $x_i\mapsto \eps x_i-\eps B_{s_i}^i+\eps y$, $y\mapsto  y/\eps$ to obtain
\begin{equation}\label{sep1516}
\begin{aligned}
&\E\left[  \int_{\R^d} |Z^\eps_t(r,y)|^2 dy\right]
=\what{\E}_{B,{t}/{\eps^2}}\int_{[0,{t}/{\eps^2}]^2}
\int_{\R^{3d}}\prod_{i=1}^2 g(x_i)u_0(x_i+\eps B_{{t}/{\eps^2}}^i)
\psi(\frac{x_i}{\eps}+B^i_{s_i}-y) \phi(\frac{t}{\eps^2}-s_i-r)\\
& \times 
\exp\Big\{\lambda^2\int_{-\infty}^r \int_{\R^d}\Phi_{t,x_1,B^1}^\eps(s',z)\Phi_{t,x_2,B^2}^\eps(s',z)dzds'\Big\} d{\bar x}dy 
d{\bar s}\\
&=\eps^d\what{\E}_{B,{t}/{\eps^2}}\int_{[0,{t}/{\eps^2}]^2}\int_{\R^{3d}}
\prod_{i=1}^2 g(\eps x_i+y-\eps B_{s_i}^i)u_0(\eps x_i+y+\eps B_{{t}/{\eps^2}}^i-\eps B_{s_i}^i)\psi(x_i) 
\phi(\frac{t}{\eps^2}-s_i-r)\\
&\times \exp\Big\{\lambda^2\int_{-\infty}^r \int_{\R^d}\Phi_{t,\eps x_1-\eps B_{s_1}^1+y,B^1}^\eps(s',z)
\Phi_{t,\eps x_2-\eps B_{s_2}^2+y,B^2}^\eps(s',z)dzds'\Big\} 
{d\bar x dy d\bar s}.
\end{aligned}
\end{equation}
The exponent in the last line above can be written as
\begin{equation}\label{sep1518}
\begin{aligned}
&\int_{-\infty}^r \int_{\R^d}\Phi_{t,\eps x_1-\eps B_{s_1}^1+y,B^1}^\eps(s',z)\Phi_{t,\eps x_2-\eps B_{s_2}^2+y,B^2}^\eps(s',z)dzds'\\
&=\int_{-\infty}^r \int_{\R^d}\int_{[0,t/\eps^2]^2}\phi(\frac{t}{\eps^2}-u_1-s')\phi(\frac{t}{\eps^2}-u_2-s')
\psi(x_1-B_{s_1}^1+\farc{y}{\eps}+B_{u_1}^1-z)
 \\
&\times\psi(x_2-B_{s_2}^2+\farc{y}{\eps}+B_{u_2}^2-z)du_1du_2dzds'
\\
&=\int_{[0,{t}/{\eps^2}]^2}R_\phi(u_1+r-\frac{t}{\eps^2},u_2+r-\frac{t}{\eps^2})
R_\psi(x_1-x_2+\Delta B_{s_1,u_1}^1-\Delta B_{s_2,u_2}^2)
du_1du_2,
\end{aligned}
\end{equation}
with $R_\phi,R_\psi$ defined in \eqref{e.defco}. Next,  
we also integrate in the $r$-variable, with a change of variable $r\mapsto {r}/{\eps^2}$, so that 
\begin{equation}\label{sep1424}
\begin{aligned}
\E\left[\int_{{t_1}/{\eps^2}}^{{t_2}/{\eps^2}} \int_{\R^d} |Z^\eps_t(r,y)|^2 dydr\right]
=\eps^{d-2} \int_{t_1}^{t_2}\int_{\R^{3d}}\int_{[0,{t}/{\eps^2}]^2}\what{\E}_{B,{t}/{\eps^2}}[I e^{J}] \  d\bar s d\bar x
dy dr,
\end{aligned}
\end{equation}
with
\begin{equation}\label{sep1426}
\begin{aligned}
I&=\prod_{i=1}^2 g(\eps x_i+y-\eps B_{s_i}^i)u_0(\eps x_i+y+
\eps \Delta B_{s_i,\frac{t}{\eps^2} }^i)\phi(\frac{t-r}{\eps^2}-s_i)\psi(x_i),\\
J&=\lambda^2\int_{[0,{t}/{\eps^2}]^2}R_\phi(u_1-\frac{t-r}{\eps^2},u_2-\frac{t-r}{\eps^2})R_\psi(x_1-x_2+
\Delta B_{s_1,u_1}^1-\Delta B_{s_2,u_2}^2)du_1du_2.
\end{aligned}
\end{equation}
As $\phi$ is supported on $[0,1]$, the integration domain in $s_i$ is 
is $[\frac{t-r}{\eps^2}-1,\frac{t-r}{\eps^2}]$, 
because of the corresponding factor  
in the expression for $I$ in (\ref{sep1426}). A change of variable $s_i\mapsto {(t-r)}/{\eps^2}-s_i$ turns the domain of integration in $s_i$
into $[0,1]$, as in (\ref{e.varz}). It also turns $I$ in (\ref{sep1426}) into expression (\ref{e.defJ1}) for ${\cal I}_\eps$.
For the integral in~$u_i$ in the expression for $J$ in (\ref{sep1426}), 
 to have $R_\phi\neq 0$, we need 
$u_i\geq \frac{t-r}{\eps^2}-1$, so the integration domain for $u_i$ is $[ \frac{t-r}{\eps^2}-1,\frac{t}{\eps^2}]$. 
The change of variable  $u_i\mapsto \frac{t-r}{\eps^2}+u_i$  turns this 
into~$[-1,r/\eps^2]$, and $J$ into $\J_\eps(r/\eps^2,r/\eps^2)$.  
This completes the proof of (\ref{e.varz}). \end{proof}

\begin{remark}\label{r.intedomain}
The assumption $t_2\leq t-\eps^2$
in the statement of Lemma~\ref{l.varre} 
is only made to simplify the presentation of the result. 
For any $t_2\leq t$, a similar result holds -- we only need to 
modify the integration domain for $u_1,u_2$ in \eqref{e.defJ2} to 
$[-{(t-r)}/{\eps^2},{r}/{\eps^2}]^2$ and that of $s_1,s_2$ in \eqref{e.varz} to~$[0,{(t-r)}/{\eps^2}]^2$ -- 
this only makes a difference when $t-r\leq \eps^2$. 
\end{remark}

\section{Proof of Theorem~\ref{t.mainth}}\label{sec:thm-proof} 


{We first prove the central limit theorem 
for the centered random fluctuation in \eqref{e.mainthCLT}, 
and then the leading order homogenization result in \eqref{e.conmean}. In the course of the proof, we replace the renormalization factor 
$e^{-c_1t/\eps^2-c_2}$ by $e^{-\zeta_{{t}/{\eps^2}}}$. The replacement
will be justified below 
in Lemma~\ref{l.renormalizationconstant} of the appendix.
}

\subsection*{Convergence of the fluctuations: the outline}

Fix a test function $g(x)\in C_c^\infty(\R^d)$, and go back to~(\ref{e.store})-\eqref{e.stochainte}. Our goal will be to show
that
the integrand $Z_t^\eps(r,y)$ depends mainly on $\dot{W}(s,\cdot)$ with $s$ close to $r$, so that
the stochastic integral is an approximate linear combination of 
strongly mixing processes,
which should satisfy
a central limit theorem. To make the ``local dependence'' more precise, we decompose 
the interval  $[-1,{t}/{\eps^2}]$ of integration in (\ref{e.store})
into alternating subintervals of size $\eps^{-\alpha}$ and $\eps^{-\beta}$ with $0<\alpha<\beta<2$:
\[
[-1,\frac{t}{\eps^2}]=[-1, \eps^{-\alpha}]\cup [\eps^{-\alpha},\eps^{-\beta}+\eps^{-\alpha}] 
\cup[\eps^{-\beta}+\eps^{-\alpha},\eps^{-\beta}+2\eps^{-\alpha}]\cup\ldots \cup [t_\eps,\frac{t}{\eps^2}],
\]
with $t_\eps$ chosen so that $|{t}/{\eps^2}-t_\eps|=O(\eps^{-\beta})$.

Denote the ``short'' intervals of  length
$\eps^{-\alpha}$ by $\{ I_{\alpha,j}\}$ and 
the ``long'' ones of  length
$\eps^{-\beta}$ by~$\{I_{\beta,j}\}$, and set
\[ 
I_\alpha=\bigcup_j I_{\alpha,j}, ~~I_\beta=\bigcup_jI_{\beta,j}.
\]
The last piece $[t_\eps,{t}/{\eps^2}]$ is assigned to $I_{\alpha}$. We will define a modification~$\tilde{Z}_t^\eps(r,y)$
of~$Z_t^\eps(r,y)$ for $r\in I_\beta$, in \eqref{e.deftildez},
so that  $\tilde{Z}_t^\eps(r,y)$ 
only depends on~$\dot{W}(s,\cdot)$ with  $s\in (r-\eps^{-\alpha},r]$, and thus 
the random variables
\begin{equation}\label{e.defxj}
\cX_j^\eps:=\frac{1}{\eps^{{d}/{2}-1}}\int_{I_{\beta,j}}\int_{\R^d} \tilde{Z}_t^\eps(r,y)dW(r,y)
\end{equation}
are independent. To prove  the central limit theorem statement \eqref{e.mainthCLT} in Theorem~\ref{t.mainth}, 
it suffices to show that
{\begin{lemma}\label{l.err1}
We have
\begin{equation}\label{sep1512}
\frac{1}{\eps^{d-2}}\int_{I_\beta}\int_{\R^d} \E\left[|Z_t^\eps(r,y)-\tilde{Z}_t^\eps(r,y)|^2\right] dydr\to0~~~\hbox{ as $\eps\to0$.}
\end{equation}
\end{lemma}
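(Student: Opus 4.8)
The plan is to exploit the finite range in time of the correlation function $R$ together with the martingale exponential structure in \eqref{e.defZ} to show that replacing $Z_t^\eps$ by its local modification $\tilde Z_t^\eps$ costs only a negligible $L^2$ error. First I would record the precise definition of $\tilde Z_t^\eps(r,y)$: for $r$ in a long interval $I_{\beta,j}$, one truncates the tilted path expectation and the stochastic exponential $\exp\{\lambda M_{t,x,B}^\eps(r)-\tfrac{\lambda^2}2\langle M_{t,x,B}^\eps\rangle_r\}$ so that only the increments of $\dot W$ over $(r-\eps^{-\alpha},r]$ enter, i.e. one restarts the martingale at time $r-\eps^{-\alpha}$ and conditions the tilting on the path after that time. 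The difference $Z_t^\eps-\tilde Z_t^\eps$ is then, schematically, $\what\E_{B,t/\eps^2}$ of $u_0\cdot\Phi^\eps\cdot(E_r-\tilde E_r)$ where $E_r$ is the true exponential and $\tilde E_r$ the truncated one; since both are (conditionally) mean-one multiplicative functionals, $E_r-\tilde E_r = E_r(1-\tilde E_r/E_r)$ and the ratio is controlled by the part of the quadratic variation $\langle M^\eps\rangle$ that was discarded, namely the contribution of $s\le r-\eps^{-\alpha}$.

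The key step is then a second-moment computation patterned exactly on the proof of Lemma~\ref{l.varre}: I would expand $\E[|Z_t^\eps-\tilde Z_t^\eps|^2]$ over two independent tilted copies $B^1,B^2$, carry out the same changes of variables $x_i\mapsto \eps x_i + y - \eps B^i_{s_i}$, $y\mapsto y/\eps$, $r\mapsto r/\eps^2$, and arrive at an expression of the same form as \eqref{e.varz} but with the integrand $\I_\eps e^{\J_\eps(r/\eps^2,r/\eps^2)}$ replaced by a quantity that vanishes unless at least one of the two "interaction" ranges reaches back before $r-\eps^{-\alpha}$. Concretely, because $R_\phi(u_1,u_2)$ is supported on $|u_1-u_2|\le 1$ and $R_\psi$ is bounded with bounded support, the mismatch between $\J_\eps(r/\eps^2,r/\eps^2)$ and its truncated analogue $\J_\eps(r/\eps^2\wedge\text{(local cutoff)},\cdot)$ is bounded by $\lambda^2\|R_\phi\|\,\|R_\psi\|_\infty$ times the measure of the set of $(u_1,u_2)$ with, say, $u_1\le -(\eps^{-\alpha})$ after rescaling — this forces the two points $s_1$ (near $r$) and $u_1$ (at least $\eps^{-\alpha}$ in the past) to be separated, and that event has small probability under the tilted law. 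I would use the elementary bound $|e^{a}-e^{b}|\le (e^{|a|}+e^{|b|})|a-b|$ together with Cauchy--Schwarz to split the integrand into (uniformly $L^2$-bounded exponential moments — here is where $\lambda<\lambda_0$ and the $L^2$-regime of the polymer are used, via the uniform bound on $\what\E_{B,t/\eps^2}$ of products of exponentials) times the small factor coming from the discarded range.

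The main obstacle, and where the real work lies, is quantifying "the discarded range is small." One must show that, under the tilted measure $\what\E_{B,t/\eps^2}$, the contribution of path-interactions spanning a time gap of order $\eps^{-\alpha}$ is summably small after integrating $r$ over $I_\beta$ and dividing by $\eps^{d-2}$; since $|I_\beta|\sim t\eps^{-2}$, the per-$r$ error must beat $\eps^{2}$, which ultimately relies on the transience/diffusivity of the underlying Brownian path in $d\ge 3$ — the relevant heat-kernel type bound giving that two independent rescaled paths separated in time by $\eps^{-\alpha}$ are typically at spatial distance $\gg 1$, so that $R_\psi$ of their difference is negligible. This is precisely the input that will be packaged in Proposition~\ref{p.varcon} and the Markov-chain estimates of Sections~\ref{s.pp}--\ref{s.chain}; at this stage I would invoke those estimates (uniform decay of the relevant correlation/Green's function for the increment chain, which satisfies a Doeblin condition) as a black box to conclude that the per-$r$ contribution decays like a negative power of $\eps^{-\alpha}$, hence faster than $\eps^2$ once $\alpha$ is chosen appropriately relative to $\beta$, giving \eqref{sep1512}. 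The remaining steps — interchanging expectations, justifying the changes of variables, and bounding the boundary interval $[t_\eps,t/\eps^2]$, which is of length $O(\eps^{-\beta})=o(\eps^{-2})$ and hence contributes $o(1)$ after the $\eps^{-(d-2)}$ normalization — are routine.
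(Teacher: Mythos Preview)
Your plan heads in the right direction but takes a detour that the paper avoids. In the paper, one does \emph{not} try to bound $|e^{\J_\eps(M_1,M_1)}-e^{\J_\eps(M_2,M_2)}|$ via a mean-value inequality plus Cauchy--Schwarz. Instead, one simply expands $\E|Z_t^\eps-\tilde Z_t^\eps|^2=\E|Z_t^\eps|^2-2\E[Z_t^\eps\tilde Z_t^\eps]+\E|\tilde Z_t^\eps|^2$ and observes that the computation of Lemma~\ref{l.varre} applies verbatim to each of the three terms: the only effect of replacing $M^\eps$ by $\tilde M^\eps$ is to change the upper limits of the $u$-integrations, so after the changes of variables one obtains
\[
\frac{1}{\eps^{d-2}}\int_{I_\beta}\int_{\R^d}\E|Z_t^\eps-\tilde Z_t^\eps|^2\,dy\,dr
=\int_0^t\!\!\int_{\R^d} 1_{\{r/\eps^2\in I_\beta\}}\Big(\cF_\eps(r,y,\tfrac{r}{\eps^2},\tfrac{r}{\eps^2})
-2\cF_\eps(r,y,\tfrac{r}{\eps^2},\tfrac{1}{2\eps^\alpha})
+\cF_\eps(r,y,\tfrac{1}{2\eps^\alpha},\tfrac{1}{2\eps^\alpha})\Big)\,dy\,dr.
\]
Proposition~\ref{p.varcon} says that each $\cF_\eps(r,y,M_1,M_2)$ converges to the \emph{same} limit $\nu_{\mathrm{eff}}^2|\bar g(t-r,y)\bar u(r,y)|^2$ whenever $M_1,M_2\to\infty$, and supplies the uniform integrable bound $|\cF_\eps|\le C(1\wedge|y|^{-k})$. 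Dominated convergence then gives zero directly. Your approach would instead re-derive the cutoff estimate that appears inside the proof of Proposition~\ref{p.varcon} (Step~1 of Lemma~\ref{l.pointcon}), which works but duplicates effort.

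There is also a bookkeeping slip in your proposal: after the change of variables $r\mapsto r/\eps^2$, the $r$-integral is over the bounded set $\{r\in[0,t]:r/\eps^2\in I_\beta\}$, and the $\eps^{d-2}$ normalization has already been absorbed. So you do \emph{not} need the per-$r$ error to ``beat $\eps^2$''; pointwise convergence of the integrand together with the uniform bound \eqref{e.bdf} suffices. No quantitative rate in $\eps^{-\alpha}$ is required, and there is no constraint linking $\alpha$ to $\beta$ at this step.
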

%
\begin{lemma}\label{l.err2} We have
\begin{equation}\label{sep1524}
\frac{1}{\eps^{d-2}}\int_{I_\alpha}\int_{\R^d} \E[|Z_t^\eps(r,y)|^2]dydr\to 0~~\hbox{ as $\eps\to0$.}
\end{equation}
\end{lemma}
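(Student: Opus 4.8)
The plan is to bound the $L^2$-norm of $Z^\eps_t(r,y)$ by the variance formula of Lemma~\ref{l.varre} and then to show that the total contribution of the "short" intervals $I_\alpha$ is negligible simply because their total length is $O(\eps^{-\alpha} \cdot \eps^{-(2-\beta)}) = o(\eps^{-2})$, while the integrand in \eqref{e.varz} is $O(1)$ uniformly. More precisely, taking $[t_1,t_2]$ to be a single short interval $I_{\alpha,j}$ (using Remark~\ref{r.intedomain} for the last piece abutting $t/\eps^2$), formula \eqref{e.varz} expresses $\eps^{-(d-2)}\E\int_{I_{\alpha,j}}\int_{\R^d}|Z^\eps_t(r,y)|^2\,dy\,dr$ as an integral over $r\in I_{\alpha,j}$, over $x_1,x_2,y\in\R^d$ and $s_1,s_2\in[0,1]$ of $\what\E_{B,t/\eps^2}[\I_\eps e^{\J_\eps(r/\eps^2,r/\eps^2)}]$ against $\prod_i\phi(s_i)\psi(x_i)$.

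The first key step is a uniform bound on this integrand. The factors $u_0$ and $g$ appearing in $\I_\eps$ are bounded since $u_0\in\C_b$ and $g\in\C_c^\infty$; the $y$-integral is controlled by the compact support of $g$ composed with the integration variable $y$ (after using $|u_0|\le\|u_0\|_\infty$, the $x_i$ are localized by $\psi(x_i)$ and then $y$ ranges over a translate of $\mathrm{supp}\,g$, giving an $O(1)$ volume factor). The genuinely delicate term is the exponential $e^{\J_\eps}$: here $\J_\eps(r/\eps^2,r/\eps^2)=\lambda^2\int_{-1}^{r/\eps^2}\int_{-1}^{r/\eps^2}R_\phi(u_1,u_2)R_\psi(\cdots)\,du_1du_2$, and since $R_\phi(u_1,u_2)=0$ unless $|u_1-u_2|<1$, this double integral is comparable to $\lambda^2\int_{-1}^{r/\eps^2}\int_{|u_2-u_1|<1}R_\psi(\Delta B^1_{\cdot,\cdot}-\Delta B^2_{\cdot,\cdot})\,du_2\,du_1$. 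One then takes $\what\E_{B,t/\eps^2}$ of this exponential; this is exactly the kind of quantity that is uniformly bounded in $\eps$ precisely because $\lambda<\lambda_0$ puts the polymer in the $L^2$ regime — this should either be quoted from an $L^2$-boundedness estimate established earlier/in the appendix, or proven by a Feynman–Kac/Gaussian-correlation argument showing $\sup_{\eps}\what\E_{B,t/\eps^2}[e^{c\J_\eps}]<\infty$ for $c$ slightly larger than $1$, which also absorbs the tilt $\exp\{\tfrac{\lambda^2}{2}\int R(s-u,B_s-B_u)-\zeta_{t/\eps^2}\}$ defining $\what\E_{B,t/\eps^2}$ via Hölder. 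The upshot is that the integrand in \eqref{e.varz} is bounded by a constant $C=C(t,\lambda,\phi,\psi,u_0,g)$ uniformly in $r$ and $\eps$.

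Given that uniform bound, the second step is just accounting: $\eps^{-(d-2)}\E\int_{I_\alpha}\int_{\R^d}|Z^\eps_t(r,y)|^2\,dy\,dr \le C\,|I_\alpha|$, and $|I_\alpha|$ is the number of short intervals, which is $O(\eps^{-(2-\beta)})$, times their common length $\eps^{-\alpha}$, plus the final piece of length $O(\eps^{-\beta})$; hence $|I_\alpha| = O(\eps^{-\alpha-(2-\beta)}) + O(\eps^{-\beta})$. Wait — this needs care: we want the left side of \eqref{sep1524} to vanish, but the left side already has the $\eps^{-(d-2)}$ prefactor built in and \eqref{e.varz} shows that prefactor is exactly what makes the right side $O(|I_\alpha|)$ rather than $O(\eps^{-2}|I_\alpha|)$... let me restate: \eqref{e.varz} gives $\eps^{-(d-2)}\E\int_{t_1/\eps^2}^{t_2/\eps^2}\int|Z^\eps_t|^2 = \int_{t_1}^{t_2}(\cdots)dr$, so summing over the short intervals, $\eps^{-(d-2)}\E\int_{I_\alpha}\int|Z^\eps_t|^2\,dy\,dr \le C\cdot(\text{total }r\text{-length of the rescaled short intervals})$. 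Each short interval $I_{\alpha,j}$ in the $r$-variable (after the $r\mapsto r/\eps^2$ rescaling in the proof of Lemma~\ref{l.varre}) has length $\eps^{-\alpha}\cdot\eps^2 = \eps^{2-\alpha}$, there are $O(\eps^{\beta-2})$ of them, so the total rescaled length is $O(\eps^{\beta-\alpha})\to 0$ since $\alpha<\beta$; the final piece contributes $O(\eps^{2}\cdot\eps^{-\beta})=O(\eps^{2-\beta})\to 0$. This proves \eqref{sep1524}.

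The main obstacle is the uniform-in-$\eps$ bound on $\what\E_{B,t/\eps^2}[\I_\eps e^{\J_\eps}]$: one must control the exponential moment of the (rescaled, truncated) collision local time between two independent tilted Brownian paths, uniformly over the growing time horizon $t/\eps^2$. This is the heart of the weak-disorder/$L^2$ hypothesis $\lambda<\lambda_0$, and I expect it to rely on (a) the finite range of $R_\phi$ in the time variable reducing $\J_\eps$ to a genuine additive functional of $B^1-B^2$, and (b) a Khasminskii-type or Gaussian-correlation-inequality argument bounding $\E_B[e^{c\int_0^\infty R_\psi(B^1_s-B^2_s-a)\,ds}]$ uniformly in $a$ for $c$ small, transient random walk / Brownian motion in $d\ge 3$ guaranteeing finiteness; this is standard in the directed polymer literature but should be invoked carefully, likely pointing to the appendix or to a preliminary $L^2$ estimate.
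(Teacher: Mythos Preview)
Your proposal is correct and follows essentially the same route as the paper: apply Lemma~\ref{l.varre} to rewrite the left side as $\int_0^t\int_{\R^d}1_{\{r/\eps^2\in I_\alpha\}}\cF_\eps(r,y,r/\eps^2,r/\eps^2)\,dy\,dr$, invoke a uniform bound on the integrand, and conclude from $|\{r\in[0,t]:r/\eps^2\in I_\alpha\}|=O(\eps^{\beta-\alpha})+O(\eps^{2-\beta})\to 0$. The paper simply quotes the uniform bound \eqref{e.bdf} from Proposition~\ref{p.varcon} (whose proof, in Section~\ref{s.pp}, is precisely the exponential-integrability-of-collision-time argument you outline), so your sketch is effectively a preview of that machinery; the only cosmetic difference is that the paper controls the $y$-integral via the pointwise decay $|\cF_\eps|\le C(1\wedge|y|^{-k})$ coming from a tail estimate on $\eps B_{(t-r)/\eps^2}$, whereas you bring the $y$-integral inside the expectation by Fubini and use $g\in L^1$ directly --- both work.
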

 \begin{lemma}\label{l.clt} We have
\[
\lambda\sum_j \cX_j^\eps\Rightarrow \int_{\R^d} \cU(t,x)g(x)dx~~\hbox{ in distribution as $\eps\to0$.}
\]
Here, $\cU(t,x)$ is the solution of (\ref{e.ewshe}). 
\end{lemma}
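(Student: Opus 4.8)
The plan is to establish Lemma~\ref{l.clt} as a standard central limit theorem for a triangular array of independent, asymptotically negligible summands, combined with an identification of the limiting variance. First I would verify that the $\cX_j^\eps$ are genuinely independent: by construction $\tilde Z_t^\eps(r,y)$ for $r\in I_{\beta,j}$ depends only on $\dot W(s,\cdot)$ with $s\in(r-\eps^{-\alpha},r]\subset (\inf I_{\beta,j}-\eps^{-\alpha},\sup I_{\beta,j}]$, and since consecutive long intervals $I_{\beta,j}$ are separated by short intervals of length $\eps^{-\alpha}$, the windows of noise dependence for distinct $j$ are disjoint; disjointness of the supporting noise intervals gives independence of the stochastic integrals. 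Each $\cX_j^\eps$ is a mean-zero Gaussian-ish object (a single Wiener integral against $\tilde Z_t^\eps$ which itself is $\F$-measurable but not deterministic, so $\cX_j^\eps$ is a random variable with mean zero), and its variance is $\eps^{-(d-2)}\E\int_{I_{\beta,j}}\int_{\R^d}|\tilde Z_t^\eps(r,y)|^2\,dy\,dr$.

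Next I would invoke the Lindeberg/Lyapunov CLT for sums of independent random variables. The two inputs needed are: (i) the total variance $\sum_j \Var(\lambda\cX_j^\eps)=\lambda^2\eps^{-(d-2)}\E\int_{I_\beta}\int_{\R^d}|\tilde Z_t^\eps(r,y)|^2\,dy\,dr$ converges to a finite limit, and (ii) a negligibility condition such as $\max_j \Var(\cX_j^\eps)\to 0$ together with a uniform control of fourth moments (or a direct Lindeberg check). For (i), I would combine Lemma~\ref{l.err1}, Lemma~\ref{l.err2} and Lemma~\ref{l.varre}: Lemmas~\ref{l.err1}--\ref{l.err2} let me replace $\eps^{-(d-2)}\E\int_{I_\beta}|\tilde Z_t^\eps|^2$ by $\eps^{-(d-2)}\E\int_{-1}^{t/\eps^2}\int_{\R^d}|Z_t^\eps(r,y)|^2\,dy\,dr$ up to $o(1)$, and then Proposition~\ref{p.varcon} (the main technical result referenced earlier, governing the limit of the variance formula \eqref{e.varz}) identifies this as $\nu_{\mathrm{eff}}^2\int_0^t\int_{\R^d}(\text{heat-kernel convolution of }\bar u\text{ against }g)^2$, i.e. exactly $\E\big[(\int \cU(t,x)g(x)dx)^2\big]$ where $\cU$ solves \eqref{e.ewshe}. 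For (ii), since each $I_{\beta,j}$ has length $\eps^{-\beta}$ while the full interval has length $t\eps^{-2}$ and there are $O(\eps^{\beta-2})$ of them, the individual variances are $O(\eps^{-\beta}/\eps^{-2})=O(\eps^{2-\beta})\to 0$; a $p$-th moment bound on Wiener integrals (Burkholder--Davis--Gundy) plus the uniform $L^2(\Omega)$ bound on $u_\eps$ in the weak-disorder regime gives the Lyapunov condition.

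The remaining point is that a CLT for $\sum_j\lambda\cX_j^\eps$ with Gaussian limit requires the summands themselves to be conditionally Gaussian in the limit, or more precisely one must show the limit is Gaussian despite each $\cX_j^\eps$ being a Wiener integral of a \emph{random} integrand. Here I would use that $\tilde Z_t^\eps(r,y)$, after the truncation to the window $(r-\eps^{-\alpha},r]$ and the diffusive rescaling, concentrates: $\what\E_{B,t/\eps^2}[\cdots]$ averages over the tilted path and, by the homogenization/Markov-chain input, $Z_t^\eps(r,y)$ is well-approximated in $L^2(\Omega)$ by its conditional expectation given the ``recent past'' which in turn is close to a \emph{deterministic} function of $(r,y)$ times local white-noise data. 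Equivalently, one shows $\Var_\Omega(Z_t^\eps(r,y))$ is lower order, so that the stochastic integral $\lambda\eps^{-(d/2-1)}\int Z_t^\eps\,dW$ is asymptotically a Wiener integral of a deterministic kernel, hence exactly Gaussian in the limit; identification of that kernel with $\lambda\nu_{\mathrm{eff}}$ times the heat semigroup applied to $\bar u g$ is again Proposition~\ref{p.varcon}. I expect the main obstacle to be precisely this last step — showing that the integrand is asymptotically deterministic (equivalently, that higher chaos components vanish), which is where the $L^2$ second-moment computation of Lemma~\ref{l.varre} and the mixing properties of the path-increment Markov chain of Section~\ref{s.chain} do the real work; the CLT itself and the variance bookkeeping are then routine. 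I would close by assembling: independence $+$ Lyapunov $+$ variance convergence $+$ asymptotic Gaussianity of the integrand $\Rightarrow$ $\lambda\sum_j\cX_j^\eps\Rightarrow \cN(0,\sigma^2)$ with $\sigma^2=\E[(\int\cU(t,x)g(x)dx)^2]$, and since the limit is determined by its variance (Gaussian) this is the claimed convergence in distribution.
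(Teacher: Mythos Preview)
Your core plan---independence of the $\cX_j^\eps$, convergence of the total variance via Proposition~\ref{p.varcon}, and a Lindeberg/Lyapunov check through fourth moments---is exactly what the paper does. The paper expresses $s_{n,\eps}^2=\lambda^2\sum_j\Var[\cX_j^\eps]$ directly as $\lambda^2\int_0^t\int_{\R^d}1_{\{r/\eps^2\in I_\beta\}}\cF_\eps(r,y,\tfrac{1}{2\eps^\alpha},\tfrac{1}{2\eps^\alpha})\,dy\,dr$ and applies Proposition~\ref{p.varcon} to get convergence to \eqref{e.exvar}; it then verifies Lindeberg by bounding $\sum_j\E[|\cX_j^\eps|^21_{\{|\cX_j^\eps|>\delta\}}]\le\delta^{-1}\big(\sum_j\E[|\cX_j^\eps|^4]^{1/2}\big)\sup_j\E[|\cX_j^\eps|^2]^{1/2}$, using $\E[|\cX_j^\eps|^2]\lesssim\eps^{2-\beta}$ from Proposition~\ref{p.varcon} and a dedicated fourth-moment lemma (Lemma~\ref{l.bdsum4}) showing $\sum_j\E[|\cX_j^\eps|^4]^{1/2}\lesssim 1$. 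Your sketch of the Lyapunov step via BDG and the weak-disorder $L^2$ bound is the right intuition for that lemma.

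Your last paragraph, however, is a misconception that would send you on a needless detour. The Lindeberg--Feller CLT for triangular arrays of \emph{independent} mean-zero summands yields a Gaussian limit automatically once the total variance converges and the Lindeberg condition holds; there is no hypothesis that the summands be Gaussian or asymptotically Gaussian. The fact that each $\cX_j^\eps$ is a stochastic integral against a \emph{random} integrand $\tilde Z_t^\eps$ is irrelevant: what matters is only that $\tilde Z_t^\eps(r,\cdot)$ is $\F_r$-adapted (so $\cX_j^\eps$ is a bona fide It\^o integral with mean zero) and that the $\cX_j^\eps$ are independent across $j$, both of which you have already argued. You do not need to show that $\tilde Z_t^\eps$ is ``asymptotically deterministic'' or that ``higher chaos components vanish''; the paper does neither, and your proof is complete once Lindeberg is checked. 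Drop that paragraph.
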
}

\bigskip

\subsubsection*{The modification}

We first explain how the modification is done. Recall 
that 
\[
Z^\eps_t(r,y)=\int_{\R^d} g(x) \what{\E}_{B,{t}/{\eps^2}}
\Big[u_0(x+\eps B_{{t}/{\eps^2}})\Phi^\eps_{t,x,B}(r,y)
\exp\Big\{\lambda M^\eps_{t,x,B}(r)-\frac12\lambda^2\la M^\eps_{t,x,B}\ra_r\Big\} \Big] dx
\]
depends on  {$W$} only through the martingale in the exponent 
\begin{equation}\label{sep1430}
M_{t,x,B}^\eps(r)=\int_{-\infty}^r \int_{\R^d} \left(\int_0^{{t}/{\eps^2}}\phi(\frac{t}{\eps^2}-s'-s)\psi(\frac{x}{\eps}+B_{s'}-y)ds'\right)dW(s,y).
\end{equation}
Since $\phi$ is supported on $[0,1]$, the integration in $s'$ 
in (\ref{sep1430}) is 
only over $s'<t/\eps^2-s$ (in fact, over the interval
  $(t/\eps^2-s-1,t/\eps^2-s)$), so that 
\begin{equation}\label{sep1502}
  M_{t,x,B}^\eps(r)=\int_{-\infty}^r \int_{\R^d} \left(\int_{0}^{{t}/{\eps^2}-s}
\phi(\frac{t}{\eps^2}-s'-s)\psi(\frac{x}{\eps}+B_{s'}-y)ds'\right)dW(s,y).
\end{equation}
We expect that, because we deal with dimensions $d\geq 3$,
and therefore  the transience of Brownian motion yields
mixing, most of the contributions to 
$M_{t,x,B}^\eps(r)$ come from $s$ 
``macroscopically near'' $r$, so that~$0<r-s<\eps^{-\alpha}$, with some $\alpha\in(0,2)$.   
Thus, we set
\begin{equation}
r_\eps:=\frac{t}{\eps^2}-r+\frac{1}{2\eps^\alpha},
\end{equation}
and define the modification of $M_{t,x,B}^\eps(r)$ on $I_\beta$ as
\begin{equation}\label{e.defm}
\tilde{M}^\eps_{t,x,B}(r):=\int_{-\infty}^r \int_{\R^d} \left(\int_{0}^{r_\eps}
\phi(\frac{t}{\eps^2}-s'-s)\psi(\frac{x}{\eps}+B_{s'}-y)ds'\right)dW(s,y),~~~r\in I_\beta.
\end{equation}
Note that for $r\in I_\beta$, we have $r\geq \eps^{-\alpha}$, hence $r_\eps<{t}/{\eps^2}$. 
Due to the dependence of $r_\eps$ on $r$, $\tilde{M}_{t,x,B}^\eps$ is not a martingale. Still, with some abuse of 
notation, we  write 
\[
\la \tilde{M}_{t,x,B}^\eps\ra_r:=\int_{-\infty}^r \int_{\R^d} \left(\int_{0}^{r_\eps}\phi(\frac{t}{\eps^2}-s'-s)\psi(\frac{x}{\eps}+B_{s'}-y)ds'\right)^2 dyds.
\]
Note that if $s\leq r-\eps^{-\alpha}$, then
\[
\frac{t}{\eps^2}-s'-s\geq \frac{t}{\eps^2}-r_\eps-r+\eps^{-\alpha}=\frac{1}{2\eps^\alpha}>1,
\]
so the integrand in (\ref{e.defm}) vanishes.  
Thus, $\tilde{M}^\eps_{t,x,B}(r)$ only depends on ${dW}(s,\cdot)$ for  
$s\in(r-\eps^{-\alpha},r]$. The corresponding
modification of ${Z}_t^\eps(r,y)$ is 
\begin{equation}\label{e.deftildez}
\tilde{Z}_t^\eps(r,y):=\int_{\R^d} g(x) \what{\E}_{B,{t}/{\eps^2}}\left[u_0(x+\eps B_{{t}/{\eps^2}})\Phi^\eps_{t,x,B}(r,y)
\exp\Big\{\lambda \tilde{M}^\eps_{t,x,B}(r)-\frac12\lambda^2\la \tilde{M}^\eps_{t,x,B}\ra_r\Big\} \right] dx,
\end{equation}
 {which} also depends only on ${dW}(s,\cdot)$ for 
$s\in (r-\eps^{-\alpha},r]$, 
and the integrals $\{\cX_j^\eps\}$ defined in \eqref{e.defxj} are independent random variables.

\subsubsection*{Proof of the central limit theorem  \eqref{e.mainthCLT}}
 
Recall (\ref{e.varz}), written as
\begin{equation}\label{e.varzbis}
\begin{aligned}
&\frac{1}{\eps^{d-2}}\E\left[\int_{{t_1}/{\eps^2}}^{{t_2}/{\eps^2}} \int_{\R^d} |Z^\eps_t(r,y)|^2 dydr\right]= 
\int_{t_1}^{t_2}\int_{\R^d}\cF_\eps(r,y,\farc{r}{\eps^2},\frac{r}{\eps^2})dy dr.
\end{aligned}
\end{equation}
Here, for $r\in[0,t]$, $y\in\R^d$ and $M_1,M_2\leq{r}/{\eps^2}$,  we have set 
\begin{equation}
\cF_\eps(r,y,M_1,M_2):=\int_{\R^{2d}}\int_{[0,1]^2}\what{\E}_{B,{t}/{\eps^2}}\Big[\I_\eps e^{\J_\eps(M_1,M_2)}\Big]
\prod_{i=1}^2 \phi(s_i)\psi(x_i)ds_1ds_1dx_1dx_2,
\end{equation}
with $\I_\eps$ and $\J_\eps$ defined in (\ref{e.defJ1}) and (\ref{e.defJ2}), respectively.

We state the following proposition and postpone its proof to Section~\ref{s.pp}.
The function $\bar g(t,x)$  in the proposition
is the solution of the effective diffusion equation
\begin{equation}\label{e.heathom1}
\partial_t \bar{g}=\frac12\nabla\cdot \bfa_{\mathrm{eff}}\nabla \bar{g}, \   \  \bar{g}(0,x)=g(x),
\end{equation}
where $\bfa_{\mathrm{eff}}$ is as in \eqref{e.defaeff} below.
\begin{proposition}\label{p.varcon}
For any $r\in(0,t), y\in\R^d$, as $\eps\to0$ and $M_1,M_2\to\infty$, 
\begin{equation}\label{e.conf}
 \cF_\eps(r,y,M_1,M_2) \to \nu_{\mathrm{eff}}^2  |\bar{g}(t-r,y)\bar{u}(r,y)|^2,
\end{equation}
where $\bar{u},\bar{g}$ solve \eqref{e.heathom} and \eqref{e.heathom1}, and $\nu_{\mathrm{eff}}$ is defined in \eqref{e.defnu}. In addition, for any $k>0$,
\begin{equation}\label{e.bdf}
|\cF_\eps(r,y,M_1,M_2)|\leq C(1\wedge |y|^{-k})
\end{equation}
for some constant $C>0$ independent of $\eps,r,M_1,M_2$.
\end{proposition}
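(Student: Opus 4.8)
The plan is to prove Proposition~\ref{p.varcon} by unpacking the definition of $\cF_\eps$, recognizing the tilted expectation $\what\E_{B,t/\eps^2}$ as a Gibbs average over two independent polymer paths, and showing that in $d\geq 3$ the two paths decouple except for a localized interaction region near time $\frac{t-r}{\eps^2}$. Concretely, since $\psi$ is compactly supported, the factor $\prod_i\psi(x_i)$ confines $x_1,x_2$ to a bounded set, and $R_\psi(x_1-x_2+\Delta B^1-\Delta B^2)$ is nonzero only when the two (tilted) increments stay within $O(1)$ of each other. In $d\geq 3$, the transience of the polymer paths --- which holds in the $L^2$/weak-disorder regime for $\lambda<\lambda_0$ --- means that after the paths separate they never meet again with overwhelming probability, so the exponential $e^{\J_\eps(M_1,M_2)}$ factorizes (in the limit) into a local interaction term times terms that are absorbed into the normalization of $\what\E$.

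The key steps, in order, would be: (i) Rewrite $\what\E_{B,t/\eps^2}[\I_\eps e^{\J_\eps}]$ by conditioning on the first time the two path increments are ``close'' near $\frac{t-r}{\eps^2}$, and use the Markov chain on path increments constructed in Section~\ref{s.chain}, together with its Doeblin/mixing property, to show the law of the pair of paths restricted to the interaction window converges to a stationary object; this produces the constant $\nu_{\mathrm{eff}}^2$ as
\[
\nu_{\mathrm{eff}}^2 = \lim_{M_1,M_2\to\infty}\int_{\R^{2d}}\int_{[0,1]^2}\what\E\big[\I_\eps^{\mathrm{loc}}\, e^{\J_\eps(M_1,M_2)}\big]\prod_i\phi(s_i)\psi(x_i)\,d\bar s\,d\bar x,
\]
matching the formula in \eqref{e.defnu}. (ii) For the ``far'' parts of the two paths --- the increments on $[0,\frac{t-r}{\eps^2}-O(1)]$ and the past-$r$ pieces encoded in $\I_\eps$ via $u_0(\eps x_i + y + \eps\Delta B^i_{\cdot,t/\eps^2})$ --- apply the invariance principle: the diffusively rescaled tilted path converges to a Brownian motion with covariance $\bfa_{\mathrm{eff}}$ (cf.~\cite{betz2005central,mukherjee2017central}), so $g(\eps x_i + y - \eps B^i_{\cdots})\to g$ evaluated against the heat kernel gives $\bar g(t-r,y)$, and similarly $\E_B[u_0(\eps x_i + y + \eps\Delta B^i)]\to \bar u(r,y)$ (using that $u_0(\eps\cdot)$ varies on the macroscopic scale). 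Since the two paths are asymptotically independent away from the interaction region, the product over $i=1,2$ gives $|\bar g(t-r,y)\bar u(r,y)|^2$. (iii) For the tail bound \eqref{e.bdf}: $\I_\eps$ is bounded because $g$ is bounded, $u_0\in\C_b$, and $\psi$ is supported in a ball; the exponential $e^{\J_\eps}$ has $\what\E$-expectation uniformly bounded in $\eps,M_1,M_2$ precisely by the $L^2$ weak-disorder assumption; and the decay in $|y|$ comes from the two factors $g(\eps x_i + y - \eps B^i_{\cdots})$ --- on the event that $B^i$ is diffusive, $\eps B^i = O(1)$ while $y$ is large, so $g$ vanishes by its compact support, and the non-diffusive event has rapidly decaying probability (Gaussian tail of Brownian motion, preserved under the bounded tilt), yielding $1\wedge|y|^{-k}$ for every $k$.

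The main obstacle, I expect, is step (i): showing that the localized interaction between the two paths has a genuine limit and identifying it cleanly. The difficulty is that $\J_\eps(M_1,M_2)$ couples the two path increments through $R_\psi(x_1-x_2+\Delta B^1-\Delta B^2)$ over a window of $O(1)$ length in the original time scale but in a position that depends on $\eps$ through $\frac{t-r}{\eps^2}$, and one must simultaneously (a) take $\eps\to 0$ so that the approach of the paths to the interaction window is governed by the stationary Markov chain, (b) take $M_1,M_2\to\infty$ so that the interaction accumulates its full strength, and (c) control that these two limits commute and that the exponential moment $\what\E[e^{\J_\eps}]$ stays finite --- this last point is exactly where $\lambda<\lambda_0$ enters and where $\nu_{\mathrm{eff}}^2<\infty$ needs the weak-disorder bound. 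Making the decoupling of the ``far'' parts from the ``local'' part rigorous --- i.e., that conditioning on the interaction window does not distort the diffusive behavior of the remaining path segments --- will require the Doeblin condition from Section~\ref{s.chain} and is the technical heart of Section~\ref{s.pp}.
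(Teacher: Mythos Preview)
Your proposal is correct and follows essentially the same approach as the paper: decouple the localized interaction near $(t-r)/\eps^2$ (which yields $\nu_{\mathrm{eff}}^2$) from the diffusive ``far'' segments (which yield $|\bar g(t-r,y)\bar u(r,y)|^2$) via the Markov chain structure and Doeblin mixing, control the exponential $e^{\J_\eps}$ uniformly by the weak-disorder intersection bound, and get the decay in $|y|$ from the compact support of $g$ together with moment bounds on $\eps B_{(t-r)/\eps^2}$. One tactical point: rather than conditioning on the first time the paths are close, the paper first cuts off $\J_\eps(M_1,M_2)$ to $\J_\eps(M,M)$ for a fixed large $M$ (with error $\delta$ by transience), so the interaction depends only on a bounded time window, and then uses regeneration times on either side of that window to achieve the factorization --- this is cleaner than tracking a random first-meeting time and is what makes the decoupling in your Step~(i) rigorous.
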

{Next 
 we present the proofs of Lemmas 
 \ref{l.err1}, \ref{l.err2} and \ref{l.clt}, which
in turn imply \eqref{e.mainthCLT}.}

\begin{proof}[Proof of Lemma~\ref{l.err1}]
By Lemma~\ref{l.varre}, we have
\[
\begin{aligned}
\frac{1}{\eps^{d-2}}\int_{I_\beta} \int_{\R^d} \E[|Z_t^\eps(r,y)|^2] dydr=\int_0^t \int_{\R^d} 1_{\{{r}/{ \eps^2}\in I_\beta\}} \cF_\eps(r,y,\frac{r}{\eps^2},\frac{r}{\eps^2})dydr.
\end{aligned}
\]
The same calculation as in the proof of that lemma gives
\begin{equation}\label{sep1520}
\frac{1}{\eps^{d-2}}\int_{I_\beta} \int_{\R^d} \E[|\tilde{Z}_t^\eps(r,y)|^2] dydr=\int_0^t \int_{\R^d} 
1_{\{{r}/{ \eps^2}\in I_\beta\}} \cF_\eps(r,y,\frac{1}{2\eps^\alpha},\frac{1}{2\eps^\alpha})dydr,
\end{equation}
and
\begin{equation}\label{sep1522}
\frac{1}{\eps^{d-2}}\int_{I_\beta} \int_{\R^d} \E[Z_t^\eps(r,y)\tilde{Z}_t^\eps(r,y)] dydr=
\int_0^t \int_{\R^d} 1_{\{{r}/{ \eps^2}\in I_\beta\}} \cF_\eps(r,y,\frac{r}{\eps^2},\frac{1}{2\eps^\alpha})dydr.
\end{equation}
Indeed, the only required modification in replacing $M^\eps$ by $\tilde M^\eps$ is to replace the upper limit $t/\eps^2$
of integration in $s$ in (\ref{sep1514}) by $r_\eps$.  This leads to the same change of the upper limit of integration in $u$ in (\ref{sep1518}),
and in the expression for $J$ in (\ref{sep1426}). The changes of variables described below (\ref{sep1426}) then bring about (\ref{sep1520})
and (\ref{sep1522}). By Proposition~\ref{p.varcon}, the proof is complete, 
as (\ref{e.bdf}) allows us to apply 
the Lebesgue dominated convergence theorem.
\end{proof}


\begin{proof}[Proof of Lemma~\ref{l.err2}]
By Lemma~\ref{l.varre}, we have 
\[
\begin{aligned}
\frac{1}{\eps^{d-2}}\int_{I_\alpha} \int_{\R^d} \E[|Z_t^\eps(r,y)|^2] dydr=\int_0^t \int_{\R^{d}}
1_{\{{r}/{ \eps^2}\in I_\alpha\}} \cF_\eps(r,y,\frac{r}{\eps^2},\frac{r}{\eps^2})dydr.
\end{aligned}
\]
Note that when $t-r\leq \eps^2$, the expressions for 
$\I_\eps,\J_\eps$, as well as $\cF_\eps$ are
slightly different, see Remark~\ref{r.intedomain}. In this case, it is easy to check 
that Proposition~\ref{p.varcon} still holds. The uniform bound~(\ref{e.bdf}), 
as well as the fact that  
\[
|\{r\in[0,t]:~r/\eps^2\in I_\alpha\}|\to 0\hbox{ as $\eps\to 0$},
\]
 complete the proof.
\end{proof}

\begin{proof}[Proof of Lemma~\ref{l.clt}]
First,  it is easy to check that the solution of (\ref{e.ewshe}) satisfies
\begin{equation}\label{e.exvar}
\Var\Big[\int_{\R^d}\mathscr{U}(t,x)g(x)dx\Big]=\lambda^2\nu_{\mathrm{eff}}^2 \int_0^t\int_{\R^d} | \bar{g}(t-s,x)\bar{u}(s,x)|^2 dxds.
\end{equation}
Let 
\[
s_{n,\eps}^2=\lambda^2\sum_j\Var[\cX_j^\eps],
\]
then, by the same calculation as in the proofs of Lemma~\ref{l.varre} and~\ref{l.err1}, we have 
\[
s_{n,\eps}^2=\lambda^2\sum_j\int_0^t \int_{\R^d} 1_{\{{r}/{ \eps^2}\in I_{\beta,j}\}} 
\cF_\eps(r,y,\frac{1}{2\eps^\alpha},\frac{1}{2\eps^\alpha})dydr\to  \Var\Big[\int_{\R^d} \cU(t,x)g(x)dx\Big].
\]
The last step comes from Proposition~\ref{p.varcon} and \eqref{e.exvar}. 

Since $\cX_j^\eps$ are independent random variables, it remains 
to check the Lindeberg condition 
which reduces in our case to: for any $\delta>0$,
\begin{equation}\label{e.lindeberg}
\sum_j \E[|\cX_j^\eps|^2 1_{\{|\cX_j^\eps|>\delta}]\to0
\end{equation}
as $\eps\to0$. By the
Cauchy-Schwarz and Chebyshev inequality, we have 
\[
\sum_j \E[|\cX_j^\eps|^2 1_{\{|\cX_j^\eps|>\delta}] \leq \sum_j \sqrt{\E[|\cX_j^\eps|^4]} \sqrt{\E[|\cX_j^\eps|^2]/\delta^2}
\leq \frac{1}{\delta}  \Big(\sum_j \sqrt{\E[|\cX_j^\eps|^4]}\Big)\Big( \sup_j \sqrt{\E[|\cX_j^\eps|^2]}\Big).
\]
Proposition~\ref{p.varcon} implies that for all $j$ we have
\[
\E[|\cX_j^\eps|^2]=\int_0^t \int_{\R^d} 1_{\{{r}/{ \eps^2}\in I_{\beta,j}\}} 
\cF_\eps(r,y,\frac{1}{2\eps^\alpha},\frac{1}{2\eps^\alpha})dydr \les \eps^{2-\beta}.
\]
Lemma~\ref{l.bdsum4} proved in Appendix~\ref{sec:append-tech} shows that
\[
\sum_j \sqrt{\E[|\cX_j^\eps|^4]}\les1,
\]
and (\ref{e.lindeberg}) follows.
\end{proof}

\subsubsection*{Proof of the homogenization limit \eqref{e.conmean}}

The proof of (\ref{e.conmean}) is now straightforward.
We write 
\[
\int_{\R^d} u_\eps(t,x)e^{-\zeta_{t/\eps^2}}g(x)dx=\int_{\R^d} (u_\eps(t,x)-\E[u_\eps(t,x)])e^{-\zeta_{t/\eps^2}}g(x)dx
+\int_{\R^d} \E[u_\eps(t,x)]e^{-\zeta_{t/\eps^2}}g(x)dx.
\]
The first term goes to zero in probability by \eqref{e.mainthCLT}. For the second term, by Lemma~\ref{l.conmean} below, we have 
\[
\E[u_\eps(t,x)]e^{-\zeta_{t/\eps^2}}=\what{\E}_{B,{t}/{\eps^2}}[u_0(x+\eps B_{{t}/{\eps^2}})]\to \bar{u}(t,x),
\]
finishing the proof.~$\Box$ 

The rest of the paper is devoted to the proof of Proposition~\ref{p.varcon}, as well as the other auxiliary statements used in this section,
such as the technical lemmas in Appendix~\ref{sec:append-tech}.

\section{The tilted Brownian motion}
\label{s.chain}

The previous section relies on analyzing the expectations under the tilted measure
\[
\what{\E}_{B,{t}/{\eps^2}}[f(B)]=\E_B\Big[f(B) \exp\Big\{\frac12\lambda^2\int_{[0,{t}/{\eps^2}]}R(s-u,B_s-B_u)dsdu\Big\}\Big]
e^{-\zeta_{t/\eps^2}}.
\]
The goal of this section is to construct a Markov chain taking values in $\C([0,1])$ so that the tilted Brownian path 
on $\C([0,{t}/{\eps^2}])$ can be represented by the chain, and satisfies an invariance principle. 
We also analyze the intersection of two independent paths and show that the total ``intersection time'' has exponential tails.

\subsection{Construction of the Markov chain on $\C([0,1])$}\label{s.constructionchain}
For any $T>0$, let 
 \[
 \Omega_T=\{\omega: \omega \in \C([0,T]), \omega(0)=0\}
 \]
 be the configuration space. 
 Denoting
 the tilted measure by $\what{\Pb}_T$, and the Wiener measure by $\Pb_T$, 
 we have 
\begin{equation}\label{e.rndeT}
\frac{d\what{\Pb}_T}{d\Pb_T}(\omega)=\exp\Big\{\frac12\lambda^2\int_{[0,T]^2}R(s-u,\omega(s)-\omega(u))dsdu-\zeta_T\Big\}.
\end{equation}
Define the probability space $(\Omega,\mathcal{A},\pi)$ 
with $\Omega=\Omega_1$, $\mathcal{\A}$ the Borel sigma-algebra on $\Omega_1$, 
$\pi=\what{\Pb}_1$, and denote the expectation by $\E_\pi$. We will decompose the 
path of length $T$ into  increments of 
length $1$ which take values in $\Omega$. In order to consider the distribution of 
the path on $[t,t+1]$ for any~$t>0$,   
we introduce a parameter $\tau\in(0,1]$,  
set $N=[T-\tau]$,
and
divide the interval $[0,T]$ into $N+2$ 
subintervals~$(\tau_k,\tau_{k+1})$,~$k=0,\dots,N+1$, with
$\tau_0=0$, $\tau_1=\tau$, $\tau_{k+1}=\tau_k+1$ for $k=1,\dots,N$, 
and~$\tau_{N+2}=T$.
The increments of the path on $(\tau_k,\tau_{k+1})$ are
denoted by $\{x_k\}$, with $x_0\in\Omega_\tau$,  ~$x_k\subset \Omega$ for  $k=1,\dots,N+1$,
and $x_{N+1}\in \Omega_{T-\tau-N}$.   Given $x_k$, we define 
$\omega_s$, $0\le s\le T$, as
\begin{equation}\label{e.constructpath}
\omega_s=\left\{\begin{array}{ll}
x_0(s)  & s\in [0,\tau], \\ 
\omega_{\tau+k-1}+x_k(s-\tau-k+1) & s\in [\tau+k-1,\tau+k],  k=1,\ldots,N,\\
\omega_{\tau+N}+x_{N+1}(s-\tau-N) & s\in [\tau+N,T],
\end{array}
\right.
\end{equation}
and write 
\[
\{\omega_s\}=(x_0,\ldots,x_{N+1}).
\] 
For $t>0$ and $t\notin \Z_{\geq 1}$, we only need to choose $\tau=t-[t]$ so that $\{\omega_s: s\in[t,t+1]\}=x_k$ for some $k$.
Write
\begin{equation}\label{sep1708}
\begin{aligned}
&\int_{[0,T]^2}R(s-u,\omega(s)-\omega(u))dsdu=\sum_{k,m=0}^{N+1}Q_{km},~~
Q_{km}=\int_{\tau_k}^{\tau_{k+1}}\int_{\tau_m}^{\tau_{m+1}}
R(s-u,\omega(s)-\omega(u))dsdu.
 \end{aligned}
\end{equation}
Since $R(s,\cdot)=0$ when $|s|\geq1$, 
we only have  
nearest-neighbor interactions of $(x_0,\ldots,x_{N+1})$ 
in~\eqref{sep1708}:~$Q_{km}=0$ unless $|m-k|\le 1$, and
\begin{equation}\label{sep1709}
\begin{aligned}
&\int_{[0,T]^2}R(s-u,\omega(s)-\omega(u))dsdu=\sum_{k=0}^{N+1}Q_{kk}+2\sum_{k=0}^N Q_{k,k+1}.
\end{aligned}
\end{equation}
For $k=1,\dots,N$ and $0\le s\le 1$, we can write
\[
\omega(\tau_k+s)=\omega(\tau_k)+x_k(s),~~\omega(\tau_k+1+s)=\omega(\tau_k)+x_k(1)+x_{k+1}(s),
\]
so that
\begin{equation}\label{sep1710}
\begin{aligned}
Q_{k,k+1}&=\int_{\tau_k}^{\tau_{k}+1}\int_{\tau_k+1}^{\tau_{k}+2}R(s-u,\omega(s)-\omega(u))dsdu\\
&=\int_0^1\int_0^1 R(s+1-u,\omega(\tau_k+1+s)-\omega(\tau_k+u)) dsdu\\
&=\int_0^1\int_0^1 R(s+1-u,x_k(1)+x_{k+1}(s)-x_k(u)) dsdu.
\end{aligned}
\end{equation}
Thus, for $x,y\in \Omega$, we define the interaction term
\begin{equation}\label{e.defI}
I(x,y)=\lambda^2\int_0^1\int_0^1 R(s+1-u,y(s)+x(1)-x(u))dsdu.
\end{equation}
The interactions between $x_0,x_1$ and that 
of $x_{N},x_{N+1}$ are defined slightly differently  as 
\[
\begin{aligned}
&I_{0,1}(x_0,x_1)=\lambda^2\int_0^\tau du \int_{0}^{1}ds \ R(s+\tau-u,x_1(s)+x_0(\tau)-x_0(u)),\\
&I_{N,N+1}(x_N,x_{N+1})=\lambda^2\int_0^1 du\int_0^{T-\tau-N}ds \ R(s+1-u,x_{N+1}(s)+x_N(1)-x_N(u)).
\end{aligned}
\]
It is now straightforward to check that
\begin{equation}\label{sep1712}
\what{\Pb}_T(d\omega)\propto \ \what{\Pb}_{\tau}(dx_0)e^{I_{0,1}(x_0,x_1)}\prod_{k=1}^{N-1}\pi(dx_k)e^{I(x_k,x_{k+1})}
\pi(dx_N)e^{I_{N,N+1}(x_{N},x_{N+1})} \what{\Pb}_{T-\tau-N}(dx_{N+1}).
\end{equation}

The Krein-Rutman and Doob-Krein-Rutman theorems (see Appendix to Chapter VIII of~\cite{dautraylionsvol3}) imply
that there exist $\rho>0$ and $\Psi(y)$    solving the eigenvalue problem
\begin{equation}\label{e.coreq}
\int_{\Omega} e^{I(x,y)}\Psi(y)\pi(dy)=\rho \Psi(x),
\end{equation}
such that $\rho$ is the largest possible eigenvalue, 
\begin{equation}\label{sep1804}
0<c_1\le\Psi(y)\le c_2<+\infty~~\hbox{ for all $y\in\Omega$,}
\end{equation}
and $\Psi$ is the unique eigenvector associated with $\rho$,
normalized so that 
\begin{equation}\label{sep1808}
\int_{\Omega}\Psi(y)\pi(dy)=1.
\end{equation}
 Such an argument was also used in
   \cite{mukherjee2017central}. 
The bounds on $\rho$ and $\Psi$ only depend on $\|I\|_{L^\infty}$.
Indeed,~(\ref{sep1808}) implies that
\[
\rho=\int_{\Omega\times\Omega} e^{I(x,y)}\Psi(y)\pi(dx)\pi(dy),
\]
so we have 
\begin{equation}\label{sep1814}
e^{-\|I\|_\infty} \leq \rho\leq e^{\|I\|_\infty}.
\end{equation}
Since 
\[
\Psi(x)=\frac{1}{\rho}\int_{\Omega}e^{I(x,y)}\Psi(y)\pi(dy),
\]
we also have 
\begin{equation}\label{sep1816}
e^{-2\|I\|_\infty}\leq \Psi(x)\leq e^{2\|I\|_\infty}.
\end{equation}

Now we can re-write (\ref{sep1712}) as
\begin{equation}
\begin{aligned}
\what{\Pb}_T(d\omega)\propto &\  \what{\Pb}_{\tau}(dx_0)e^{I_{0,1}(x_0,x_1)}\Psi(x_1)\pi(dx_1)
\prod_{k=1}^{N-1}\hat{\pi}(x_k,dx_{k+1})
\frac{e^{I_{N,N+1}(x_{N},x_{N+1})}}{\Psi(x_N)} \what{\Pb}_{T-\tau-N}(dx_{N+1}),
\end{aligned}
\end{equation}
with the transition probability density 
\begin{equation}\label{e.trankernel}
\hat{\pi}(x,dy)=\frac{e^{I(x,y)}\Psi(y)\pi(dy)}{\rho\Psi(x)}.
\end{equation}
Setting
\[
\begin{aligned}
&f_{0,1}(x_0)=\int_{\Omega} e^{I_{0,1}(x_0,x_1)}\Psi(x_1)\pi(dx_1),
~~~
f_{N,N+1}(x_N)=\int_{\Omega_{T-\tau-N}} e^{I_{N,N+1}(x_{N},x_{N+1})} \what{\Pb}_{T-\tau-N}(dx_{N+1}),  
\end{aligned}
\]
and
\[
\begin{aligned}
&\hat{\pi}_{0,1}(x_0,dx_1)=\frac{e^{I_{0,1}(x_0,x_1)}\Psi(x_1)\pi(dx_1)}{f_{0,1}(x_0)}, 
~~
\hat{\pi}_{N,N+1}(x_N,dx_{N+1})=\frac{e^{I_{N,N+1}(x_{N},x_{N+1})} \what{\Pb}_{T-\tau-N}(dx_{N+1})}{f_{N,N+1}(x_N)},
\end{aligned}
\]
we  obtain
\begin{equation}\label{e.rnde2}
\begin{aligned}
\what{\Pb}_T(d\omega)\propto & \ f_{0,1}(x_0)\what{\Pb}_{\tau}(dx_0) \left(\hat{\pi}_{0,1}(x_0,dx_1)\prod_{k=1}^{N-1}\hat{\pi}(x_k,dx_{k+1}) \hat{\pi}_{N,N+1}(x_N,dx_{N+1})\right)
\frac{f_{N,N+1}(x_N)}{\Psi(x_N)}.
\end{aligned}
\end{equation}

Now, we construct the Markov chain $X_k$,  
with $X_0\in \Omega_{\tau}$, $\{X_k\}_{k=1}^N\subset\Omega$, 
and $X_{N+1}\in \Omega_{T-\tau-N}$, as follows:

(1) $X_0$ is sampled from the
(normalized) distribution $f_{0,1}(x_0)\what{\Pb}_{\tau}(dx_0)$,

(2) $(X_1,\ldots,X_{N+1})$ are sampled according to 
\[
\hat{\pi}_{0,1}(X_0,dx_1)\left(\prod_{k=1}^{N-1}\hat{\pi}(x_k,dx_{k+1})\right)\hat{\pi}_{N,N+1}(x_N,dx_{N+1}).
\]
 We construct the path $B$ by stitching together
all increments   as in \eqref{e.constructpath}:
\begin{equation}\label{e.BX}
B=\{B_s: s\in[0,T]\}=(X_0,\ldots,X_{N+1}).
\end{equation}
We use $\E_\pi$ to denote the expectation with respect to this Markov chain. In 
light of \eqref{e.rnde2}, for any~$F:\Omega_T\to\R$, 
we have the relation
\begin{equation}\label{sep1802}
\what{\E}_{B,T}[F(B)]:=\int_{\Omega_T} F(\omega)\what{\Pb}_T(d\omega)=
\E_\pi\left[F(B) c_{\tau,T}\frac{f_{N,N+1}(X_N)}{\Psi(X_N)}\right].
\end{equation}
Here, $c_{\tau,T}$ is the normalization constant:
\[
\frac{1}{c_{\tau,T}}:=\E_\pi\left[\frac{f_{N,N+1}(X_N)}{\Psi(X_N)}\right].
\]

Using (\ref{sep1814}) and (\ref{sep1816}),    
we see that the Doeblin condition is satisfied: there exists $\gamma\in(0,1)$ that depends only on $\|I\|_{L^\infty}$ such that 
\begin{equation}\label{e.doeblin}
\begin{aligned}
 \hat{\pi}(x,A)&\geq \gamma \pi(A)\\
\end{aligned}
\end{equation}
for all $x\in\Omega,A\in \mathcal{A}$. Therefore, there exists a unique invariant measure for $\hat{\pi}$, and
\begin{equation}\label{e.conexpfast}
d_{\mathrm{TV}}(X_k,\tilde{X})\les (1-\gamma)^k,
\end{equation} 
where $\tilde{X}$ is sampled from the invariant measure.

\subsubsection*{The two-component chain} 
To consider the interaction between two independent paths $B^1,B^2$,  we construct 
a two component Markov chain $Z_k=(X_k,Y_k)\in\Omega^2$ by sampling $X_k,Y_k$ independently. By the same discussion we have
\[
\begin{aligned}
B^1&=\{B^1_s: s\in[0,t/\eps^2]\}=(X_0,\ldots,X_{N_{\eps}+1}),\\
B^2&=\{B^2_s: s\in [0,t/\eps^2]\}=(Y_0,\ldots,Y_{N_{\eps}+1}),
\end{aligned}
\]
where $T=t/\eps^2$ and $N_\eps=[t/\eps^2-\tau]$. For any $F:\Omega_{t/\eps^2}\times \Omega_{t/\eps^2}\to \R$, we have 
\begin{equation}\label{e.rerk1}
\what{\E}_{B,{t}/{\eps^2}}[F(B^1,B^2)]=\E_{\pi}[F(B^1,B^2)\cG_{\eps}(X_{N_{\eps}})\cG_\eps(Y_{N_\eps})],
\end{equation}
with
\begin{equation}\label{e.rndeG}
\cG_{\eps}(X_{N_{\eps}}):=c_{\tau,{t}/{\eps^2}}\frac{f_{N_{\eps},N_{\eps}+1}(X_{N_{\eps}})}{\Psi(X_{N_{\eps}})}.
\end{equation}
Since both $f_{N_{\eps},N_{\eps}+1}$ 
and~$\Psi$ are bounded from above and below, 
and $\E_\pi[\cG_{\eps}(X_{N_\eps})]=1$, we know that $\cG_\eps$ is uniformly bounded in $\eps$.

For $k=2,\ldots,N_\eps$, $Z_k$ is sampled from $\hat{\bfpi}(Z_{k-1},dz_k)$ with 
\begin{equation}\label{e.2dtran}
\hat{\bfpi}(z_1,dz_2):=\hat{\pi}(x_1,dx_2)\hat{\pi}(y_1,dy_2), \   \ z_i=(x_i,y_i).
\end{equation}
As for the single-component chain, the Doeblin condition is satisfied 
for $Z_k$ as well:
\begin{equation}\label{e.doeblin-twocomp}
\begin{aligned}
\hat{\bfpi}(z,B)&\geq  \gamma  (\pi\times\pi)(B)
\end{aligned}
\end{equation}
for all $z\in\Omega^2,B\in \mathcal{A}\otimes\mathcal{A}$. After possibly decreasing the parameter $\gamma$, we can ensure that (\ref{e.doeblin}) 
and (\ref{e.doeblin-twocomp}) hold with the same~$\gamma\in(0,1)$. 

Writing 
\begin{equation}
\hat{\bfpi}(z_1,dz_2)=\gamma (\pi\times\pi)(dz_2)+(1-\gamma)\frac{\hat{\bfpi}(z_1,dz_2)-\gamma (\pi\times\pi)(dz_2)}{1-\gamma},
\end{equation}
we couple the two-component chain with a sequence of i.i.d. Bernoulli random variables $\eta_k$, $k\in\N$, with 
the parameter~$\gamma$: for $k=2,\ldots,N_\eps$, if $\eta_k=1$, we sample $Z_k$ from $(\pi\times \pi)(dz)$, and 
if $\eta_k=0$, we sample~$Z_k$ from 
\[
\frac{\hat{\bfpi}(Z_{k-1},dz)-\gamma (\pi\times\pi)(dz)}{1-\gamma},
\]
which is possible because of the Doeblin condition (\ref{e.doeblin-twocomp}). 
The same coupling works for the one-component chain, of course,
with the help of (\ref{e.doeblin}).
We enlarge the probability space so that $\eta_k $ are also defined on $(\Omega,\mathcal{A},\pi)$.

\subsection{The invariance principle for the tilted Brownian path}

We will use here the re-scaled version of (\ref{sep1802}): set
$T={t}/{\eps^2}$, $ N_{\eps}=[{t}/{\eps^2}-\tau]$,
and for any~$F:\Omega_{t/\eps^2}\to\R$ write 
\begin{equation}\label{e.rerk}
\what{\E}_{B,{t}/{\eps^2}}[F(B)]=\E_\pi\left[F(B)\cG_{\eps}(X_{N_{\eps}})\right],
\end{equation}
To simplify the notation, we kept the dependence on $\tau$ implicit in \eqref{e.rerk}.

We fix $\tau=1$ in this section, so that
 $N_\eps= [t/\eps^2]-1$,
\begin{equation}\label{e.BequalX}
B=\{B_s: s\in[0,t/\eps^2]\}=(X_0,\ldots,X_{N_\eps+1}).
\end{equation}
Here, $X_k$ is the increment of $B$ on $[k,k+1]$ for $k=0,\ldots,N_\eps$, and $X_{N_\eps+1}$ is the increment on 
the last interval $[[T],T]$. In this case, 
$X_0$ is sampled from $\Psi(x_0)\pi(x_0)$, 
$X_k$ is sampled from $\hat{\pi}(X_{k-1},dx_k)$ for $k=1,\ldots,N_\eps$, 
and $X_{N_\eps+1}$ is sampled from 
$\hat{\pi}_{N_\eps,N_\eps+1}(X_{N_\eps},dx_{N_\eps+1})$. 

For $k=1,\ldots,N_\eps$, 
we take 
independent Bernoulli random variables $\eta_k$ with 
parameter~$\gamma\in(0,1)$ as in the Doeblin condition, 
and  {consider} the regeneration {times} 
\begin{equation}
T_0=0, \   \ 
T_i=\inf\{j>T_{i-1}: \eta_j=1\}, \  \ i\geq 1.
\end{equation}
We define the path increment in each regeneration block as
 \[
\X_j:=\sum_{k=T_j}^{T_{j+1}-1}X_k(1), \   \  j=0,1,\ldots
\]
\begin{proposition}\label{p.clt}
For any $t>0$, 
\[
\eps B_{{s}/{\eps^2}}\Rightarrow  W_s
\]
in $\C([0,t])$ in $(\Omega,\A,\pi)$, where $W_s$ is a Brownian motion with the covariance matrix $\bfa_{\mathrm{eff}}$:
\begin{equation}\label{e.defaeff}
\bfa_{\mathrm{eff}}:=\gamma\E_\pi[\X_1\X_1^t].
\end{equation}
\end{proposition}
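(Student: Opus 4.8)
The plan is to prove Proposition~\ref{p.clt} by an application of the invariance principle for Markov chains with a Doeblin (uniform minorization) condition, using the regeneration structure already set up via the Bernoulli variables $\eta_k$. First I would observe that the regeneration times $T_i$ partition $\{0,1,\dots,N_\eps\}$ into i.i.d.\ blocks: since $\eta_1,\eta_2,\dots$ are i.i.d.\ Bernoulli$(\gamma)$ and, conditionally on $\eta_j=1$, the chain $X_j$ restarts from the fixed law $\pi$ independently of the past, the block increments $\X_j=\sum_{k=T_j}^{T_{j+1}-1}X_k(1)$ are i.i.d.\ for $j\geq 1$ (with the first block and the last partial block contributing only lower-order corrections that vanish after diffusive rescaling). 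The inter-regeneration gaps $T_{j+1}-T_j$ are geometric with parameter $\gamma$, hence have exponential tails; because each increment $X_k(1)$ is a Brownian increment over a unit time interval, reweighted by a bounded Radon--Nikodym factor built from $\Psi$ and $e^{I}$, one checks that $\X_j$ has moments of all orders (in fact sub-exponential tails), in particular finite second moment, so $\bfa_{\mathrm{eff}}:=\gamma\,\E_\pi[\X_1\X_1^t]$ is well defined and finite.

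Next I would reduce the functional convergence of $s\mapsto \eps B_{s/\eps^2}$ to a statement about the random walk $S_n:=\sum_{k=0}^{n-1}X_k(1)=B_n$ evaluated at integer times, plus a tightness/negligibility estimate for the fluctuation of $B$ within a single unit interval and within a single regeneration block. For the integer-time skeleton, I would write $B_n$ as a sum over completed regeneration blocks, $B_n\approx \sum_{j=1}^{K_n}\X_j$ where $K_n=\max\{j:T_j\leq n\}$, and apply the classical Donsker invariance principle for i.i.d.\ square-integrable vectors $\{\X_j\}_{j\geq 1}$: $\frac{1}{\sqrt m}\sum_{j=1}^{\lfloor m\cdot\rfloor}\X_j\Rightarrow \widetilde W$, a Brownian motion with covariance $\E_\pi[\X_1\X_1^t]$. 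The time change is controlled by the law of large numbers $K_n/n\to\gamma$ (since $\E[T_{j+1}-T_j]=1/\gamma$) together with the exponential tails of the gaps, which upgrades this to convergence uniformly on compacts; the resulting covariance of $\eps B_{\cdot/\eps^2}$ is then $\gamma\,\E_\pi[\X_1\X_1^t]=\bfa_{\mathrm{eff}}$ as claimed. Finally, the oscillation of $B$ inside the block straddling a given time $s/\eps^2$ is bounded in probability by the block length (geometric, hence $O(\log(1/\eps))$ with overwhelming probability) times the maximal Brownian oscillation over that length, which is $o(1/\eps)$; combined with a standard tightness criterion (e.g.\ a fourth-moment modulus-of-continuity bound on the increments of $\sum\X_j$, using finiteness of $\E_\pi[|\X_1|^4]$) this gives tightness in $\C([0,t])$ and shows the within-block fluctuations are negligible.

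The main technical point to handle carefully is the presence of the extra bounded weight $\cG_\eps(X_{N_\eps})$ in the representation~\eqref{e.rerk}: the measure $\pi$ on the Markov-chain side of~\eqref{e.rerk} is \emph{not} exactly the tilted path measure $\what\Pb_{t/\eps^2}$, but differs from it by the factor $\cG_\eps(X_{N_\eps})=c_{\tau,t/\eps^2}\,f_{N_\eps,N_\eps+1}(X_{N_\eps})/\Psi(X_{N_\eps})$, which depends only on the \emph{last} increment. Since this factor is uniformly bounded above and below (as noted right after \eqref{e.rndeG}, using \eqref{sep1804}) and affects only the terminal block, it perturbs the law of the path by a change of measure that is measurable with respect to a single block near time $t/\eps^2$; under diffusive rescaling this is asymptotically a change of measure supported at the single time-point $s=t$, which does not affect convergence in $\C([0,t))$ and, by a final density/continuity argument, not the convergence in $\C([0,t])$ either. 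I expect this last bookkeeping — propagating the Donsker limit under $\pi$ (where the chain is a clean i.i.d.-block process) to the genuinely tilted measure, and dealing with the non-stationary first and last blocks — to be the only real obstacle; everything else is the standard regeneration-plus-Donsker machinery, and the uniform bounds \eqref{sep1814}, \eqref{sep1816}, \eqref{e.doeblin}, together with the exponential mixing \eqref{e.conexpfast}, are exactly the inputs that make it go through.
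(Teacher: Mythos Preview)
Your core argument---regeneration via the Bernoulli coupling, Donsker for the i.i.d.\ block increments $\X_j$, the law of large numbers $T_n/n\to 1/\gamma$ for the random time change, and control of the within-block oscillation via exponential tails---is exactly the paper's proof, which cites Lemma~\ref{l.meanvar} for the moment/tail bounds and \cite[Theorem~14.4]{billingsley} for the time-changed Donsker step.

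Your third paragraph, however, is misplaced: Proposition~\ref{p.clt} is stated for convergence under the Markov-chain measure $\pi$ itself, not under $\what\Pb_{t/\eps^2}$, so the weight $\cG_\eps(X_{N_\eps})$ plays no role here and the paper's proof never mentions it. The passage from $\E_\pi[\cdot]$ to $\what\E_{B,t/\eps^2}[\cdot]=\E_\pi[\cdot\,\cG_\eps(X_{N_\eps})]$ is handled separately in Lemma~\ref{l.rnde} (using only boundedness of $\cG_\eps$ and the exponential mixing~\eqref{e.conexpfast}), and is invoked downstream in Lemma~\ref{l.conmean} and in Section~\ref{s.pp}, not in the invariance principle itself.
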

It is a straightforward computation to check that $\bfa_{\mathrm{eff}}$ in (\ref{e.defaeff}) does not depend on $\gamma$. In fact,
the right side of (\ref{e.defaeff}) can be written as 
\begin{equation}\label{sep2202}
\bfa_{\mathrm{eff}}=\lim_{n\to+\infty}\farc{1}{n} \E_\pi\left[(X_1(1)+\dots+X_n(1))(X_1(1)+\dots+X_n(1))^t\right].
\end{equation}

\begin{proof}
We show in Lemma \ref{l.meanvar} that $\X_1$ has 
zero mean and exponential tails, and further that  the random variables
\[
Z_i=\max_{s\in [T_i,T_{i+1}]}|B_s-B_{T_i}|,~~i\ge 0, 
\]
have exponential tails, and for $i\ge 1$ they are i.i.d.
From the first fact, one obtains
by Donsker's invariance
principle that 
\[
Y_n(t):=n^{-1/2} \sum_{j=1}^{[nt]}\X_i
\]
converges weakly to a Brownian motion with  
diffusivity $\E_\pi[\X_1\X_1^t]$. 
On the other hand, 
$T_n/n$ converges a.s. to $1/\gamma$, on 
account of the independence of the increments $T_i-T_{i-1}$  and the fact that 
they have  
mean $1/\gamma$ and are geometrically distributed. Setting 
\[
N_t^\eps=\max\{i: T_i<t/\eps^2\}-1,
\]
we deduce from \cite[Theorem 14.4]{billingsley} that the process
\[
\eps \sum_{i=1}^{N_t^\eps}\X_i
\]
converges in distribution
to a Brownian motion with the
diffusivity $\bfa_{\mathrm{eff}}$ given by (\ref{e.defaeff}).
On the other hand, we have
$$\max_{s\leq t} |\eps B_{s/\eps^2}-\eps \sum_{i=1}^{N_s^\eps}\X_i|
\les \eps \max_{i=1}^{N_t^\eps+1} |Z_i| \to_{\eps\to 0} 0\,, \quad a.s.,$$
because of the exponential tails of the $Z_i$. This completes the proof. 
\end{proof}

With the invariance principle, we can show the convergence of the average of
the solution. 

\begin{lemma}\label{l.conmean} We have
$\E[u_\eps(t,x)]e^{-\zeta_{t/\eps^2}}\to \bar{u}(t,x)$ as $\eps\to0$.
\end{lemma}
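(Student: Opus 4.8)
The plan is to reduce the statement to the invariance principle of Proposition~\ref{p.clt} together with the Doeblin mixing bound \eqref{e.conexpfast}. First, taking the expectation $\E$ over the white noise in the Feynman--Kac formula $u_\eps(t,x)=u(t/\eps^2,x/\eps)=\E_B\big[u_0(x+\eps B_{t/\eps^2})\exp\{\lambda\int_0^{t/\eps^2}V(t/\eps^2-s,x/\eps+B_s)\,ds\}\big]$ and using that, for a frozen Brownian path, $\int_0^{t/\eps^2}V(t/\eps^2-s,x/\eps+B_s)\,ds$ is a centered Gaussian with variance $\int_{[0,t/\eps^2]^2}R(s-u,B_s-B_u)\,ds\,du$ (the unconditioned version of the computation \eqref{sep1418}--\eqref{sep1422}), one obtains
\begin{equation}
\E[u_\eps(t,x)]\,e^{-\zeta_{t/\eps^2}}=\what{\E}_{B,t/\eps^2}\big[u_0(x+\eps B_{t/\eps^2})\big]=\E_\pi\big[u_0(x+\eps B_{t/\eps^2})\,\cG_\eps(X_{N_\eps})\big],
\end{equation}
the last equality being the representation \eqref{e.rerk} with $\tau=1$. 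Since $\bar u(t,x)$ admits the probabilistic representation $\bar u(t,x)=\E[u_0(x+W_t)]$, where $W$ is the Brownian motion of covariance $\bfa_{\mathrm{eff}}$ from Proposition~\ref{p.clt}, it suffices to show that $\E_\pi[u_0(x+\eps B_{t/\eps^2})\,\cG_\eps(X_{N_\eps})]\to\E[u_0(x+W_t)]$ as $\eps\to0$.

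The core step is to decouple the macroscopic endpoint $\eps B_{t/\eps^2}$ from the terminal weight $\cG_\eps(X_{N_\eps})$, which is a function of the single path increment $X_{N_\eps}$ only and is bounded uniformly in $\eps$. Fix a slowly growing cutoff $L_\eps=\lfloor\eps^{-1}\rfloor$, so that $\eps^2 L_\eps\to0$ while $(1-\gamma)^{L_\eps}\to0$. By the tightness in $\C([0,t])$ of $\{\eps B_{\cdot/\eps^2}\}$ provided by Proposition~\ref{p.clt}, the increment $\eps(B_{t/\eps^2}-B_{t/\eps^2-L_\eps})$ over a time window of rescaled length $\eps^2L_\eps\to0$ tends to $0$ in probability; since $u_0$ is bounded and continuous and $\|\cG_\eps\|_\infty\le C$, we may replace $\eps B_{t/\eps^2}$ by $\eps B_{t/\eps^2-L_\eps}$ in the expectation at a cost $o(1)$. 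Now condition on the $\sigma$-algebra $\sigma(X_0,\dots,X_{N_\eps-L_\eps})$, with respect to which $\eps B_{t/\eps^2-L_\eps}$ is measurable (after adjusting $L_\eps$ by a bounded amount). By the Markov property, $\E_\pi[\cG_\eps(X_{N_\eps})\mid\sigma(X_0,\dots,X_{N_\eps-L_\eps})]=\int\cG_\eps\,d\big(\hat\pi^{L_\eps}(X_{N_\eps-L_\eps},\cdot)\big)$, and by \eqref{e.conexpfast} this differs from $\bar{\cG}_\eps:=\int\cG_\eps\,d\mu_\infty$ (with $\mu_\infty$ the invariant law of $\hat\pi$) by at most $C\|\cG_\eps\|_\infty(1-\gamma)^{L_\eps}$, uniformly in the conditioning value. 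Hence
\begin{equation}
\E_\pi\big[u_0(x+\eps B_{t/\eps^2-L_\eps})\,\cG_\eps(X_{N_\eps})\big]=\bar{\cG}_\eps\,\E_\pi\big[u_0(x+\eps B_{t/\eps^2-L_\eps})\big]+O\big((1-\gamma)^{L_\eps}\big).
\end{equation}

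It remains to pass to the limit. First, $\bar{\cG}_\eps\to1$: since $\E_\pi[\cG_\eps(X_{N_\eps})]=1$ and $d_{\mathrm{TV}}(\mathrm{Law}(X_{N_\eps}),\mu_\infty)\le C(1-\gamma)^{N_\eps}\to0$, one has $|1-\bar{\cG}_\eps|\le C\|\cG_\eps\|_\infty(1-\gamma)^{N_\eps}\to0$. Second, $\E_\pi[u_0(x+\eps B_{t/\eps^2-L_\eps})]\to\E[u_0(x+W_t)]=\bar u(t,x)$ by Proposition~\ref{p.clt}, because $t/\eps^2-L_\eps=(t-\eps^2L_\eps)/\eps^2$ with $\eps^2L_\eps\to0$, i.e.\ one evaluates the converging path $\eps B_{\cdot/\eps^2}$ at times converging to $t$, and $u_0$ is bounded and continuous. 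Combining the three displays proves $\E[u_\eps(t,x)]e^{-\zeta_{t/\eps^2}}\to\bar u(t,x)$.

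I expect the decoupling step to be the main obstacle: one must argue that reweighting the path measure by a functional of the single terminal increment $X_{N_\eps}$ has no effect on the macroscopic endpoint in the $\eps\to0$ limit. This is precisely what the geometric mixing \eqref{e.conexpfast}, itself a consequence of the Doeblin condition \eqref{e.doeblin}, delivers; the small bookkeeping annoyances (the fractional part $\tau$, the last short increment $X_{N_\eps+1}$, and the fact that $\bar{\cG}_\eps$ depends on $\eps$) are all absorbed by the same uniform mixing estimates.
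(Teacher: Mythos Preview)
Your proof is correct and follows essentially the same route as the paper: reduce to $\what{\E}_{B,t/\eps^2}[u_0(x+\eps B_{t/\eps^2})]=\E_\pi[u_0(x+\eps B_{t/\eps^2})\cG_\eps(X_{N_\eps})]$, back off the endpoint by a mesoscopic time window so that the remaining functional depends only on $X_0,\dots,X_{N_\eps-L_\eps}$, decouple the bounded terminal weight $\cG_\eps(X_{N_\eps})$ via Doeblin mixing, and conclude by the invariance principle. The only noteworthy difference is in the first replacement step: the paper proves a direct second-moment bound $\E_\pi[|\eps B_{t_2/\eps^2}-\eps B_{t_1/\eps^2}|^2]\le C(t_2-t_1)$ from the regeneration structure, whereas you invoke tightness in $\C([0,t])$ (a consequence of Proposition~\ref{p.clt}) to get $\eps(B_{t/\eps^2}-B_{t/\eps^2-L_\eps})\to0$ in probability; your argument is softer but perfectly adequate here since $u_0\in\C_b$ and $\cG_\eps$ is uniformly bounded. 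Your inline decoupling argument is exactly the content of the paper's Lemma~\ref{l.rnde}.
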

\begin{proof}
We first show that
\begin{equation}\label{sep1820}
\E_\pi[|\eps B_{{t_2}/{\eps^2}}-\eps B_{{t_1}/{\eps^2}}|^2] 
\leq C (t_2-t_1)
\end{equation}
with a constant $C>0$ independent of $0\leq t_1<t_2\leq t$ and $\eps>0$. 
Define 
\[
K_{1,\eps}=\min\{i: \frac{t_1}{\eps^2}<T_i<\frac{t_2}{\eps^2}\}, \   \   K_{2,\eps}=\max\{i: \frac{t_1}{\eps^2}<T_i<\frac{t_2}{\eps^2}\},
\]
and if there is no regeneration time in 
$({t_1}/{\eps^2},{t_2}/{\eps^2})$, we define 
$T_{K_{1,\eps}}={t_1}/{\eps^2}$ and $T_{K_{2,\eps}}={t_2}/{\eps^2}$. We decompose
\[
\eps B_{{t_2}/{\eps}^2}-\eps B_{{t_1}/{\eps^2}}=
(\eps B_{T_{K_{1,\eps}}}-\eps B_{{t_1}/{\eps^2}})+
(\eps B_{T_{K_{2,\eps}}}-\eps B_{T_{K_{1,\eps}}})+
(\eps B_{{t_2}/{\eps^2}}-\eps B_{T_{K_{2,\eps}}})
:=I_1+I_2+I_3.
\]
For $I_2$, we write 
\[
I_2=\eps \sum_{j=K_{1,\eps}}^{K_{2,\eps}-1} \X_j,
\]
and, conditioning on all the regeneration times, denoted by $\{T_i\}$,  
we obtain
\[
\begin{aligned}
\E_\pi[|I_2|^2 \ | \ \{T_i\}] = \eps^2 
\sum_{j=K_{1,\eps}}^{K_{2,\eps}-1} \E_\pi[\X_j^2| \{T_i\}].
\end{aligned}
\]
Here, we used the fact that $\X_j$ are independent with zero 
mean conditioning on $\{T_i\}$. By Lemma~\ref{l.meanvar}, we have
\[
\E_\pi[\X_j^2|\{T_i\}]\les (T_{j+1}-T_j)^2.
\]
As $K_{2,\eps}-K_{1,\eps}\leq \frac{t_2-t_1}{\eps^2}$, it follows that
\[
\begin{aligned}
\E_\pi[|I_2|^2 ] \les\eps^2 \E_\pi
\sum_{j=K_{1,\eps}}^{K_{2,\eps}-1}(T_{j+1}-T_j)^2\le 
C\eps^2\frac{t_2-t_1}{\eps^2}=C(t_2-t_1).
\end{aligned}
\]
Estimating the terms $I_1$ and $I_3$ is also straightforward using
Lemma~\ref{l.meanvar}, finishing the proof of~(\ref{sep1820}).

Next, note that by \eqref{e.rerk}, we have 
\[
\begin{aligned}
\E[u_\eps(t,x)]e^{-\zeta_{t/\eps^2}}=&\what{\E}_{B,{t}/{\eps^2}}[u_0(x+\eps B_{{t}/{\eps^2}})]
=\E_\pi[u_0(x+\eps B_{{t}/{\eps^2}})\cG_\eps(X_{N_\eps})]\\
=&\E_\pi[u_0((x+\eps B_{{t}/{\eps^2}-{1}/{\eps}})+\eps ( B_{{t}/{\eps^2}}-B_{{t}/{\eps^2}-{1}/{\eps}}))
\cG_\eps(X_{N_\eps})].
\end{aligned}
\]
Using (\ref{sep1820}), 
it suffices to consider 
\[
\E_\pi[u_0(x+\eps B_{{t}/{\eps^2}-1/\eps}) \cG_\eps(X_{N_\eps})].
\]
We apply Lemma~\ref{l.rnde} and Proposition~\ref{p.clt} to see that
\[
\E_\pi[u_0(x+\eps B_{{t}/{\eps^2}-{1}/{\eps}})\cG_\eps(X_{N_\eps})]-
\E_\pi[u_0(x+\eps B_{{t}/{\eps^2}-{1}/{\eps}})]\to0,
\]
and 
\[
\E_\pi[u_0(x+\eps B_{{t}/{\eps^2}-{1}/{\eps}})]\to\bar{u}(t,x),
\]
which completes the proof.
\end{proof}

\subsection{Intersection of independent paths}

The previous section shows that the tilted Brownian path behaves like a Brownian motion with an effective diffusivity, and this has been used to prove the convergence of 
\[
\begin{aligned}
\int_{\R^d} \E[u_\eps(t,x)]e^{-\zeta_{t/\eps^2}}g(x)dx.
\end{aligned}
\]
To control the variance of 
\[
\int_{\R^d} u_\eps(t,x)e^{-\zeta_{t/\eps^2}} g(x)dx,
\]
it is necessary to consider two independent tilted Brownian paths. We will show 
that the two paths can not intersect too 
much 
 -- this is the goal of this 
section and is only true  in dimensions $d\geq 3$. In fact, the proof of Proposition~\ref{p.expintegrability} below is the only place in the paper where we explicitly use the condition $\lambda\ll1$ and $d\geq 3$. Proposition~\ref{p.clt} and Lemma~\ref{l.conmean} hold in all dimensions and for all coupling constants.

For the sake of simplicity of 
presentation, we consider here only a homogeneous chain, 
assuming that $t/\eps^2$ is an integer, to avoid dealing
with the last step of the chain that has a different law. A~modification for a general $t$ is straightforward.  
Given any~$Z_0=(X_0,Y_0)\in \Omega^2$, we generate the chain 
$Z_k=(X_k,Y_k)$
according to the transition kernel~$\hat{\bfpi}$ defined in~\eqref{e.2dtran}.
The two components $X_k$ and $Y_k$ generate
two paths, that we denote by $\omega_{X_0},\omega_{Y_0}\in \C([0,\infty))$, 
via~\eqref{e.constructpath}. We recall that the regeneration times 
are defined as 
\begin{equation}
T_0=0, \   \ 
T_i=\inf\{j>T_{i-1}: \eta_j=1\}, \  \ i\geq 1,
\end{equation}
where $\eta_j$ are i.i.d Bernoulli random variables with 
parameter $\gamma\in(0,1)$.
    
Throughout the section, $X_0,Y_0$ are fixed, so we simply write $\pi[\cdot|X_0,Y_0]=\pi[\cdot]$. Define 
\[
\ell(x,y,X_0,Y_0)=\int_0^\infty1_{\{|x+\omega_{X_0}(s)-y-\omega_{Y_0}(s)|\leq 1\}}ds
\]
as the total ``nearby  time'' 
of $\omega_{X_0}$ and $\omega_{Y_0}$. We have the following result.

\begin{proposition}\label{p.expintegrability}
In $d\geq 3$, there exist constants $C_1,C_2>0$ such that 
\begin{equation}\label{e.insetimetail}
\sup_{x,y\in\R^d}\sup_{X_0,Y_0\in\Omega}
\pi[\ell(x,y,X_0,Y_0)>t] \leq C_1e^{-C_2 t}.
\end{equation}
As a consequence, if $\lambda<C_2$, then
\[
\sup_{x,y\in\R^d} \sup_{X_0,Y_0\in\Omega}\E_\pi[e^{\lambda \ell(x,y,X_0,Y_0)}]<\infty.
\]
\end{proposition}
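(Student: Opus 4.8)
The plan is to show that the ``nearby time'' $\ell$ accumulates geometrically over regeneration blocks, so that it has exponential tails, and then derive the exponential-moment bound by a standard inequality. First I would decompose $\ell$ along the regeneration times: write $\ell(x,y,X_0,Y_0)=\sum_{i\ge 0}L_i$, where $L_i=\int_{T_i}^{T_{i+1}}\1_{\{|x+\omega_{X_0}(s)-y-\omega_{Y_0}(s)|\le 1\}}ds$ is the nearby time accumulated in the $i$-th regeneration block. Each $L_i$ is bounded above by $T_{i+1}-T_i$, which is geometrically distributed, so each $L_i$ has exponential tails; the content is that the \emph{total} sum is still exponentially tight, which requires showing that the displacement $D_i:=(x+\omega_{X_0}(T_i)) - (y+\omega_{Y_0}(T_i))$ between the two paths tends to leave the unit ball and, once it is far, the probability of ever returning to be ``nearby'' is bounded away from $1$ uniformly. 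Since the two-component chain $Z_k=(X_k,Y_k)$ regenerates at the times $T_i$ (with $Z_{T_i}$ sampled from $\pi\times\pi$, independent of the past by the Doeblin coupling in~\eqref{e.doeblin-twocomp}), conditional on $\{T_i\}$ the post-$T_i$ difference path is, up to the fixed shift $D_i$, an i.i.d.\ copy of a centered random walk difference; and the key geometric input is the transience in $d\ge 3$, which after rescaling is Proposition~\ref{p.clt} (the tilted path converges to a nondegenerate Brownian motion, hence its range has the right dimension).

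Concretely, the main step is a ``renewal with killing'' argument: I would show there is a constant $p<1$ and a length scale $K$ (a number of regeneration blocks) such that, for any starting difference $D$ and any realization of the regeneration structure, the conditional probability that the difference path returns to $\{|\cdot|\le 1\}$ after $K$ further regeneration blocks is at most $p$. Equivalently, letting $N=\#\{i: D_i\in \text{unit ball}\}$ be the number of ``bad'' blocks, one gets $\pi[N> n]\le C p^{n/K}$, i.e.\ $N$ has exponential tails uniformly in $(x,y,X_0,Y_0)$; and since $\ell \le \sum_{i: D_i \text{ bad, or within range-}1\text{ of bad}} (T_{i+1}-T_i)$ is dominated by a sum of $O(N)$ independent geometric block-lengths, $\ell$ has exponential tails, giving~\eqref{e.insetimetail}. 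To prove the uniform return bound I would use the invariance principle of Proposition~\ref{p.clt}: the difference of two independent rescaled tilted paths converges to a Brownian motion with covariance $2\bfa_{\mathrm{eff}}$, which is transient in $d\ge 3$, so the probability that a Brownian motion started at scale $\delta$ ever enters a ball of radius $\eps\delta$ is $O((\eps)^{d-2})<1$; discretizing and transferring this to the chain via the invariance principle (plus the exponential tails of the block-lengths $T_{i+1}-T_i$ and of the within-block oscillations $Z_i=\max_{[T_i,T_{i+1}]}|B_s - B_{T_i}|$ from Lemma~\ref{l.meanvar}) yields the required uniform-in-starting-point bound.

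Finally, the exponential-moment statement follows from the tail bound by the elementary identity $\E_\pi[e^{\lambda \ell}] = 1 + \lambda\int_0^\infty e^{\lambda t}\,\pi[\ell>t]\,dt$, which is finite as soon as $\lambda<C_2$; this is where the smallness condition $\lambda\ll 1$ enters (it is exactly the requirement $\lambda<C_2$ with $C_2$ the decay rate from~\eqref{e.insetimetail}). The main obstacle I anticipate is making the uniform-in-$(x,y,X_0,Y_0)$ return estimate genuinely uniform: the invariance principle of Proposition~\ref{p.clt} is a statement about a fixed initial law, so to get a bound that does not degrade as the starting difference $D$ grows I would need either (i) a quantitative, non-asymptotic Green's-function-type estimate for the chain — controlling $\sum_k \pi[\,|{\rm difference\ at\ step}\ k}|\le 1\,]$ uniformly in the start — obtained by comparing the $k$-step transition density of the (regenerated, hence essentially i.i.d.-incremented) difference walk to a Gaussian and summing the resulting $O(k^{-d/2})$ bound, which converges precisely because $d\ge 3$; or (ii) a direct coupling/reflection argument at the level of the discrete chain. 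I expect route (i), exploiting that after each regeneration the increment law is fixed and the displacement over $K$ blocks has a density bounded like a Gaussian on the relevant scale, to be the cleanest, with the exponential tails of $T_{i+1}-T_i$ and $Z_i$ from Lemma~\ref{l.meanvar} doing the work of controlling the ``bad'' events where a single block is anomalously long.
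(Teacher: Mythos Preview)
Your overall strategy matches the paper's: prove a uniform-in-$(x,y,X_0,Y_0)$ bound that the probability of any further intersection after sufficiently many regeneration blocks is strictly less than $1$, then iterate via the Markov property to get geometric decay. Your route~(i) is exactly what the paper does --- it applies Stone's local limit theorem to the i.i.d.\ increments $\X_j-\Y_j$ to get $\pi[\,|D_k|\le k^\alpha\,]\lesssim k^{\alpha d-d/2}$, which is summable for $\alpha<\tfrac12-\tfrac1d$ precisely when $d\ge 3$, and then controls within-block excursions via the exponential tails of Lemma~\ref{l.meanvar}. You are also right that the invariance principle by itself would not give the needed uniformity. So the key analytic input is the same.

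Where your proposal has a genuine gap is in the iteration step. Your bookkeeping via $N=\#\{i:D_i\in B(0,1)\}$ does not control $\ell$ as written: a block can contribute to $\ell$ even when $D_i\notin B(0,1)$ (the difference may wander into the unit ball during the block), and the phrase ``within range-$1$ of bad'' is not a well-defined fix. More seriously, even with a correct notion of ``bad block'', the claim that $\ell$ is dominated by ``a sum of $O(N)$ independent geometric block-lengths'' fails because the random set of bad indices is not independent of the block-lengths $T_{i+1}-T_i$: the displacement accrued in block $i$, and hence whether $D_{i+1}$ is bad, depends on how long block $i$ is. The paper sidesteps both issues cleanly. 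Having shown $\pi[\ell>K]<\tfrac12$ for a fixed $K$ (this is its Step~1, corresponding to your return estimate), it defines integer-valued stopping times
\[
\tau_k=\min\Big\{n>\tau_{k-1}:\int_{\tau_{k-1}}^{n+1}\1_{\{|x+\omega_{X_0}(s)-y-\omega_{Y_0}(s)|\le 1\}}\,ds>K\Big\}
\]
for the chain itself and applies the strong Markov property at each $\tau_k$, obtaining $\pi[\ell>t]\le\pi[\tau_{[t/K]}<\infty]\le(\tfrac12)^{[t/K]}$ directly. This avoids having to count bad blocks or disentangle the dependence between block-lengths and the path.
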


\begin{proof}
The proof is divided into two steps. 

\emph{Step 1}. We show that there exists $K>0$ such that 
\begin{equation}\label{e.pr12}
\pi[\ell(x,y,X_0,Y_0)>K]<\frac12
\end{equation}
for all $x,y,X_0,Y_0$. Since
\[
\pi[\ell(x,y,X_0,Y_0)>K]\leq \pi[\ell(x,y,X_0,Y_0)>T_N]+\pi[T_N>K],
\]
with $T_N$ the $N-$th regeneration time, we only need to show
\begin{equation}\label{e.pr14}
 \pi[\ell(x,y,X_0,Y_0)>T_N]<\frac14
\end{equation}
for some $N$ independent of $x,y,X_0,Y_0$, and choose $K$ so
large that $\pi[T_N>K]<1/4$. To this end, it suffices to show 
that
\begin{equation}\label{sep1826}
\pi[E_N]<\frac14,~~\hbox{where}~~E_N=\big\{
\min_{s\geq T_N}|x+\omega_{X_0}(s)-y-\omega_{Y_0}(s)|\leq 1\big\}.
\end{equation}
Recall that 
\[
\omega_{X_0}(T_k)-\omega_{Y_0}(T_k)= 
\sum_{j=0}^{k-1} ( \X_j-\Y_j),~~~
\X_j-\Y_j=\sum_{i=T_j}^{T_{j+1}-1}[X_i(1)-Y_i(1)],~~\   \  k\geq 1.
\]
By the regeneration structure, $\X_j-\Y_j$ are i.i.d. random variables 
and are  also independent of~$\X_0-\Y_0$. 
For any $\alpha>0$, define 
\[
A_k=\{|x+\omega_{X_0}(T_k)-y-\omega_{Y_0}(T_k)| \leq k^\alpha\},
~~~A(N)=\bigcup_{k\geq N}A_k,
\]
and write
\[
\begin{aligned}
\pi[E_N]\leq &\pi[A(N)]+\pi[E_N\cap A(N)^c ] 
\leq &\sum_{k=N}^\infty \pi[A_k]+\pi[E_N\cap A(N)^c].
\end{aligned}
\]
By the local limit theorem in \cite[Theorem on p. 1]{stone1965local}, we have 
\begin{equation}\label{sep1902}
\pi[A_k] \leq C \frac{k^{\alpha d}}{k^{{d}/{2}}}=\frac{C}{k^{(\frac12-\alpha)d}}
\end{equation}
for some constant $C$ independent of $x,y,X_0,Y_0$. Thus, we can 
choose $\alpha<1/2-{1}/{d}$ (in $d\geq 3$) and~$N$ so large that  
\[
\sum_{k=N}^\infty \pi[A_k]<\frac18.
\]
On the other hand, we have
\[
\pi[E_N\cap A(N)^c ] \leq \sum_{k\geq N} \pi[B_k ],
\]
with 
\[
B_k:=A_k^c\cap \{ \min_{s\in[T_k,T_{k+1}]}|x+\omega_{X_0}(s)-y-\omega_{Y_0}(s)| \leq 1\},
\]
and $B_k\subset B_{k,X}\cup B_{k,Y}$ with 
\[
B_{k,X}=\left\{ \sum_{i=T_k}^{T_{k+1}-1} \max_{s\in [0,1]} |X_i(s)| >\frac{k^\alpha}{3}\right\},  \   \   B_{k,Y}=\left\{ \sum_{i=T_k}^{T_{k+1}-1} \max_{s\in [0,1]} |Y_i(s)| >\frac{k^\alpha}{3}\right\}.
\]
By Lemma~\ref{l.meanvar}, the random variable 
\[
\sum_{i=T_k}^{T_{k+1}-1} \max_{s\in [0,1]} |X_i(s)|
\]
has an exponential tail, which implies
that
\[
\pi[E_N\cap A(N)^c] \leq \sum_{k\geq N} e^{-Ck^\alpha}<\frac18
\]
 when $N$ is large. The proof of \eqref{e.pr14} is complete.

\emph{Step 2}. We define a sequence of stopping times as follows: $\tau_0=0$ and 
\[
\begin{aligned}
\tau_k=\min\Big\{n> \tau_{k-1}: 
\int_{\tau_{k-1}}^{n+1} 1_{\{|x+\omega_{X_0}(s)-y-\omega_{Y_0}(s)|\leq 1\}}ds>K
\Big\}, \  \ k\geq 1,
\end{aligned}
\]
with  $K$ chosen as in step 1. 
Let $n=[{t}/{K}]$, and  apply \eqref{e.pr12} to obtain
\[
\pi[\ell(x,y,X_0,Y_0)>t] \leq \pi[\tau_n <\infty] 
=\pi[\tau_n<\infty|\tau_1<\infty]\pi[\tau_1<\infty]\leq 
\frac12\pi[\tau_n<\infty |\tau_1<\infty].
\]
We  consider
\[
\begin{aligned}
\pi[\tau_2<\infty |X_{\tau_1},Y_{\tau_1}]=
\pi\Big[\int_{\tau_1}^\infty 1_{\{|x+\omega_{X_0}(s)-y-\omega_{Y_0}(s)|\leq 1\}}ds
>K |X_{\tau_1},Y_{\tau_1}\Big],
\end{aligned}
\]
and write for $s\ge\tau_1$:
\[
x+\omega_{X_0}(s)-y-\omega_{Y_0}(s)=
x+\omega_{X_0}(\tau_1)+[\omega_{X_0}(s)-\omega_{X_0}(\tau_1)]
-y-\omega_{Y_0}(\tau_1)-[\omega_{Y_0}(s)-\omega_{Y_0}(\tau_1)]. 
\]
Conditioning on $X_{\tau_1},Y_{\tau_1}$ gives
\[
\begin{aligned}
(\omega_{X_0}(\tau_1+\cdot )-\omega_{X_0}(\tau_1),\omega_{Y_0}(\tau_1+\cdot)-\omega_{Y_0}(\tau_1)) 
\stackrel{\text{law}}{=}  (\tilde{\omega}_{X_{\tau_1}}(\cdot ),\tilde{\omega}_{Y_{\tau_1}}(\cdot )),
\end{aligned}
\]
where $\tilde{\omega}$ is independent of $\omega$.
Hence, we may apply \eqref{e.pr12} again to get 
\[
\begin{aligned}
&\pi\Big[
\int_{\tau_1}^\infty 1_{\{|x+\omega_{X_0}(s)-y-\omega_{Y_0}(s)|\leq 1\}}ds >K |
X_{\tau_1},Y_{\tau_1}\Big]\\
&=\pi\Big[\int_0^\infty 1_{\{x+\omega_{X_0}(\tau_1)
+\tilde{\omega}_{X_{\tau_1}}(s)-y-\omega_{Y_0}(\tau_1)
-\tilde{\omega}_{Y_{\tau_1}}(s)\}}ds>K|X_{\tau_1},Y_{\tau_1}\Big]<\frac12
\end{aligned}
\]
uniformly in $x,y,X_{\tau_1},Y_{\tau_1}$. 
Iterating the same argument gives 
\[
 \pi[\ell(x,y,X_0,Y_0)>t ] \leq \left(\frac12\right)^2 \pi[\tau_n<\infty |\tau_2<\infty]
 \leq \ldots \leq \left(\frac12\right)^n,
\]
 which completes the proof.
\end{proof}

\begin{corollary}\label{c.intersection}
In $d\geq 3$, there exists $\lambda_0$ only depending on $\phi,\psi$ such that for $\lambda<\lambda_0$, we have
\[
\sup_{x,y\in\R^d}\sup_{(X_0,Y_0)\in\Omega^2}\E_\pi\Big[
\exp\Big\{\lambda \int_0^\infty\int_0^\infty  
R_\phi(u_1,u_2)R_\psi(x-y+\omega_{X_0}(u_1)-\omega_{Y_0}(u_2))du_1du_2\Big\} \Big]<\infty.
\]
\end{corollary}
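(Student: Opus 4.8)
The plan is to reduce the exponential moment of the double integral to that of a ``nearby time'' of the type controlled in Proposition~\ref{p.expintegrability}. Write $J=J(x,y,X_0,Y_0)$ for the integral appearing in the braces of the statement. Since $\phi$ and $\psi$ are bounded, nonnegative and compactly supported, $R_\phi$ is bounded and vanishes unless $|u_1-u_2|\le1$, while $R_\psi$ is bounded and supported in $\{|z|\le1\}$; hence
\[
J\ \le\ \|R_\phi\|_\infty\|R_\psi\|_\infty\,\tilde J,\qquad
\tilde J:=\int_0^\infty\!\!\int_0^\infty 1_{\{|u_1-u_2|\le1\}}\,1_{\{|x-y+\omega_{X_0}(u_1)-\omega_{Y_0}(u_2)|\le1\}}\,du_1\,du_2 ,
\]
where $\|R_\phi\|_\infty\|R_\psi\|_\infty$ depends only on $\phi,\psi$. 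Thus it suffices to show that $\sup_{x,y}\sup_{X_0,Y_0}\E_\pi[e^{\mu\tilde J}]<\infty$ for all $\mu$ below some constant $\mu_0>0$, and then take $\lambda_0=\mu_0/(\|R_\phi\|_\infty\|R_\psi\|_\infty)$.

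To control $\tilde J$ I would repeat the two-step argument used for Proposition~\ref{p.expintegrability}. The only genuinely new point is the unit lag between $u_1$ and $u_2$, which prevents dominating $\tilde J$ directly by $\ell(x,y,X_0,Y_0)$: if the inner indicator in $\tilde J$ equals $1$ then $|u_1-u_2|\le1$ and, writing $u=u_1$,
\[
|x+\omega_{X_0}(u)-y-\omega_{Y_0}(u)|\ \le\ 1+\sup_{|v|\le1}|\omega_{Y_0}(u+v)-\omega_{Y_0}(u)| ,
\]
so $\tilde J$ is dominated by the Lebesgue time that the two paths spend within distance $1$ of each other \emph{after inflating the radius by the oscillation of a path over a two-unit window}. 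The oscillation of either path over a bounded interval has exponential tails, uniformly over the starting configuration, by Lemma~\ref{l.meanvar} (alternatively from $\hat{\pi}(x,\cdot)\le C\,\pi(\cdot)$ together with the Gaussian tails of $\max_{s\in[0,1]}|x_0(s)|$ under $\pi$). This is exactly the kind of term already handled in Step~1 of the proof of Proposition~\ref{p.expintegrability}, where the radii of the events $A_k,B_k$ are allowed to grow like $k^\alpha$: the inflation exceeds $k^\alpha$ only on an event of probability $\le e^{-ck^\alpha}$, so the local limit theorem bound $\pi[A_k]\lesssim k^{-(\frac12-\alpha)d}$ of \cite{stone1965local} (valid in $d\ge3$) and the summability of the correction both survive. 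The part of $\tilde J$ coming from $u_1\le T_N$ is at most $2T_N$, which has exponential tails; hence, as in Step~1 there, $\sup_{x,y,X_0,Y_0}\pi[\tilde J>K]<\tfrac12$ once $N$ and then $K$ are chosen large.

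Step~2 then goes through verbatim: one sets $\tau_0=0$ and, for $k\ge1$, lets $\tau_k$ be the first integer $n>\tau_{k-1}$ for which
\[
\int_{\tau_{k-1}}^{n+1}\!\!\int_{\tau_{k-1}}^{n+1} 1_{\{|u_1-u_2|\le1\}}\,1_{\{|x-y+\omega_{X_0}(u_1)-\omega_{Y_0}(u_2)|\le1\}}\,du_1\,du_2 \ >\ K ,
\]
and uses the strong Markov property at $\tau_{k-1}$ together with the regeneration structure (the future of the chain started from $(X_{\tau_{k-1}},Y_{\tau_{k-1}})$ being of the same type, with $x,y$ shifted) and the Step~1 bound to obtain $\pi[\tilde J>t]\le 2^{-\lfloor t/K\rfloor}$. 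This gives $\E_\pi[e^{\mu\tilde J}]<\infty$ uniformly for $\mu<(\log2)/K$, and the corollary follows. The main obstacle is thus the lag between $u_1$ and $u_2$; once it is absorbed into the (already polynomially growing) scales in the local limit theorem estimate, the rest is a routine repetition of the earlier proof. One should also check that $\mu_0$, hence $\lambda_0$, can be chosen positive and uniform for small $\lambda$: this holds because $\|I\|_\infty=O(\lambda^2)$, so the Doeblin constant $\gamma$ and the constants in the local limit theorem stay uniform as $\lambda\to0$, whence $K$ remains bounded.
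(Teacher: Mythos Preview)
Your reduction to $\tilde J$ and the overall two-step strategy are correct, but the paper disposes of the unit lag by a shorter device. After the same indicator bound, the paper changes variables $u_2\mapsto u_1+s$ on the region $u_2>u_1$ and writes the double integral as $\int_0^1\big(\int_0^\infty 1_{\{|x-y+\omega_{X_0}(u_1)-\omega_{Y_0}(u_1+s)|\le1\}}\,du_1\big)\,ds$; Jensen's inequality in the $s$-variable then reduces the exponential moment to a \emph{uniform} bound on $\E_\pi\big[\exp\{\lambda\,\ell(s,x,y,X_0,Y_0)\}\big]$ over $s\in[0,1]$, where $\ell(s,\cdot)$ is the nearby time of $\omega_{X_0}$ and the fixed $s$-shifted path $\omega_{Y_0}(\cdot+s)$. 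At that point the proof of Proposition~\ref{p.expintegrability} applies verbatim to the shifted pair, with no random radius to track. Your route---dominating $\tilde J$ by an inflated-radius nearby time and absorbing the (path-dependent) oscillation into the polynomially growing scales $k^\alpha$ of Step~1---also works, since the oscillation over a regeneration block has exponential tails by Lemma~\ref{l.meanvar} and the modified events $B_k$ remain summable; but it rederives both steps rather than reducing to the proposition already proved. The Jensen trick buys a one-line reduction; your argument is self-contained at the cost of redoing the local-limit-theorem step with a random radius.
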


\begin{proof}
As $R_\phi(u_1,u_2)=0$ if $|u_1-u_2|>1$ and $R_\psi$ is supported on $\{x:|x|\leq 1\}$, we have 
\[
\begin{aligned}
 &\int_0^\infty\int_0^\infty  R_\phi(u_1,u_2)R_\psi(x-y+\omega_{X_0}(u_1)-\omega_{Y_0}(u_2))du_1du_2\\
&\les \int_0^\infty\int_0^\infty  1_{\{|u_1-u_2|\leq 1\}}1_{\{ |x-y+\omega_{X_0}(u_1)-\omega_{Y_0}(u_2)|\leq 1\}} du_1du_2.
 \end{aligned}
 \]
Consider the region $u_2>u_1$. After a change of variable and an application of Jensen's inequality, we have
 \[
\begin{aligned}
&\exp\Big\{\int_0^1  \Big(\int_0^\infty \ 1_{\{ |x-y+\omega_{X_0}(u_1)-\omega_{Y_0}(u_1+u_2)|\leq 1\}}du_1 \Big)du_2 \Big\}\\
&\leq\int_0^1 \left( \exp\Big\{\int_0^\infty  1_{\{ |x-y+\omega_{X_0}(u_1)-\omega_{Y_0}(u_1+u_2)|\leq 1\}}du_1 \Big\}\right)du_2.
\end{aligned}
\]
It suffices to show that there exists $\lambda_0>0$ so that for 
$\lambda\in(0,\lambda_0)$ 
we have 
\begin{equation}\label{sep1904}
\E_\pi[e^{\lambda \ell(u_2,x,y,X_0,Y_0)}]\hbox{ is bounded uniformly in $u_2\in[0,1],x,y\in\R^d, X_0,Y_0\in\Omega$,}
\end{equation}
where 
\[
\ell(u_2,x,y,X_0,Y_0)= \int_0^\infty   1_{\{ |x-y+\omega_{X_0}(u_1)-\omega_{Y_0}(u_1+u_2)|\leq 1\}}du_1
\]
is the total ``nearby''  time of $\omega_{X_0}$ and the ``shifted'' $\omega_{Y_0}$. 
We can repeat the proof of~\eqref{e.insetimetail} verbatim to 
establish an identical estimate for  $\ell(u_2,x,y,X_0,Y_0)$,
from which (\ref{sep1904}) follows immediately, for $0<\lambda<C_2$. This
completes the proof.
\end{proof}

\section{Proof of Proposition~\ref{p.varcon}}
\label{s.pp}

Before proving Proposition~\ref{p.varcon}, we  
discuss  some heuristics of the convergence 
of $\cF_\eps(r,y,M_1,M_2)$ as~$\eps\to0$ and $M_1,M_2\to\infty$. Recall that
\begin{equation}\label{sep1906}
\cF_\eps(r,y,M_1,M_2)=\int_{\R^{2d}}\int_{[0,1]^2}\what{\E}_{B,{t}/{\eps^2}}\Big[\I_\eps e^{\J_\eps(M_1,M_2)}\Big]
\prod_{i=1}^2 \phi(s_i)\psi(x_i)ds_1ds_2dx_1dx_2,
\end{equation}
with
\begin{equation}
\I_\eps=\I_\eps(x_1,x_2,y,s_1,s_2,r)= 
\prod_{i=1}^2 g(\eps x_i+y-\eps B_{{(t-r)}/{\eps^2}-s_i}^i)u_0(\eps x_i+y+\eps B_{{t}/{\eps^2}}^i-\eps B_{{(t-r)}/{\eps^2}-s_i}^i).
\end{equation}
As shown in Proposition~\ref{p.clt}, the diffusively rescaled Brownian path $\eps B_{s/\eps^2}^i$ behaves like $W_s^i$, so we expect
that 
\begin{equation}\label{sep1908}
\I_\eps \Rightarrow \prod_{i=1}^2 g(y-W_{t-r}^i) u_0(y+W_t^i-W_{t-r}^i).
\end{equation}
in distribution. The exponential factor in (\ref{sep1906}) is
\begin{equation}\label{e.expJ}
\begin{aligned}
&\mathcal{J}_\eps(M_1,M_2)=\J_\eps(M_1,M_2,x_1,x_2,s_1,s_2,r)\\
&=\lambda^2\int_{-1}^{M_1}\int_{-1}^{M_2}R_\phi(u_1,u_2)R_\psi(x_1-x_2+B_{\frac{t-r}{\eps^2}+u_1}^1-B_{\frac{t-r}{\eps^2}-s_1}^1-B_{\frac{t-r}{\eps^2}+u_2}^2+B_{\frac{t-r}{\eps^2}-s_2}^2)du_1du_2,
\end{aligned}
\end{equation}
and 
measures the ``nearby''  time of two independent paths. Since $R_\psi$ is 
compactly supported,
most of the contribution in (\ref{e.expJ}) comes  
from $u_1,u_2\in [-1,M]$, with some large $M$ fixed, 
as indicated by Corollary~\ref{c.intersection}.
Thus, $\J_\eps$ depends only on the microscopic increments of $B^{1,2}$ around $(t-r)/\eps^2$ that are asymptotically
decorrelated from both $W_{t-r}^{1,2}$ and $W_t^{1,2}$.
Thus, $\J_\eps$ should be asymptotically 
independent  from  
$\I_\eps$, and the limit of $\J_\eps$ determines the effective variance $\nu_{\mathrm{eff}}^2$ in~\eqref{e.conf}.

The goal of this section is to make the above heuristics precise. 
The proof is in two steps.
We first show the convergence of $\cF_\eps$ for a fixed $r\in(0,t),y\in\R^d$. 
Then, 
we prove a uniform bound on $\cF_\eps$.

The expression \eqref{e.expJ} shows that $\J_\eps$ depends on the trajectories of $B^1,B^2$ starting 
from ${(t-r)}/{\eps^2}-1$, and
for a fixed $r\in(0,t),\eps>0$, we choose 
\[
\tau=\frac{t-r}{\eps^2}-\Big[\frac{t-r}{\eps^2}\Big].
\] 
Recall that $T=t/\eps^2$, $N_\eps=[t/\eps^2-\tau]$, and 
\[
\begin{aligned}
B^1&=\{B_s^1: s\in[0,t/\eps^2]\}=(X_0,\ldots,X_{N_{\eps}+1}),\\
B^2&=\{B_s^2: s\in[0,t/\eps^2]\}=(Y_0,\ldots,Y_{N_{\eps}+1}).
\end{aligned}
\]
It is clear that $\J_\eps$ 
is determined by the increments of~$B^1$ and $B^2$
for times larger than $(t-r)/\eps^2-2$, that is, for $n>N_{\eps,r}$, with
\[
N_{\eps,r}=\Big[\frac{t-r}{\eps^2}\Big]-1.
\]
To simplify the notation, we define 
\[
\tilde{X}_\eps=X_{N_{\eps,r}}, \   \ \tilde{Y}_\eps=Y_{N_{\eps,r}}.
\]
We also note that by \eqref{e.rerk1}, we have 
\[
\what{\E}_{B,{t}/{\eps^2}}[\I_\eps e^{\J_\eps}]=\E_{\pi}[\I_\eps e^{\J_\eps}\cG_\eps(X_{N_\eps})\cG_\eps(Y_{N_\eps})].
\]

\subsection{Pointwise convergence} 


We first explain how the effective variance $ \nu_{\mathrm{eff}}$ is defined. 
For any ``starting pieces'' $X_0,Y_0\in\Omega$, and starting points
$x_1,x_2\in\R^d$, as well as
$M_1,M_2>0$,   and $s_1,s_2\in[0,1]$, we define
\begin{equation}\label{e.defFM}
\begin{aligned}
&\mathscr{H}_{M_1,M_2}(X_0,Y_0,x_1,x_2,s_1,s_2)=\E_\pi\Big[\exp\Big\{\lambda^2\int_{-1}^{M_1}\int_{-1}^{M_2}R_\phi(u_1,u_2)\\
&\times
R_\psi(x_1-x_2+\omega_{X_0}(2+u_1)-\omega_{X_0}(2-s_1)-\omega_{Y_0}(2+u_2)+\omega_{Y_0}(2-s_2))du_1du_2\Big\}\ |\ X_0,Y_0
\Big].
\end{aligned}
\end{equation}
The effective variance is then
\begin{equation}\label{e.defnu}
\nu_{\mathrm{eff}}^2=\int_{\R^{2d}}\int_{[0,1]^2}\E_\pi[\mathscr{H}_{\infty,\infty}(\tilde{X},\tilde{Y},x_1,x_2,s_1,s_2)]
\prod_{i=1}^2 \phi(s_i)\psi(x_i)ds_1ds_2dx_1dx_2,
\end{equation}
with $\tilde{X}$ and  $\tilde{Y}$ 
sampled, independently, from the invariant measure of $\hat{\pi}$.
 
In the following, we fix $x_1,x_2\in\R^d$ and $s_1,s_2\in[0,1]$, and simply write $\mathscr{H}_{M_1,M_2}(X_0,Y_0)$. The next two lemmas show the convergence
\begin{equation}\label{sep2002}
\cF_\eps(r,y,M_1,M_2) \to \nu_{\mathrm{eff}}^2  |\bar{g}(t-r,y)\bar{u}(r,y)|^2,~~\hbox{as $\eps\to0$ and $M_1,M_2\to\infty$, }
\end{equation}
for fixed $r\in(0,t),y\in\R^d$. 
\begin{lemma}\label{l.pointbd}
There exists $C>0$ independent of $\eps,M_1,M_2,x_1,x_2,s_1,s_2$ such that 
\begin{equation}\label{sep2006}
\what{\E}_{B,{t}/{\eps^2}}[\I_\eps e^{\J_\eps(M_1,M_2)}]\leq C.
\end{equation}
\end{lemma}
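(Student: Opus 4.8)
The plan is to bound $\I_\eps$ and the end-weights $\cG_\eps$ crudely, reducing the statement to an exponential moment of $\J_\eps$, which is then handled by Corollary~\ref{c.intersection}.

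\emph{Step 1: reduction to an exponential moment of $\J_\eps$.} Since $g\in\C_c^\infty(\R^d)$ and $u_0\in\C_b(\R^d)$ are bounded, $|\I_\eps|\le\|g\|_\infty^2\|u_0\|_\infty^2$ for every realization of $(B^1,B^2)$ and every choice of the parameters. Passing to the two-component representation~\eqref{e.rerk1} and using that $\cG_\eps$ is bounded uniformly in $\eps$, say by $C_G$ (noted just after~\eqref{e.rndeG}), one gets $\what\E_{B,t/\eps^2}[\I_\eps e^{\J_\eps(M_1,M_2)}]=\E_\pi[\I_\eps e^{\J_\eps(M_1,M_2)}\cG_\eps(X_{N_\eps})\cG_\eps(Y_{N_\eps})]\le\|g\|_\infty^2\|u_0\|_\infty^2C_G^2\,\E_\pi[e^{\J_\eps(M_1,M_2)}]$, so it suffices to bound $\E_\pi[e^{\J_\eps(M_1,M_2)}]$ uniformly. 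Because $R_\phi,R_\psi\ge0$, enlarging the integration region only increases $\J_\eps$, so I replace $(M_1,M_2)$ by the largest admissible value $(r/\eps^2,r/\eps^2)$; the case $t-r\le\eps^2$ needs only the cosmetic modification of Remark~\ref{r.intedomain}, and $\eps$ bounded away from $0$ is trivial, so I take $\eps$ small.

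\emph{Step 2: identifying $\J_\eps$ with the functional of Corollary~\ref{c.intersection}.} The integrand of $\J_\eps$ involves $B^1,B^2$ only at times $\ge(t-r)/\eps^2-1$. With $\tau=(t-r)/\eps^2-[(t-r)/\eps^2]$ as in Section~\ref{s.pp}, set $b:=(t-r)/\eps^2-1$ and condition on the pair $(\hat X,\hat Y):=(X_{N_{\eps,r}+1},Y_{N_{\eps,r}+1})$ of chain increments that cover $[b,b+1]$. Writing each $B^i$ occurring in $\J_\eps$ as $B^i_b$ plus a displacement of the path $\omega_{\hat X}$ (resp.\ $\omega_{\hat Y}$) built from the future increments: the base points $B^i_b$ cancel; the displacements $B^i_{(t-r)/\eps^2-s_i}$ lie inside the conditioned first increment and become fixed vectors; and after shifting $u_i\mapsto u_i+1$, using $R_\phi(u_1-1,u_2-1)\le R_\phi(u_1,u_2)$ (immediate from $\phi\ge0$), coupling the finite chain to the homogeneous infinite chain of Corollary~\ref{c.intersection} along the $\hat\pi$-increments, and enlarging the range to $[0,\infty)^2$ by nonnegativity, one obtains
\[
\J_\eps(r/\eps^2,r/\eps^2)\le C_{\mathrm{term}}+\lambda^2\int_0^\infty\!\!\int_0^\infty R_\phi(u_1,u_2)\,R_\psi\big(z+\omega_{\hat X}(u_1)-\omega_{\hat Y}(u_2)\big)\,du_1\,du_2,
\]
where $z\in\R^d$ is deterministic given the conditioning (it collects $x_1-x_2$ and the two $s_i$-displacements) and $C_{\mathrm{term}}$ is a genuine constant coming from the last, sub-unit-length increment of the chain, finite because $R_\phi$ is bounded and supported in $\{|u_1-u_2|<1\}$. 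Taking $\E_\pi[\,\cdot\mid\hat X,\hat Y]$ and applying Corollary~\ref{c.intersection} with coupling constant $\lambda^2$, which is admissible since $\lambda<\lambda_0$ and $\lambda_0$ may be shrunk so that $\lambda^2<\lambda_0$, the supremum over starting configurations and over the spatial shift in that corollary absorbs $z,\hat X,\hat Y$, whence $\E_\pi[e^{\J_\eps}\mid\hat X,\hat Y]\le C$ uniformly; taking the outer expectation and reinstating the constants of Step~1 finishes the proof. (The resulting bound is in fact uniform in $r$ and $y$ as well, though only the stated uniformity is needed.)

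\emph{Main obstacle.} The substantive probabilistic input, exponential integrability of the two-path intersection time in $d\ge3$ for small $\lambda$, is entirely contained in Corollary~\ref{c.intersection}; the difficulty here is the bookkeeping of Step~2, namely matching the time-shifted, doubly-indexed, $s_i$-shifted integral defining $\J_\eps$, computed along the \emph{finite}, terminally-modified chain that carries the weights $\cG_\eps$, to the clean infinite-horizon two-path functional of Corollary~\ref{c.intersection}. Each of the three discrepancies, the end-weights, the terminal increment, and the random base point $B^i_b$, is individually harmless (a bounded multiplicative factor, a bounded additive constant, an exact cancellation, respectively), but they must be tracked simultaneously and with care.
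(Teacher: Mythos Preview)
Your approach is essentially the paper's: bound $\I_\eps$ and $\cG_\eps$ trivially, reduce to $\E_\pi[e^{\J_\eps}]$, condition on the chain near time $(t-r)/\eps^2$ (the paper conditions on $(X_{N_{\eps,r}},Y_{N_{\eps,r}})$ rather than your $(X_{N_{\eps,r}+1},Y_{N_{\eps,r}+1})$, an immaterial shift by one), and invoke Corollary~\ref{c.intersection}. One slip: the inequality $R_\phi(u_1-1,u_2-1)\le R_\phi(u_1,u_2)$ you claim is ``immediate from $\phi\ge0$'' is actually reversed, since $R_\phi(u_1-1,u_2-1)=\int_{-1}^\infty\phi(s-u_1)\phi(s-u_2)\,ds\ge\int_0^\infty=R_\phi(u_1,u_2)$; however this is harmless, because for $u_1,u_2\ge1$ the two sides coincide (both equal the full convolution), and the residual region $\{u_1\le1\}\cup\{u_2\le1\}$ intersected with $\{|u_1-u_2|\le1\}$ is bounded and so contributes only another absolute constant that can be absorbed into your $C_{\mathrm{term}}$.
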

\begin{lemma}\label{l.pointcon}
As $\eps\to0$ and $M_1,M_2\to\infty$, we have
\[
\what{\E}_{B,{t}/{\eps^2}}[\I_\eps e^{\J_\eps(M_1,M_2)}]\to 
\E_\pi[\mathscr{H}_{\infty,\infty}(\tilde{X},\tilde{Y})]|\bar{g}(t-r,y)\bar{u}(r,y)|^2.
\]
\end{lemma}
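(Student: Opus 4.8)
The plan is to split the quantity $\what{\E}_{B,t/\eps^2}[\I_\eps e^{\J_\eps(M_1,M_2)}]$ into a product of two asymptotically independent pieces: a ``bulk'' part carried by the diffusive behaviour of the two tilted paths, which will produce $|\bar g(t-r,y)\bar u(r,y)|^2$, and a ``local'' part concentrated near time $(t-r)/\eps^2$, which will produce $\E_\pi[\mathscr H_{\infty,\infty}(\tilde X,\tilde Y)]$. The key structural fact is the one already exploited in Section~\ref{s.chain}: both $\I_\eps$ and $\J_\eps$ are functions of the increments $(X_k,Y_k)$ of the two-component chain, but $\J_\eps(M_1,M_2)$ depends only on finitely many increments with index $n>N_{\eps,r}$ (for $M_1,M_2$ fixed, on $O(M_1+M_2)$ of them), whereas $\I_\eps$ depends on the endpoints $B^i_{(t-r)/\eps^2-s_i}$ and $B^i_{t/\eps^2}$, i.e.\ on \emph{averaged} quantities that are decorrelated from a bounded number of microscopic increments because of the Doeblin/regeneration structure \eqref{e.doeblin-twocomp}, \eqref{e.conexpfast}.

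The first step is to reduce to finite $M_1,M_2$. Using Corollary~\ref{c.intersection} together with H\"older's inequality, one bounds the contribution of the ``tail'' $\J_\eps(M_1,M_2)-\J_\eps(M,M)$ for large $M$: the exponential moment of the total nearby time is finite, so $e^{\J_\eps(M_1,M_2)}-e^{\J_\eps(M,M)}\to 0$ in $L^p$ uniformly in $\eps$ as $M\to\infty$, and similarly $\mathscr H_{M,M}\to\mathscr H_{\infty,\infty}$. Thus it suffices to prove the convergence with $M_1=M_2=M$ fixed and then let $M\to\infty$. Next, condition on the increment $(\tilde X_\eps,\tilde Y_\eps)=(X_{N_{\eps,r}},Y_{N_{\eps,r}})$: given this pair, the future increments $(X_n,Y_n)_{n>N_{\eps,r}}$ run as the two-component chain started from $(\tilde X_\eps,\tilde Y_\eps)$, and $e^{\J_\eps(M,M)}$ is (after the change of time-origin $u\mapsto (t-r)/\eps^2+u$ and a harmless shift by $\le 2$ units) precisely the integrand defining $\mathscr H_{M,M}(\tilde X_\eps,\tilde Y_\eps,x_1,x_2,s_1,s_2)$ up to an error that involves only $O(M)$ increments and vanishes in $L^1$. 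So
\[
\what{\E}_{B,t/\eps^2}\big[\I_\eps e^{\J_\eps(M,M)}\big]
= \E_\pi\big[\I_\eps\,\mathscr H_{M,M}(\tilde X_\eps,\tilde Y_\eps)\,\cG_\eps(X_{N_\eps})\cG_\eps(Y_{N_\eps})\big]+o(1),
\]
where the remaining randomness in $\mathscr H_{M,M}(\tilde X_\eps,\tilde Y_\eps)$ enters only through $(\tilde X_\eps,\tilde Y_\eps)$.

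The third step is the decoupling. Because there are (with probability tending to one) many regeneration times between $0$ and $N_{\eps,r}$, and again between $N_{\eps,r}$ and $N_\eps$, the pair $(\tilde X_\eps,\tilde Y_\eps)$ is, up to an exponentially small total-variation error \eqref{e.conexpfast}, distributed as $(\tilde X,\tilde Y)$ (each from the invariant measure of $\hat\pi$, independently) and is asymptotically independent of the macroscopic quantities $\eps B^i_{(t-r)/\eps^2-s_i}$ and $\eps B^i_{t/\eps^2}$ that appear in $\I_\eps$ and of $\cG_\eps(X_{N_\eps})\cG_\eps(Y_{N_\eps})$; the latter factors average to $1$ in the relevant sense as in the proof of Lemma~\ref{l.conmean}. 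Concretely, I would insert a ``gap'' of $\lfloor\eps^{-\kappa}\rfloor$ increments (for suitable $0<\kappa<1$) on either side of the block carrying $\mathscr H_{M,M}$, use the Doeblin coupling to replace the chain before the gap by a fresh copy, and control the cost of the replacement by \eqref{e.conexpfast}. After this surgery, $\I_\eps$ depends on two independent diffusively rescaled tilted paths to which Proposition~\ref{p.clt} applies, giving the weak convergence \eqref{sep1908}; combined with the boundedness of $\I_\eps$, $\mathscr H_{M,M}\le$ const (from Corollary~\ref{c.intersection}) and $\cG_\eps$, bounded convergence yields
\[
\what{\E}_{B,t/\eps^2}\big[\I_\eps e^{\J_\eps(M,M)}\big]\ \longrightarrow\
\E_\pi\big[\mathscr H_{M,M}(\tilde X,\tilde Y)\big]\,
\E_B\Big[\prod_{i=1}^2 g(y-W^i_{t-r})\,u_0(y+W^i_t-W^i_{t-r})\Big],
\]
and the last expectation factorizes over $i=1,2$ and equals $\bar g(t-r,y)^2\bar u(r,y)^2$ by the Feynman--Kac representations of \eqref{e.heathom} and \eqref{e.heathom1} (here one also uses that $s_i\in[0,1]$ and the $O(\eps)$ time-shifts are negligible). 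Letting $M\to\infty$ and invoking Step~1 finishes the proof.

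The main obstacle is the decoupling step: making rigorous that the \emph{local} functional $\mathscr H_{M,M}(\tilde X_\eps,\tilde Y_\eps)$, which sits in the middle of the time interval, is asymptotically independent both of the early part of the path (which determines $\eps B^i_{(t-r)/\eps^2-s_i}$) and of the late part (which determines $\eps B^i_{t/\eps^2}$ and $\cG_\eps(X_{N_\eps})$). The regeneration structure of the two-component chain and the uniform Doeblin bound \eqref{e.doeblin-twocomp} are exactly what make this work, but one has to be careful that the functional $\mathscr H_{M,M}$ is bounded uniformly (Corollary~\ref{c.intersection}) so that the small total-variation errors from the coupling translate into small errors in the expectation, and that the number of increments on which $\mathscr H_{M,M}$ genuinely depends stays $O(M)$, hence negligible compared with the $\eps^{-\kappa}$ decoupling gaps.
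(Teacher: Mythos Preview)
Your overall strategy matches the paper's: reduce to finite $M$ via Corollary~\ref{c.intersection}, decouple the macroscopic factor $\I_\eps$ from the local factor $e^{\J_\eps}$ using the regeneration structure, then apply Proposition~\ref{p.clt} and \eqref{e.conexpfast} to pass to the limit. The paper does exactly this in three steps, and your choice of tools is correct.

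The ordering of your steps, however, creates a gap. The displayed identity in your second step,
\[
\what{\E}_{B,t/\eps^2}\big[\I_\eps e^{\J_\eps(M,M)}\big]
= \E_\pi\big[\I_\eps\,\mathscr H_{M,M}(\tilde X_\eps,\tilde Y_\eps)\,\cG_\eps(X_{N_\eps})\cG_\eps(Y_{N_\eps})\big]+o(1),
\]
is not justified as stated. Here $\mathscr H_{M,M}(\tilde X_\eps,\tilde Y_\eps)$ is the conditional expectation of $e^{\J_\eps(M,M)}$ given the chain up to step $N_{\eps,r}$, and one may replace $e^{\J_\eps}$ by that conditional expectation inside $\E_\pi[\cdots]$ only if every other factor is measurable with respect to that past. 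But $\I_\eps$ depends on $B^i_{t/\eps^2}$ and $\cG_\eps$ depends on $(X_{N_\eps},Y_{N_\eps})$; both lie in the future of $N_{\eps,r}$ and share increments with $\J_\eps(M,M)$. The discrepancy is not an ``$O(M)$-increment'' perturbation vanishing in $L^1$; it is a genuine correlation between $\I_\eps\cdot\cG_\eps$ and $e^{\J_\eps}$ that has not yet been broken.

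The paper's fix, which is really the correct ordering of your own step~3, is to do the surgery \emph{before} conditioning: first replace $\I_\eps$ by a modified $\tilde\I_\eps$ whose $g$-argument is evaluated at time $(t-r)/\eps^2-\eps^{-\alpha}$ and whose $u_0$-argument uses only the increment between a regeneration time $T_M^\eps\ge (t-r)/\eps^2+\eps^{-\alpha}$ and $t/\eps^2-\eps^{-\alpha}$; then dispose of $\cG_\eps$ via Lemma~\ref{l.rnde}. Once the three blocks (past of the gap, block carrying $\J_\eps$, increments after $T_M^\eps$) are genuinely separated, conditioning on $(\tilde X_\eps,\tilde Y_\eps)$ produces the factorization \eqref{e.e222} exactly. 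In short: decouple first, then condition.
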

  
\begin{proof}[Proof of Lemma~\ref{l.pointbd}]
Since $\I_\eps$ and $\cG_\eps$ are both bounded,  we have
\[
\what{\E}_{B,{t}/{\eps^2}}[\I_\eps e^{\J_\eps}] \les \E_{\pi}[e^{\J_\eps}].
\]
We first condition on $\tilde{X}_\eps,\tilde{Y}_\eps$ and assume that
\[
\frac{t-r}{\eps^2}+M_i\leq \tau+N_\eps.
\]
In this case, $\J_\eps$ is not related to $X_{N_\eps+1}, Y_{N_\eps+1}$ (which are sampled differently), and we can replace~$B^1,B^2$ 
with $\omega_{\tilde{X}_\eps},\omega_{\tilde{Y}_\eps}$, that is, the homogeneous chains started 
from $\tilde{X}_\eps,\tilde{Y}_\eps$, respectively, with the transition kernel $\hat{\pi}$. It is easy to check that in this case
\begin{equation}\label{sep2004}
 \E_{\pi}[e^{\J_\eps(M_1,M_2)} | \tilde{X}_\eps,\tilde{Y}_\eps]=\mathscr{H}_{M_1,M_2}(\tilde{X}_\eps,\tilde{Y}_\eps).
\end{equation}
In the case when $\J_\eps$ involves the last increment $X_{N_\eps+1},Y_{N_\eps+1}$, 
it is clear that we still have (\ref{sep2004}),  with
equality replaced by $\les$. 

By Corollary~\ref{c.intersection}, we have
\[
\begin{aligned}
\mathscr{H}_{M_1,M_2}(\tilde{X}_\eps,\tilde{Y}_\eps)\les1,
\end{aligned}
 \]
uniformly in $x_1,x_2\in\R^d,s_1,s_2\in[0,1],M_1,M_2>0$ and $\tilde{X}_\eps,\tilde{Y}_\eps\in\Omega$, and (\ref{sep2006}) follows.
\end{proof}
  
\begin{proof}[Proof of Lemma~\ref{l.pointcon}]
We divide the proof into three steps. 

\emph{Step 1}. We claim that for any $\delta>0$, there exists a universal $M>0$ such that if $\min(M_1,M_2)>M$, we have
\begin{equation}\label{e.cutoff}
\what{\E}_{B,{t}/{\eps^2}}\Big[|e^{\J_\eps(M_1,M_2)}-e^{\J_\eps(M,M)}|\Big]<\delta.
\end{equation}
First, since $R_\phi,R_\psi\geq0$,  $\cG_\eps$ is bounded 
and $R_\phi(u_1,u_2)$ is supported on $|u_1-u_2|\le 1$, we have
\begin{equation}\label{e.e201}
\begin{aligned}
\what{\E}_{B,{t}/{\eps^2}}\Big[|e^{\J_\eps(M_1,M_2)}-e^{\J_\eps(M,M)}|\Big] 
\les \E_{\pi}\Big[ e^{\J_\eps(M_1,M_2)}1_{\{\mathcal{E}_1(M)>0\}}\Big],
\end{aligned}
\end{equation}
with 
\[
\mathcal{E}_1(M)=\sup_{M_1,M_2>M}\int_{M-1}^{M_1}\int_{M-1}^{M_2}R_\phi(u_1,u_2)
R_\psi(x_1-x_2+B_{\frac{t-r}{\eps^2}+u_1}^1-B_{\frac{t-r}{\eps^2}-s_1}^1
-B_{\frac{t-r}{\eps^2}+u_2}^2+B_{\frac{t-r}{\eps^2}-s_2}^2)du_1du_2.
\]
After applying the Cauchy-Schwarz inequality to the r.h.s. of \eqref{e.e201} and using Lemma~\ref{l.pointbd}, 
we only need to consider $\pi[\mathcal{E}_1(M)>0]$, which is essentially the same as the probability of the ``nearby 
time'' of~$B^1,B^2$ being greater than $M$. By the same argument  as in
the proof of Lemma~\ref{l.pointbd}, Proposition~\ref{p.expintegrability} and Corollary~\ref{c.intersection}, 
we have $\pi[\mathcal{E}_1]\to0$ as $M\to\infty$, which proves \eqref{e.cutoff}.
  
\emph{Step 2}. We show that  
\begin{equation}\label{sep2012}
\what{\E}_{B,{t}/{\eps^2}}[(\I_\eps-\tilde{\I}_\eps)e^{\J_\eps(M,M)}]\to0,~~\hbox{as $\eps\to0$,}
\end{equation} 
where
\[
\tilde{\I}_\eps=\prod_{i=1}^2 g(y-\eps B_{{(t-r)}/{\eps^2}-\eps^{-\alpha}}^i) 
u_0(y+\eps B_{{t}/{\eps^2}-\eps^{-\alpha}}^i-\eps B_{T_M^\eps}^i),
\]
with
\[
T_M^\eps=\min\{T_i:  \frac{t-r}{\eps^2}+\eps^{-\alpha}\leq T_i\leq \frac{t}{\eps^2}-\eps^{-\alpha}\},
\]
and the  convention that   $T_M^\eps={t}/{\eps^2}-\eps^{-\alpha}$ 
if there is no regeneration time in 
the interval $[{(t-r)}/{\eps^2}+\eps^{-\alpha},{t}/{\eps^2}-\eps^{-\alpha}]$. 
As 
$\cG_\eps$ and $\J_\eps(M,M)$ are bounded, we have 
\begin{equation}\label{e.e202}
\begin{aligned}
\what{\E}_{B,{t}/{\eps^2}}[|\I_\eps-\tilde{\I}_\eps|e^{\J_\eps(M,M)}] \les \E_\pi[|\I_\eps-\tilde{\I}_\eps|].
\end{aligned}
\end{equation}
By Proposition~\ref{p.clt}, we have the convergence in distribution of
\begin{equation}\label{e.wkcon}
\begin{aligned}
(\eps B_{\frac{t-r}{\eps^2}-\eps^{-\alpha}}^i, \eps B_{\frac{t-r}{\eps^2}-s_i}^i, \eps B_{T_M^\eps}^i, 
\eps B_{\frac{t}{\eps^2}-\eps^{-\alpha}}^i, \eps B_{\frac{t}{\eps^2}}^i)\Rightarrow (W_{t-r}^i,W_{t-r}^i,W_{t-r}^i,W_t^i, W_t^i),
\end{aligned}
\end{equation}
which implies that
the r.h.s. of \eqref{e.e202} goes to zero as $\eps\to0$.
  
\emph{Step 3}. We prove the convergence of 
\begin{equation}\label{e.constep3}
\what{\E}_{B,{t}/{\eps^2}}[\tilde{\I}_\eps e^{\J_\eps(M,M)}]\to 
\E_{\pi}[\mathscr{H}_{M,M}(\tilde{X},\tilde{Y})]|\bar{g}(t-r,y)\bar{u}(r,y)|^2,
\end{equation}
where $\tilde{X},\tilde{Y}$ are sampled independently from the invariant measure of $\hat{\pi}$. 
First, we have
\[
\what{\E}_{B,{t}/{\eps^2}}[\tilde{\I}_\eps e^{\J_\eps(M,M)}]=
\E_\pi[\tilde{\I}_\eps e^{\J_\eps(M,M)} \cG_\eps(X_{N_\eps})\cG_\eps(Y_{N_\eps})].
\]
Note that, for $\eps$ sufficiently small (depending on $M$ and $r$),
both $\tilde{\I}_\eps$ and ${\J_\eps(M,M)}$ depend only on~$\{B^i_s: s\leq {t}/{\eps^2}-\eps^{-\alpha}\}$. 
Lemma~\ref{l.rnde} implies that it suffices to prove the convergence of 
$\E_\pi[\tilde{\I}_\eps \exp\{\J_\eps(M,M)\} ]$. We write
\begin{equation}\label{e.e222}
\E_\pi[\tilde{\I}_\eps e^{\J_\eps(M,M)}]
=\E_\pi\Big[\prod_{i=1}^2g(y-\eps B_{{(t-r)}/{\eps^2}-\eps^{-\alpha}}^i)\mathscr{H}_{M,M}(\tilde{X}_\eps,\tilde{Y}_\eps)\Big]
\E_\pi\Big[\prod_{i=1}^2u_0(y+\eps B_{{t}/{\eps^2}-\eps^{-\alpha}}^i-\eps B_{T_M^\eps}^i)\Big].
\end{equation}
Here, we used the independence of the increments 
after the regeneration time $T_M^\eps$ to split off the second factor, and
the separation between the time $(t-r)/\eps^2-\eps^{-\alpha}$ 
and the times appearing in the integration in $\J_\eps(M,M)$
in the first factor. 
By the weak convergence in \eqref{e.wkcon}, we have
 \[
\E_\pi\Big[\prod_{i=1}^2u_0(y+\eps B_{{t}/{\eps^2}-\eps^{-\alpha}}^i-\eps B_{T_M^\eps}^i)\Big]\to |\bar{u}(r,y)|^2.
\]
It remains to consider the first factor in the right side
of \eqref{e.e222}. We claim that as $\eps\to0$
\begin{equation}\label{e.coninvariant}
\E_\pi\Big[\prod_{i=1}^2g(y-\eps B_{\frac{t-r}{\eps^2}-\eps^{-\alpha}}^i)\mathscr{H}_{M,M}(\tilde{X}_\eps,\tilde{Y}_\eps)\Big]
-\E_\pi\Big[\prod_{i=1}^2g(y-\eps B_{\frac{t-r}{\eps^2}-\eps^{-\alpha}}^i)\Big]
\E_{\pi}\Big[\mathscr{H}_{M,M}(\tilde{X},\tilde{Y})\Big]\to0.
\end{equation}
The proof of \eqref{e.coninvariant} is the same as the proof of Lemma~\ref{l.rnde}, as 
$\mathscr{H}_{M,M}$ is bounded and $\tilde{X}_\eps,\tilde{Y}_\eps$ are the increments of $B^1,B^2$ on 
the interval~$[(t-r)/{\eps^2}-2,{(t-r)}/{\eps^2}-1]$. 
We apply the weak convergence~\eqref{e.wkcon} again to get
 \[
\E_\pi\Big[\prod_{i=1}^2g(y-\eps B_{{(t-r)}/{\eps^2}-\eps^{-\alpha}}^i)\Big]\to |\bar{g}(t-r,y)|^2
\]
and complete the proof of \eqref{e.constep3}.
  
Combining steps 1-3 and sending $\delta\to0$, completes the proof.
 \end{proof}

 \subsection{Proof of the uniform bound (\ref{e.bdf})}

We now prove the uniform bound (\ref{e.bdf}) in Proposition~\ref{p.varcon}.  
By Lemma~\ref{l.pointbd}, we have
\[
\what{\E}_{B,{t}/{\eps^2}}[|e^{\J_\eps(M_1,M_2)}|^2]\les 1,
\]
so by the Cauchy-Schwarz inequality, 
\[
| \cF_\eps(r,y,M_1,M_2)| \les \int_{\R^{d}}\int_{[0,1]} \what{\E}_{B,{t}/{\eps^2}}
[|g(\eps x+y-\eps B_{{(t-r)}/{\eps^2}-s})|] \phi(s)\psi(x)dsdx.
\]

\begin{lemma}\label{l.tailestimate}
For any $k\in\Z_{\geq1}$, there exists $C_k$ such that 
\begin{equation}
\what{\E}_{B,{t}/{\eps^2}}[1_{\{|\eps B_{{(t-r)}/{\eps^2}-s}|>M\}}] \leq \frac{C_k}{M^{2k}}
\end{equation}
for all $M>0$.
\end{lemma}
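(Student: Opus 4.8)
The plan is to pass to moments and invoke Chebyshev's inequality. By the representation \eqref{e.rerk}, $\what{\E}_{B,{t}/{\eps^2}}[F(B)]=\E_\pi[F(B)\cG_\eps(X_{N_\eps})]$, and since $\cG_\eps$ is uniformly bounded, the lemma follows once we establish the uniform moment bound
\[
\sup_{\eps>0}\ \sup_{r\in(0,t),\,s\in[0,1]}\ \E_\pi\big[|\eps B_{\sigma}|^{2k}\big]<\infty,\qquad \sigma:={(t-r)}/{\eps^2}-s,
\]
because Chebyshev then yields $\what{\E}_{B,{t}/{\eps^2}}[1_{\{|\eps B_\sigma|>M\}}]\les \E_\pi[|\eps B_\sigma|^{2k}]/M^{2k}\le C_k/M^{2k}$. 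Note $r\in(0,t)$ and $s\in[0,1]$ force $\sigma\le t/\eps^2$, hence $\eps^2\sigma\le t$; the case $\sigma\le0$ is vacuous since the indicator then vanishes.

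To control $\E_\pi[|\eps B_\sigma|^{2k}]$ I would reuse the regeneration decomposition from the proof of Lemma~\ref{l.conmean}. With $0=T_0<T_1<\cdots$ the regeneration times, $\X_j=B_{T_{j+1}}-B_{T_j}$ the block increments, and $K_\eps=\max\{i:T_i<\sigma\}$, write
\[
B_\sigma=\X_0+\sum_{j=1}^{K_\eps-1}\X_j+(B_\sigma-B_{T_{K_\eps}})=:J_1+J_2+J_3.
\]
The boundary terms $J_1,J_3$ are dominated by the block oscillations $Z_i$ of Lemma~\ref{l.meanvar} (which are i.i.d. with exponential tails for $i\ge1$, and $Z_0$ separately exponentially tailed), exactly as the terms $I_1,I_3$ in the proof of Lemma~\ref{l.conmean}; using $\E_\pi[\#\{i:T_i<\sigma\}]\les\sigma$ one gets $\E_\pi[|J_1|^{2k}]\les1$ and $\E_\pi[|J_3|^{2k}]\les\sigma$, so after scaling their contribution to $\E_\pi[|\eps B_\sigma|^{2k}]$ is $O(\eps^{2k})+O(\eps^2\sigma)=O(1)$, uniformly. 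For the bulk term $J_2$, condition on the $\sigma$-field generated by $\{T_i\}$; as in the proof of Lemma~\ref{l.conmean} the $\X_j$, $1\le j\le K_\eps-1$, are then independent with zero mean, so Rosenthal's inequality gives
\[
\E_\pi\big[|J_2|^{2k}\,\big|\,\{T_i\}\big]\les\Big(\sum_{j=1}^{K_\eps-1}\E_\pi[\X_j^2\mid\{T_i\}]\Big)^{k}+\sum_{j=1}^{K_\eps-1}\E_\pi[|\X_j|^{2k}\mid\{T_i\}].
\]
By (the proof of) Lemma~\ref{l.meanvar}, $\E_\pi[|\X_j|^{2m}\mid\{T_i\}]\les(T_{j+1}-T_j)^{2m}$ for all $m$; since $T_{j+1}-T_j$ are i.i.d. geometric for $j\ge1$ and $K_\eps\le\sigma$, taking $\E_\pi$ and using that a sum of $O(\sigma)$ i.i.d. variables with finite $k$-th moment has $k$-th moment $O(\sigma^k)$, one obtains $\E_\pi[|J_2|^{2k}]\les\sigma^k+\sigma\les\sigma^k$ for $\sigma\ge1$ (and $J_2=0$ for $\sigma<1$). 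Hence $\eps^{2k}\E_\pi[|J_2|^{2k}]\les(\eps^2\sigma)^k\le t^k$, and combining the three contributions gives $\E_\pi[|\eps B_\sigma|^{2k}]\les1$ uniformly in $\eps,r,s$.

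The only step requiring genuine care is this last averaging for $J_2$: one must keep track that the random number $K_\eps$ of complete regeneration blocks preceding $\sigma$ is $O(\sigma)$ and that the interregeneration times have moments of all orders, so that $\E_\pi[(\sum_{j\le K_\eps}(T_{j+1}-T_j)^2)^k]=O(\sigma^k)$ rather than something larger. This is precisely the higher-moment analogue of the $L^2$ estimate already carried out in the proof of Lemma~\ref{l.conmean}, so it amounts to routine bookkeeping with moments of geometric sums rather than a new difficulty.
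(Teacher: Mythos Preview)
Your proposal is correct and follows essentially the same approach as the paper: reduce via the boundedness of $\cG_\eps$ to a moment bound under $\pi$, apply Chebyshev, and control $\eps^{2k}\E_\pi[|B_\sigma|^{2k}]$ through the regeneration/martingale structure. The paper's version is slightly more streamlined---it absorbs your boundary pieces $J_1,J_3$ into the martingale sum and uses a single Burkholder-type inequality rather than Rosenthal, then bounds $\eps^{2k}\E_\pi[(\sum_j \X_j^2)^k]$ by a direct expansion of the product (at most $(t/\eps^2)^k$ terms, each with bounded expectation)---but this is exactly the ``routine bookkeeping'' you describe at the end.
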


By the above lemma and the fact that $g\in\C_c^\infty(\R^d), |x|\leq 1$, we have 
\[
\what{\E}_{B,{t}/{\eps^2}}[|g(\eps x+y-\eps B_{{(t-r)}/{\eps^2}-s})|] \les 1\wedge  \frac{1}{|y|^k},
\]
which implies \eqref{e.bdf} and finishes the proof of Proposition~\ref{p.varcon}.

\begin{proof}[Proof of Lemma~\ref{l.tailestimate}]
Since $\cG_\eps$ is bounded, it suffices to prove the same estimate for 
\[
\pi[|\eps B_{{(t-r)}/{\eps^2}-s}|>M].
\]
We will assume $r=0,s=0$ to simplify the notation and the proof of the general case is the same. First, we 
write $B_{t/\eps^2}$ as a
sum of independent zero-mean random variables using the regeneration structure. 
Let  $\tau=1$ and $N_\eps=[t/\eps^2]-1$ and set
\[
B_{t/\eps^2}=\sum_{k=0}^{N_\eps}X_k(1)+B_{{t}/{\eps^2}}-B_{[{t}/{\eps^2}]}.
\]
We also write  
\[
B_{{t}/{\eps^2}}=\sum_{j=0}^{K_\eps} \X_j,
\]
where 
\[
\X_j=\left\{
\begin{array}{ll}
\sum_{k=T_j}^{T_{j+1}-1}X_k(1), & j=0,\ldots,K_\eps-1,\\
B_{{t}/{\eps^2}}-B_{T_{K_\eps}}, & j=K_\eps,
\end{array}
\right.
\] 
and $K_\eps=\max\{j: T_j\leq N_\eps\}$.
Since $\X_j$ are independent random variables with zero mean conditioning on $\{T_j\}_{j=0}^{K_\eps}$, 
the sum
\[
\M_k=\sum_{j=0}^k \X_j,  \   \  k=0,\ldots,K_\eps,
\]
is a martingale.  
By the Chebyshev and martingale inequalities, we have 
\[
\pi\Big[|\eps \M_{K_\eps}|>M \ | \ \{T_j\}_{j=0}^{K_\eps}\Big] \leq \frac{1}{M^{2k}}
\E_\pi\Big[|\eps \M_{K_\eps}|^{2k} \ |\ \{T_j\}_{j=0}^{K_\eps}\Big]
\les  \frac{1}{M^{2k}} \E_\pi\Big[  \Big|\eps^2\sum_{j=0}^{K_\eps} \X_j^2\Big|^k  \ | \ \{T_j\}_{j=0}^{K_\eps}\Big].
\]
Since $K_\eps\leq N_\eps$, we only need to show that
\begin{equation}\label{e.e223}
\eps^{2k}\E_\pi\Big[\Big|\sum_{j=0}^{N_\eps} \X_j^2\Big|^k\Big]\les1.
\end{equation}
If we expand $|\sum_{j=0}^{N_\eps} \X_j^2|^k$, the number of terms is smaller than $(t/\eps^2)^k$, and each 
term is of the form $\prod_{l=1}^k \X_{j_l}^2$ for some $ j_l=0,\ldots,N_\eps$, whose expectation 
is uniformly bounded, in light of Lemma~\ref{l.meanvar}. Thus, \eqref{e.e223} holds and the proof is complete.
\end{proof}

\appendix 

\section{Technical lemmas}\label{sec:append-tech}

\begin{lemma}\label{l.renormalizationconstant}
There exist $c_1,c_2$ such that 
\begin{equation}\label{e.zetatc1c2}
\zeta_T:=\log\E_B\Big[e^{\frac12\lambda^2\int_{[0,T]^2}R(s-u,B_s-B_u)dsdu}\Big]=c_1T+c_2+o(1),~~\hbox{as $T\to\infty$.}
\end{equation}
\end{lemma}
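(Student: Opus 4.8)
The plan is to reduce the statement to a one-step comparison of $\zeta_{T+s}$ with $\zeta_T$ and then to extract $c_1$ and $c_2$ from the equilibration of the tilted path established in Section~\ref{s.chain}. Starting from \eqref{e.rndeT}, one has $e^{\zeta_T}=\E_B\big[\exp\{\tfrac12\lambda^2\int_{[0,T]^2}R(\sigma-u,B_\sigma-B_u)\,d\sigma\,du\}\big]$. Since $R(\sigma,\cdot)\equiv 0$ for $|\sigma|\ge 1$, passing from $[0,T]^2$ to $[0,T+s]^2$ (with $s\in(0,1]$) adds to the double integral only terms supported on the restriction of the path to $[T-1,T+s]$. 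Conditioning the Wiener measure on the path up to time $T$ and integrating out the fresh increment on $[T,T+s]$, I expect to obtain
\begin{equation}\label{e.onestep}
\zeta_{T+s}-\zeta_T=\log\what{\E}_{B,T}\big[G_s(B)\big],
\end{equation}
where $G_s(B)$ is a strictly positive deterministic functional of the last unit increment $\{B_{T-1+a}-B_T:a\in[0,1]\}$ alone, bounded above and below uniformly in $T$ and $s\in[0,1]$, with $\|G_s-1\|_{\infty}\to 0$ as $s\to 0$ and $s\mapsto G_s$ continuous in $L^\infty$ (the relevant integration domains have volume $O(s)$ and depend continuously on $s$).

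Next I would invoke Section~\ref{s.chain}: by the Markov-chain representation of $\what{\Pb}_T$, the Doeblin condition \eqref{e.doeblin} and the exponential total-variation bound \eqref{e.conexpfast}, the law under $\what{\Pb}_T$ of the last unit increment $\{B_{T-1+a}-B_T:a\in[0,1]\}$ converges as $T\to\infty$ to a limiting law, exponentially fast in $T$ and uniformly. With \eqref{e.onestep} this yields $\zeta_{T+s}-\zeta_T=\log\varphi(s)+o(1)$ uniformly in $s\in[0,1]$ with an exponentially small remainder, where $\varphi(s)\in(0,\infty)$ is the limit of $\what{\E}_{B,T}[G_s(B)]$; moreover $\varphi$ is continuous with $\varphi(0)=1$. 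Letting $T\to\infty$ in $(\zeta_{T+s+s'}-\zeta_{T+s})+(\zeta_{T+s}-\zeta_T)=\zeta_{T+s+s'}-\zeta_T$ forces $\varphi(s+s')=\varphi(s)\varphi(s')$ for $s,s',s+s'\in[0,1]$, so by continuity $\log\varphi(s)=c_1 s$ with $c_1:=\log\varphi(1)$, and hence $\zeta_{T+s}-\zeta_T-c_1 s\to 0$ uniformly in $s\in[0,1]$. Telescoping gives $\zeta_n=c_1 n+\sum_{k=0}^{n-1}(\zeta_{k+1}-\zeta_k-c_1)$, and the exponential decay of the summands makes $\sum_{k\ge 0}(\zeta_{k+1}-\zeta_k-c_1)$ converge; I would set $c_2$ equal to its sum (note $\zeta_0=0$), so that $\zeta_n=c_1 n+c_2+o(1)$ along integers, and then for general $T=n+\theta$, $\theta\in[0,1)$, conclude from $\zeta_T-c_1 T=(\zeta_{n+\theta}-\zeta_n-c_1\theta)+(\zeta_n-c_1 n)\to 0+c_2$. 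As a by-product, decomposing the path into $n$ unit blocks rewrites $e^{\zeta_n}$ as $e^{n\zeta_1}\int_\Omega(\mathcal{L}^{n-1}\mathbf{1})(x)\,\pi(dx)$ with $(\mathcal{L}f)(x)=\int_\Omega e^{I(x,y)}f(y)\pi(dy)$; since $\mathcal{L}f=\rho\,\Psi\,\hat\pi(f/\Psi)$ with $(\rho,\Psi)$ the leading eigenpair of \eqref{e.coreq} and $\hat\pi$ the kernel \eqref{e.trankernel}, the Doeblin condition identifies $c_1=\zeta_1+\log\rho$ and $c_2=\log\!\int_\Omega\Psi^{-1}\,d\mu-\log\rho$, where $\mu$ is the invariant measure of $\hat\pi$.

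The hard part will be the equilibration input — the uniform exponential convergence of the law of the last unit increment of $B$ under $\what{\Pb}_T$. The Markov chain of Section~\ref{s.chain} has boundary blocks whose law differs from the bulk kernel $\hat\pi$, so one must either choose the decomposition parameter $\tau$ so that the interval $[T-1,T]$ lies in the ``clean'' portion of the chain, or argue that with probability bounded below a regeneration occurs inside $[T-1,T]$ while the pre-history has already equilibrated via \eqref{e.conexpfast}. Everything else — the one-step identity \eqref{e.onestep}, the multiplicativity argument for $\varphi$, and the summation — is routine.
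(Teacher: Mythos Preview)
Your proposal is correct and lands on the same constants $c_1=\zeta_1+\log\rho$ and $c_2=\log\E_\pi[\Psi^{-1}(\tilde X)]-\log\rho$ as the paper, but the passage from integers to general $T$ is organized differently. The paper's proof has three steps: for $T\in\N$ it uses exactly your transfer-operator identity $e^{\zeta_n-n\zeta_1}=\int_\Omega(\mathcal L^{n-1}\mathbf 1)\,d\pi$ together with the Doeblin bound \eqref{e.conexpfast} to read off $c_1,c_2$ directly (this is your ``by-product''); for $T\in\mathbb Q$ it \emph{repeats the integer argument with blocks of rational length $r\in(1,2)$} and matches constants along the common subsequence $rm_1k=m_2k$, obtaining $c_{1,r}=c_1r$, $c_{2,r}=c_2$; for $T\in\R$ it invokes the (Lipschitz) continuity of $T\mapsto\zeta_T$. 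Your route replaces the block-rescaling and continuity steps with the one-step identity $\zeta_{T+s}-\zeta_T=\log\what\E_{B,T}[G_s]$ and a Cauchy functional equation for the limit $\varphi$. Both arguments rest on the same equilibration input from Section~\ref{s.chain}; the paper's version is shorter because it never needs the law of the last increment under $\what\Pb_T$ for non-integer $T$, whereas your multiplicativity step does. Your own suggestion to take $\tau=T-[T]$ handles this cleanly: then $[T-1,T]$ is exactly the block $X_N$, the terminal piece $X_{N+1}$ is trivial, and the marginal of $X_N$ under $\what\Pb_T$ is $\propto\Psi^{-1}(x)\,(\hat\pi^{N}\nu_0)(dx)$, which by \eqref{e.conexpfast} converges to $\propto\Psi^{-1}\,d\mu$ uniformly in $\tau$. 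What your approach buys is the uniform statement $\zeta_{T+s}-\zeta_T\to c_1s$ for $s\in[0,1]$ directly; what the paper's buys is that it avoids analyzing the boundary of the chain for non-integer $T$ altogether.
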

\begin{proof}
 We employ the setup of Section \ref{s.chain}.
The proof is divided into three steps, in which we prove 
that~\eqref{e.zetatc1c2} holds for $T\in \N,\mathbb{Q},\R$.

\emph{Step 1}, $T\in \N$. In the construction of the chain,  
set $\tau=1$. 
As in Section~\ref{s.constructionchain}, we have
\[
\what{\Pb}_T(d\omega)=\Psi(x_0)\pi(dx_0)\left(\prod_{k=0}^{T-2} \hat{\pi}(x_k,x_{k+1})\right)\Psi^{-1}(x_{T-1}) \rho^{T-1}e^{T\zeta_1-\zeta_T}.
\]
Using the normalization (\ref{sep1808}) gives, 
\[
\E_\pi[\Psi^{-1}(X_{T-1})]=e^{\zeta_T-T\zeta_1}\rho^{1-T}.
\]
By \eqref{e.conexpfast}, we have 
\[
e^{\zeta_T-T\zeta_1}\rho^{1-T}=e^{\zeta_T-T\zeta_1-(T-1)\log \rho}\to \E_\pi[\Psi^{-1}(\tilde{X})]
\]
exponentially fast as $T\to\infty$, where $\tilde{X}$ is sampled from the invariant measure of $\hat{\pi}$. This  proves~\eqref{e.zetatc1c2} for integer $T$ with 
\begin{equation}\label{sep2206}
c_1=\zeta_1+\log \rho,  \   \ c_2=\log\rho^{-1}+
\log\E_\pi[\Psi^{-1}(\tilde{X})].
\end{equation}
We note that the convergence rate of the remainder $o(1)\to0$ as $T\to\infty$ 
only depends on the estimates on $\Psi$ and $\gamma$ which 
are determined by $\|I\|_{L^\infty}$.

\emph{Step 2}, $T\in\mathbb{Q}$. In the construction of the chain, the choice 
of the length-one increment is arbitrary  -- we can 
take any length that is greater than one and follow the same 
construction. Take the increment of length $r\in \mathbb{Q}$ such 
that $r\in(1,2)$ (then the corresponding $I(x,y)$ is uniformly bounded), 
so there exist $m_1,m_2\in \N$ such that $rm_1=m_2$. For any $k\in \N$, 
the same proof as in Step~1 
shows that 
\[
\zeta_{rm_1k}=c_{1,r}m_1k+c_{2,r}+o(1)
\]
for some $c_{1,r},c_{2,r}$. Since 
\[
\zeta_{m_2k}=c_1m_2k+c_2+o(1)
\]
from step 1, we 
conclude that
$c_{1,r}=c_1r$ and $c_{2,r}=c_2$ by sending $k\to\infty$. Thus, for any $r\in\mathbb{Q}$, we have 
\[
\zeta_{rk}=c_1rk+c_2+o(1),
\]
with $o(1)\to0$ as $k\to\infty$, uniformly in $r\in(1,2)$. Choosing $r=T/[T]$, 
we see that \eqref{e.zetatc1c2} holds for~$T\in\mathbb{Q}$.

\emph{Step 3}, $T\in\R$. 
As $\zeta_T$ is continuous in $T$, we simply
take $T_n\in\mathbb{Q}$ so that $T_n\to T$ and~$\zeta_{T_n}\to \zeta_T$. Since 
\[
\zeta_{T_n}=c_1T_n+c_2+o(1)
\]
with $o(1)\to0$ as $T_n\to\infty$, the proof is complete.
\end{proof}

\begin{lemma}\label{l.meanvar}
Assuming $X_0\sim \pi(dx_0)$, $X_{k+1}\sim 
(1-\gamma)^{-1}(\hat{\pi}(X_{k},dx_{k+1})-\gamma\pi(dx_{k+1}))$ 
for $k\geq 0$, and~$\theta$ is an independent geometric random variable 
with parameter $\gamma$. Then, for all $k\geq 0$, $\E_\pi[X_k(1)]=0$ and 
there exists~$c>0$ such that 
\begin{equation}\label{e.gausstail}
\pi\big[\max_{s\in[0,1]}|X_k(s)|\geq t\big]\les e^{-ct^2}, 
\end{equation}
\begin{equation}\label{e.exptail}
 \pi\Big[\sum_{k=0}^\theta \max_{s\in[0,1]}|X_k(s)|>t\Big]\les e^{-ct}.
\end{equation}
\end{lemma}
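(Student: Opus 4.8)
The plan is to deduce all three assertions from one uniform estimate: each one-step conditional law of the chain has a density with respect to the Wiener measure $\Pb_1$ on $\Omega=\Omega_1$ that is bounded by a constant depending only on $\lambda,\phi,\psi,\gamma$. Indeed, since $R\ge 0$ is bounded, $\int_{[0,1]^2}R(s-u,\omega(s)-\omega(u))\,ds\,du\le\|R\|_\infty$, so by \eqref{e.rndeT} the density of $\pi=\what{\Pb}_1$ with respect to $\Pb_1$ is at most $e^{\frac12\lambda^2\|R\|_\infty-\zeta_1}$. Likewise, $I$ is bounded and $\Psi,\rho$ are bounded above and below by \eqref{sep1814}--\eqref{sep1816}, so $\hat\pi(x,dy)=\rho^{-1}\Psi(x)^{-1}e^{I(x,y)}\Psi(y)\pi(dy)$ has density with respect to $\Pb_1$ bounded uniformly in $x$; since the residual kernel $(1-\gamma)^{-1}(\hat\pi(x,\cdot)-\gamma\pi(\cdot))$ is a genuine probability measure by the Doeblin condition \eqref{e.doeblin}, it too has density with respect to $\Pb_1$ bounded by a constant $\bar C$ uniform in $x$. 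Writing $M_k=\max_{s\in[0,1]}|X_k(s)|$ and letting $\mathcal{F}_k=\sigma(X_0,\dots,X_k)$ (with $\mathcal{F}_{-1}$ trivial), it follows that for every $k\ge 0$ and every bounded measurable $F\ge 0$,
\[
\E_\pi[F(X_k)\mid\mathcal{F}_{k-1}]\le \bar C\,\E_{\Pb_1}[F(W)].
\]
Taking $F=\1_{\{\max_s|\cdot(s)|\ge t\}}$ and using the standard Gaussian estimate $\Pb_1[\max_{s\in[0,1]}|W_s|\ge t]\les e^{-c_0t^2}$ gives \eqref{e.gausstail}, uniformly in $k$.

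For the mean-zero claim I would use a reflection symmetry. Since $\psi$ is even, $\int_{\R^d}\psi(x+y)\psi(y)\,dy$ is even in $x$, hence $R(t,x)=R(t,-x)$; by \eqref{e.rndeT} this makes $\pi$ invariant under $\omega\mapsto-\omega$. The same parity yields $I(-x,-y)=I(x,y)$ from \eqref{e.defI}, so $y\mapsto\Psi(-y)$ solves the eigenvalue problem \eqref{e.coreq} with the same eigenvalue $\rho$ and the normalization \eqref{sep1808}; by the uniqueness of the Perron eigenvector (Krein--Rutman), $\Psi$ is even. Consequently $\hat\pi$, and hence the residual kernel, is equivariant under $x\mapsto-x$, and an easy induction on $k$ shows that the law of $X_k$ under $\pi$ is invariant under $\omega\mapsto-\omega$. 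Since $|X_k(1)|\le M_k$ is integrable by \eqref{e.gausstail}, we conclude $\E_\pi[X_k(1)]=0$ for all $k\ge 0$.

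For \eqref{e.exptail} I would run a moment-generating-function argument, taking care that a fixed exponent does not suffice. From the displayed domination, $\E_\pi[e^{\mu M_k}\mid\mathcal{F}_{k-1}]\le\bar C\,\E_{\Pb_1}[e^{\mu\max_{s\in[0,1]}|W_s|}]<\infty$ for all $\mu$; using $e^{\mu m}\le 1+\mu m e^{m}$ for $0\le\mu\le 1$, $m\ge 0$, one upgrades this to the crucial bound $\E_\pi[e^{\mu M_k}\mid\mathcal{F}_{k-1}]\le 1+C_*\mu$ for $0\le\mu\le 1$, with $C_*$ uniform in $k$. Iterating over $k$ gives $\E_\pi[e^{\mu\sum_{k=0}^n M_k}]\le(1+C_*\mu)^{n+1}$ for every $n$. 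Since $\theta$ is an independent geometric variable, $\E_\pi[e^{\mu\sum_{k=0}^\theta M_k}]\le\sum_{n\ge 0}\gamma(1-\gamma)^n(1+C_*\mu)^{n+1}$, which converges as soon as $(1-\gamma)(1+C_*\mu)<1$, i.e. for $\mu$ small enough depending only on $\gamma$ and $C_*$; for such a $\mu$, Markov's inequality yields $\pi[\sum_{k=0}^\theta M_k>t]\les e^{-\mu t}$, which is \eqref{e.exptail}.

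The only genuinely non-mechanical point is the exponential tail: the per-step exponential moment $\bar C\,\E_{\Pb_1}[e^{\mu\max_s|W_s|}]$ does not tend to $1$ as $\mu\to0$ because of the prefactor $\bar C$, so one cannot simply sum a geometric series with a fixed exponent; the remedy is the $1+C_*\mu$ refinement, which forces the admissible exponent to be chosen small relative to $\gamma/\bar C$. Everything else is bookkeeping with the uniform density bound and the Gaussian estimate for $\max_{s\in[0,1]}|W_s|$.
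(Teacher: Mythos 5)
Your proof is correct. The mean-zero argument (parity of $R$, $\pi$, $I$, hence of $\Psi$ by Krein--Rutman uniqueness, hence of $\hat\pi$ and the residual kernel) and the Gaussian tail (uniform density domination of each one-step conditional law by a bounded multiple of the Wiener measure, i.e.\ the paper's bound \eqref{e.bdpro}) are essentially the same as the paper's.

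Where you diverge is the exponential tail \eqref{e.exptail}. The paper's proof truncates at $[\alpha t]$ steps: it bounds $\pi[\theta>[\alpha t]]\les(1-\gamma)^{\alpha t}$ separately, and on the complement applies Chebyshev with a \emph{fixed} exponent $C_1$, arriving at $e^{-C_1t}C_2^{[\alpha t]}$ with $C_2>1$; the competing exponentials are then balanced by choosing $\alpha<C_1/\log C_2$. You instead compute the MGF of the random-length sum $S_\theta$ directly. Your key observation --- that the per-step MGF bound $\bar C\,\E_{\Pb_1}[e^{\mu\max|W|}]$ does not tend to $1$ as $\mu\to 0$ because of the density constant $\bar C$, so a naive geometric summation fails --- is exactly right, and your fix (the elementary inequality $e^{\mu m}\le 1+\mu m e^m$ for $0\le\mu\le1$, $m\ge0$, giving the refined per-step bound $1+C_*\mu$ and hence $\E_\pi[e^{\mu S_n}]\le(1+C_*\mu)^{n+1}$) resolves it cleanly. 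Summing against the geometric law of $\theta$, convergence requires $(1-\gamma)(1+C_*\mu)<1$, which fixes the admissible exponent $\mu\asymp\gamma/C_*$, and Markov finishes. Both routes work; yours dispenses with the auxiliary truncation parameter and makes the dependence of the exponential rate on $\gamma$ and the density constant explicit, at the cost of the extra $1+C_*\mu$ refinement, which the paper's two-regime argument avoids by never needing the per-step MGF close to $1$.

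One very small bookkeeping remark: to start the induction at $n=0$ you implicitly also need the refined bound $\E_\pi[e^{\mu M_0}]\le 1+C_*\mu$; this holds for the same reason since $X_0\sim\pi$ and $\pi$ itself has bounded density with respect to $\Pb_1$ by \eqref{e.rndeT}. It is worth saying explicitly, since $X_0$ is drawn from $\pi$ rather than from the residual kernel used for $k\ge1$.
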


\begin{proof}
For any measure $\nu_0$ on $\Omega$ that is symmetric, so
that 
 $\nu_0(A)=\nu_0(-A)$ with 
 $-A:=\{f: -f \in A\}$, set 
\[
\nu_1(A)=\int_{\Omega}\nu_0(dx)\hat{\pi}(x,A).
\]
Recall that 
\[
\int_{\Omega} e^{I(x,y)}\Psi(y)\pi(dy)=\rho \Psi(x).
\] 
Since 
$I(x,y)=I(-x,-y)$, $\pi$ is symmetric, and $\Psi$ is the unique eigenvector
corresponding to
$\rho$ satisfying (\ref{sep1808}),
we have {that} $\Psi(-x)=\Psi(x)$, hence
$\hat{\pi}(x,A)=\hat{\pi}(-x,-A)$ and  $\nu_1$ is symmetric.
Thus, the distribution of $X_k$ is symmetric, and
\[
\E_\pi[X_k(1)]=-\E_\pi[X_k(1)]=0.
\]

For the Gaussian tail in \eqref{e.gausstail}, we note that
\begin{equation}\label{e.bdpro}
\sup_{x\in\Omega} \frac{\hat{\pi}(x,dy)-\gamma\pi(dy)}{1-\gamma}\les \sup_{x\in\Omega}\hat{\pi}(x,dy)\les \pi(dy).
\end{equation}
As $\pi$ is the Wiener measure on $\C([0,1])$ tilted by the bounded factor 
\[
\exp\Big\{\frac12\lambda^2\int_{[0,1]^2}R(s-u,\omega(s)-\omega(u))dsdu-\zeta_1
\Big\},
\] 
there exists $c>0$ such that 
\begin{equation}\label{sep2208}
\pi[\max_{s\in[0,1]}|X_{k+1}(s)|\geq t\ | \ X_{k}] \les \pi[\max_{s\in[0,1]}|X_0(s)|\geq t] \les e^{-ct^2}
\end{equation}
uniformly in $X_{k}$. After averaging with respect to $X_k$, we obtain
 \eqref{e.gausstail}.

To prove \eqref{e.exptail}, we note that
\[
\pi[\theta>[\alpha t]]\les (1-\gamma)^{\alpha t}
\]
for any $\alpha>0$. By the Chebyshev inequality, we have
\[
\pi\Big[\sum_{k=0}^{[\alpha t]}\max_{s\in[0,1]}|X_k(s)| >t\Big]
\leq e^{-C_1t}\E_\pi\Big[
\exp\Big\{C_1\sum_{k=0}^{[\alpha t]}\max_{s\in[0,1]}|X_k(s)| \Big\}\Big]
\]
for any $C_1>0$. Using \eqref{e.bdpro} again, we have 
\[
e^{-C_1t}\E\Big[\exp\Big\{C_1\sum_{k=0}^{[\alpha t]}\max_{s\in[0,1]}|X_k(s)| 
\Big\}\Big]\les e^{-C_1t} C_2^{[\alpha t]}
\]
for some constant $C_2>0$ independent of $\alpha$. 
Taking  $\alpha<{C_1}/{\log C_2}$ finishes
the proof.  \end{proof}

\begin{lemma}\label{l.rnde}
If  
$F:\Omega_{t/\eps^2}\to\R$ is bounded and only depends on $X_0,\ldots,X_{M_\eps}$, with $N_\eps-M_\eps\to\infty$, then 
\begin{equation}\label{e.rnde1}
| \E_\pi[F(B)\cG_\eps(X_{N_\eps})]-\E_\pi[F(B)]|\to0
\end{equation}
as $\eps\to0$.
\end{lemma}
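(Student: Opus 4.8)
The plan is to deduce Lemma~\ref{l.rnde} from the exponential ergodicity of the chain $\{X_k\}$ provided by the Doeblin condition. Since $F(B)$ is a bounded function of $(X_0,\dots,X_{M_\eps})$ only while $\cG_\eps(X_{N_\eps})$ is a function of $X_{N_\eps}$ alone, and $N_\eps-M_\eps\to\infty$, the two factors should decorrelate in the limit. First I would rewrite, using the Markov property together with $\E_\pi[\cG_\eps(X_{N_\eps})]=1$,
\[
\E_\pi[F(B)\cG_\eps(X_{N_\eps})]-\E_\pi[F(B)]=\E_\pi\big[F(B)\,(h_\eps(X_{M_\eps})-1)\big],\qquad h_\eps(x):=\E_\pi\big[\cG_\eps(X_{N_\eps})\,\big|\,X_{M_\eps}=x\big],
\]
so that, since $\|F\|_\infty<\infty$, it suffices to show $\|h_\eps-1\|_{L^\infty(\Omega)}\to0$ as $\eps\to0$.

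Next I would recall that $\cG_\eps$ is bounded \emph{uniformly in} $\eps$: this follows from the two-sided bounds \eqref{sep1804} on $\Psi$, from the analogous two-sided bounds on $f_{N_\eps,N_\eps+1}$ (which depend only on $\|I\|_{L^\infty}$, uniformly over the length $\le 1$ of the last increment), and from the normalization $\E_\pi[\cG_\eps(X_{N_\eps})]=1$, exactly as observed after \eqref{e.rndeG}. For $1\le k\le N_\eps-1$ the one-step conditional law of $X_{k+1}$ given $X_k$ is the homogeneous kernel $\hat\pi$ of \eqref{e.trankernel}, which satisfies the minorization \eqref{e.doeblin}; hence \eqref{e.conexpfast} applies uniformly over the starting state, so that with $\mu$ the unique $\hat\pi$-invariant measure we have $d_{\mathrm{TV}}(\hat\pi^{\,k}(x,\cdot),\mu)\les(1-\gamma)^k$ for all $x\in\Omega$. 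Conditioning on $X_{M_\eps\vee1}$ — the single possibly non-homogeneous step from $X_0$, governed by $\hat\pi_{0,1}$, has its own minorization since $I_{0,1}$ and $\Psi$ are bounded, so it is absorbed at the cost of one step — and applying this bound with $N_\eps-(M_\eps\vee1)$ steps gives
\[
\big|h_\eps(x)-\E_\mu[\cG_\eps]\big|\le 2\,\|\cG_\eps\|_{\infty}\,d_{\mathrm{TV}}\big(\hat\pi^{\,N_\eps-(M_\eps\vee1)}(x,\cdot),\mu\big)\les(1-\gamma)^{\,N_\eps-M_\eps}\longrightarrow0
\]
uniformly in $x$. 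Finally, integrating in $X_{M_\eps}$ and using $\E_\pi[h_\eps(X_{M_\eps})]=\E_\pi[\cG_\eps(X_{N_\eps})]=1$ forces $\E_\mu[\cG_\eps]\to1$, whence $\|h_\eps-1\|_\infty\to0$ and \eqref{e.rnde1} follows.

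I do not expect a genuine obstacle here: the core of the argument is the standard exponential-mixing estimate \eqref{e.conexpfast}. The only points deserving care are (i) the $\eps$-uniform boundedness of $\cG_\eps$, which is already recorded in the text, and (ii) the bookkeeping for the finitely many non-homogeneous increments at the two ends of the chain (the first governed by $\hat\pi_{0,1}$, the last by $\hat\pi_{N_\eps,N_\eps+1}$); these are harmless because $\cG_\eps$ depends on $X_{N_\eps}$ alone and all minorization constants are controlled by $\|I\|_{L^\infty}$.
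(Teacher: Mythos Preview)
Your proposal is correct and follows essentially the same approach as the paper: condition on $X_{M_\eps}$, use the Doeblin-based exponential mixing \eqref{e.conexpfast} together with the uniform boundedness of $\cG_\eps$ to get $\E_\pi[\cG_\eps(X_{N_\eps})\mid X_{M_\eps}]\to \E_\mu[\cG_\eps]$, and then use $\E_\pi[\cG_\eps(X_{N_\eps})]=1$ to conclude $\E_\mu[\cG_\eps]\to1$. Your treatment of the non-homogeneous endpoint steps is a bit more careful than the paper's, but the argument is the same.
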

\begin{proof}
First, we have 
\[
 | \E_\pi[F(B)\cG_\eps(X_{N_\eps})]-\E_\pi[F(B)]|=|\E_\pi[F(B)\E_\pi[\cG_\eps(X_{N_\eps})-1| X_{M_\eps}]]|
  \les \E_\pi[|\E_\pi[\cG_\eps(X_{N_\eps})| X_{M_\eps}]-1|].
\]
  Since $\cG_\eps$ is bounded, by \eqref{e.conexpfast}, we have 
  \[
  |\E_\pi[\cG_\eps(X_{N_\eps})| X_{M_\eps}]-\E_{\pi}[\cG_\eps(\tilde{X})]|\to0 \hbox{ as $\eps\to 0 $},
  \]
  uniformly in $X_{M_\eps}$. Here, $\tilde{X}$ is sampled from the invariant measure of $\hat{\pi}$. Since
  $\E_{\pi}[\cG_{\eps}(X_{N_\eps})]=1$, we know 
  that $\E_{\pi}[\cG_\eps(\tilde{X})]\to1$ as $\eps\to 0 $.
  Hence,
  \[
    \E_\pi[\cG_\eps(X_{N_\eps})| X_{M_\eps}]-1\to 0, \hbox{ as $\eps\to 0 $},
  \]
which completes the proof.
 \end{proof}

\begin{lemma}\label{l.bdsum4}
There exists $C>0$ independent of $\eps$ such that $\sum_j \sqrt{\E[|\cX_j^\eps|^4]}\leq C$.
\end{lemma}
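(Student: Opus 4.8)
The plan is to reduce the bound on $\sum_j\sqrt{\E[|\cX_j^\eps|^4]}$ to a pointwise fourth-moment estimate on the modified integrand $\tilde Z_t^\eps$, and then to obtain that estimate by repeating the computations of Section~\ref{s.pp} with four independent tilted Brownian motions in place of two.

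First I would use that $\tilde Z_t^\eps(r,y)$ is $(\F_r)$-adapted (it depends only on $\{dW(s,\cdot):s\in(r-\eps^{-\alpha},r]\}$) together with the Burkholder--Davis--Gundy inequality to write
\[
\E[|\cX_j^\eps|^4]=\frac{1}{\eps^{2d-4}}\,\E\Big[\Big|\int_{I_{\beta,j}}\int_{\R^d}\tilde Z_t^\eps(r,y)\,dW(r,y)\Big|^4\Big]\les \frac{1}{\eps^{2d-4}}\,\E\Big[\Big(\int_{I_{\beta,j}}\int_{\R^d}|\tilde Z_t^\eps(r,y)|^2\,dy\,dr\Big)^2\Big].
\]
Applying the Cauchy--Schwarz inequality once in $r$ over $I_{\beta,j}$ (whose length is $\eps^{-\beta}$) and once in the pair $(y,y')$, the right-hand side is at most $\eps^{-(2d-4)}|I_{\beta,j}|^2\sup_r h_\eps(r)^2$, where $h_\eps(r):=\int_{\R^d}\sqrt{\E[|\tilde Z_t^\eps(r,y)|^4]}\,dy$. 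Since the number of blocks $I_{\beta,j}$ is of order $\eps^{\beta-2}$, the lemma follows once I establish the uniform bound $\sup_r h_\eps(r)\les\eps^d$: indeed this gives $\E[|\cX_j^\eps|^4]\les\eps^{-(2d-4)}\eps^{-2\beta}\eps^{2d}=\eps^{2(2-\beta)}$, hence $\sum_j\sqrt{\E[|\cX_j^\eps|^4]}\les\eps^{\beta-2}\cdot\eps^{2-\beta}\les1$.

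To get $h_\eps(r)\les\eps^d$ I would prove $\E[|\tilde Z_t^\eps(r,y)|^4]\les\eps^{4d}\,(1\wedge|\eps y|^{-k})$ for every $k$, uniformly in $r$ and $\eps$; then $\sqrt{\cdot}\les\eps^{2d}(1\wedge|\eps y|^{-k/2})$, and substituting $z=\eps y$ yields $h_\eps(r)\les\eps^d$ for $k>2d$. The fourth-moment bound is obtained by running the computation of Lemma~\ref{l.varre} with four independent tilted Brownian motions $B^1,\dots,B^4$, and without the integrations in $r$ and $y$: expanding $|\tilde Z_t^\eps(r,y)|^4$ as a fourfold $\what\E_{B,t/\eps^2}$-integral over $x_1,\dots,x_4$ and taking the $W$-expectation of $\prod_{i=1}^4\exp\{\lambda\tilde M^\eps_{t,x_i,B^i}(r)-\frac12\lambda^2\langle\tilde M^\eps_{t,x_i,B^i}\rangle_r\}$ produces the factor $\exp\{\lambda^2\sum_{1\le i<j\le4}\langle\tilde M^\eps_{t,x_i,B^i},\tilde M^\eps_{t,x_j,B^j}\rangle_r\}$; the change of variables $x_i\mapsto\eps(x_i-B^i_{s_i}+y)$ contributes the Jacobian $\eps^{4d}$ and turns each cross bracket into an intersection term $\lambda^2\int\int R_\phi R_\psi(x_i-x_j+\Delta B^i-\Delta B^j)$, while $g$, $u_0$, $\psi$, $\phi$ remain bounded with $\psi$, $\phi$ compactly supported. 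Passing to the Markov-chain representation~\eqref{e.rerk1} (at the cost of the bounded factors $\cG_\eps$), I would control the six cross terms by Hölder's inequality, $\E_\pi[\exp\{\lambda^2\sum_{i<j}(\cdots)^{ij}\}]\le\prod_{i<j}\big(\E_\pi[\exp\{6\lambda^2(\cdots)^{ij}\}]\big)^{1/6}$, each factor being uniformly bounded by Corollary~\ref{c.intersection} for $\lambda$ small enough; retaining one factor $g(\eps x_1+\eps y-\eps B^1_{s_1})$ and estimating its second moment exactly as in the proof of Lemma~\ref{l.tailestimate} supplies the decay $1\wedge|\eps y|^{-k}$.

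The main obstacle is the control of the sixfold exponential, i.e.\ the four-replica analogue of the two-replica estimate behind Lemma~\ref{l.pointbd}; this is the only place where the smallness of $\lambda$ (equivalently of $\lambda_0$) is used here, the loss being the Hölder exponent $6=\binom42$ coming from the pairwise interactions of the four paths, so one must take $\lambda_0$ small enough that $6\lambda^2$ lies below the threshold in Corollary~\ref{c.intersection} (and likewise for the extra Cauchy--Schwarz used to split off the $g$-factor). Everything else is a routine repetition of the change-of-variables bookkeeping of Lemma~\ref{l.varre}, of the tail estimate of Lemma~\ref{l.tailestimate}, and of the boundedness of $\cG_\eps$, together with the observation that when $t-r\le\eps^2$ the modified integration domains of Remark~\ref{r.intedomain} only shrink the estimates.
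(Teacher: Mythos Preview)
Your proposal is correct and follows the paper's strategy: BDG on the stochastic integral, a four-replica version of the calculation in Lemma~\ref{l.varre} to extract the factor $\eps^{4d}$, the intersection bound of Corollary~\ref{c.intersection} (with a H\"older loss) for the six pairwise exponentials, and the tail estimate of Lemma~\ref{l.tailestimate} for the decay in $y$. The only cosmetic difference is that you first apply Cauchy--Schwarz in the $W$-expectation to reduce the mixed moment $\E[|\tilde Z_t^\eps(r,y)\tilde Z_t^\eps(r',y')|^2]$ to the pure fourth moment $\E[|\tilde Z_t^\eps(r,y)|^4]$ at a single $(r,y)$, whereas the paper estimates the mixed moment directly; both routes yield $\E[|\cX_j^\eps|^4]\les\eps^{2(2-\beta)}$ and hence $\sum_j\sqrt{\E[|\cX_j^\eps|^4]}\les 1$.
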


\begin{proof}
 Recall that 
\[
\cX_j^\eps=\frac{1}{\eps^{{d}/{2}-1}} \int_{I_{\beta,j}}\int_{\R^d}\tilde{Z}_t^\eps(r,y)dW(r,y),
\]
and by the martingale inequality, we have
\[
\E[|\cX_j^\eps|^4] \les \frac{1}{\eps^{2d-4}}\int_{I_{\beta,j}^2}\int_{\R^{2d}} \E[|\tilde{Z}_t^\eps(r,y)\tilde{Z}_t^\eps(r',y')|^2]dydy'drdr'.
\]
For $\E[|\tilde{Z}_t^\eps(r,y)\tilde{Z}_t^\eps(r',y')|^2]$, we repeat the calculation in the proof of Lemma~\ref{l.varre}. To simplify the notation, we let $r=r_1=r_2,r'=r_3=r_4$ and $y=y_1=y_2,y'=y_3=y_4$ and consider 
\[
\E\Big[\prod_{i=1}^4 \tilde{Z}_t^\eps(r_i,y_i)\Big]=\!
\int_{\R^{4d}}  \E\what{\E}_{B,{t}/{\eps^2}}\Big[\prod_{i=1}^4g(x_i) u_0(x_i+\eps B_{{t}/{\eps^2}}^i)\Phi^\eps_{t,x_i,B^i}(r_i,y_i)e^{\lambda \tilde{M}^\eps_{t,x_i,B^i}(r_i)-\frac12\lambda^2\la \tilde{M}^\eps_{t,x_i,B^i}\ra_{r_i}} \Big] d\bfx.
\]
As in the proof of Lemma~\ref{l.varre}, we obtain 
\[
\begin{aligned}
&\frac{1}{\eps^{2d-4}}\int_{I_{\beta,j}^2}\int_{\R^{2d}} \E[|\tilde{Z}_t^\eps(r,y)\tilde{Z}_t^\eps(r',y')|^2]dydy'drdr'\\
&\leq \int_{[0,t]^2}\int_{\R^{6d}}\int_{[0,1]^4} 1_{\{{r}/{\eps^2}\in I_{\beta,j}\}}1_{\{{r'}/{\eps^2}\in I_{\beta,j}\}}
\what{\E}_{B,{t}/{\eps^2}}[Ie^{J}]\prod_{i=1}^4\phi(s_i)\psi(x_i) d\bfs d\bfx dy dy' dr dr',
\end{aligned}
\]
where 
\[
I=\prod_{i=1}^4 |g(\eps x_i+y_i-\eps B_{{(t-r_i)}/{\eps^2}-s_i}^i)u_0(\eps x_i+y_i+\eps B_{{t}/{\eps^2}}^i-\eps B_{{(t-r_i)}/{\eps^2}-s_i}^i)|,
\]
\[
\begin{aligned}
J=&\lambda^2\sum_{1\leq i<l\leq 4}\int_{-1}^{1/2\eps^\alpha}\int_{-1}^{1/2\eps^\alpha}R_\phi(u_i,u_l)\\
&\times R_\psi(x_i-x_l+\frac{y_i-y_l}{\eps}+B_{\frac{t-r_i\wedge r_l}{\eps^2}+u_i}^i-B_{\frac{t-r_i}{\eps^2}-s_i}^i-B_{\frac{t-r_i\wedge r_l}{\eps^2}+u_l}^l+B_{\frac{t-r_l}{\eps^2}-s_l}^l)du_idu_l.
\end{aligned}
\]
By the same proof  {as that of}  \eqref{e.bdf}, we have 
\[
\begin{aligned}
\frac{1}{\eps^{2d-4}}\int_{I_{\beta,j}^2}\int_{\R^{2d}} \E[|\tilde{Z}_t^\eps(r,y)\tilde{Z}_t^\eps(r',y')|^2]dydy'drdr'\les \left(\int_0^t 1_{\{{r}/{\eps^2}\in I_{\beta,j}\}}dr\right)^2.
\end{aligned}
\]
The proof is complete.
\end{proof}

\end{document}